\documentclass[a4paper,bibliography=totoc,12pt]{scrartcl}

\usepackage[english]{babel}
\usepackage[utf8]{inputenc}
\usepackage[T1]{fontenc}
\usepackage{lmodern}
\usepackage{textcomp}
\usepackage{amsfonts}
\usepackage{mathtools}
\usepackage{amssymb}
\usepackage[thmmarks,hyperref,amsmath]{ntheorem}
\usepackage{needspace}
\usepackage{csquotes}
\usepackage{siunitx}
\usepackage[shortlabels]{enumitem}
\usepackage[pdftex]{hyperref}
\usepackage{url}
\usepackage{xcolor}
\usepackage{tabularray}
\UseTblrLibrary{amsmath}
\UseTblrLibrary{booktabs}

\usepackage[
	backend=biber,
	style=numeric,
	giveninits = true,
	sorting = nyt,
	maxnames = 8,
	sortlocale = en_US,
	doi=true,
	isbn=false,
	date=year
]{biblatex}
\AtBeginBibliography{\small}
\DeclareFieldFormat{doi}{%
	\newline
	\mkbibacro{DOI}\addcolon\space
	\ifhyperref
		{\href{http://dx.doi.org/#1}{\nolinkurl{#1}}}
		{\nolinkurl{#1}}}
\DeclareFieldFormat{url}{%
	\newline
	\mkbibacro{URL}\addcolon\space
	\ifhyperref
		{\href{#1}{#1}}
		{\nolinkurl{#1}}}

\addtokomafont{disposition}{\rmfamily}
\addtokomafont{title}{}

\setlist[description]{
	font={\rmfamily},  % set the label font
}
\setlist{itemsep=2pt}
\setlist{parsep=3pt}

\bibliography{bib_abrv.bib, degree_d_functions.bib}

\newcommand{\R}{\mathbb{R}}

\newcommand{\N}{\mathbb{N}}

\newcommand{\vek}[1]{\boldsymbol{#1}}

\DeclareMathOperator{\supp}{supp}

\DeclareMathOperator{\im}{im}

\DeclareMathOperator{\Der}{Der}
\DeclareMathOperator{\Res}{Res}

\theoremheaderfont{\bfseries}
\theorembodyfont{\upshape}
\theoremstyle{plain}
\theoremseparator{.}
\theoremsymbol{\ensuremath{\diamond}}
\newtheorem{theorem}{Theorem}
\newtheorem{corollary}[theorem]{Corollary}
\newtheorem{conjecture}[theorem]{Conjecture}
\newtheorem{lemma}{Lemma}[section]
\newtheorem{fact}[lemma]{Fact}

\newtheorem{definition}[lemma]{Definition}
\newtheorem{remark}[lemma]{Remark}

\newtheorem{example}[lemma]{Example}

\theoremheaderfont{\itshape}
\theorembodyfont{\upshape}
\theoremstyle{nonumberplain}
\theoremseparator{.}
\theoremsymbol{\rule{1.2ex}{1.2ex}}
\newtheorem{proof}{Proof}

\setcounter{MaxMatrixCols}{20}

\begin{document}
\title{The paired construction\\for Boolean functions on the slice}
\author{
Michael Kiermaier%
\thanks{
University of Bayreuth, Institute for Mathematics, 95440 Bayreuth, Germany
\newline
email:~\texttt{michael.kiermaier@uni-bayreuth.de}
\newline
homepage:~\url{https://mathe2.uni-bayreuth.de/michaelk/}
}
\and
Jonathan Mannaert%
\thanks{
Vrije Universiteit Brussel, Department of Mathematics and Data Science, Pleinlaan 2, B--1050 Brussels,
Belgium
\newline
email:~\texttt{Jonathan.Mannaert@vub.be}
}
\and
Alfred Wassermann%
\thanks{
University of Bayreuth, Institute for Mathematics, 95440 Bayreuth, Germany
\newline
email:~\texttt{alfred.wassermann@uni-bayreuth.de}
}
}
\maketitle

\begin{abstract}
	Let $V$ be a finite set of size $n$.
	We consider real functions on the \emph{slice} $\binom{V}{k}$, which are also known as functions in the Johnson scheme.
	For $I \subseteq J \subseteq V$, the characteristic function of the set of all $K\in\binom{V}{k}$ with $I \subseteq K \subseteq J$ is called \emph{basic}.
	In this article, we investigate a construction arising as the sum of two \enquote{opposite} basic functions.
	In essentially all cases, these \emph{paired} functions are Boolean.

	Our main result is the determination of the exact degree -- regarding a representation by an $n$-variable polynomial -- of all paired functions.
	The proof is elementary and does not involve any spectral methods.
	First, we settle the middle layer case $n=2k$ by identifying and combining various relations among the degrees involved.
	Then the general case is reduced to the middle layer situation by means of derived, reduced, and dual functions.

	Remarkably, in certain situations, the degree is strictly smaller than what is guaranteed by the elementary upper bound for the sum of functions.
	This makes paired functions good candidates for fixed-degree Boolean functions of small support size.
	As it turns out, for $n = 2k$ and even degree~$t \notin \{0,k\}$, paired functions provide the smallest known non-zero Boolean functions, surpassing the $t$-pencils, which is the smallest known construction in all other cases.
\end{abstract}

\paragraph{Keywords.}
Johnson scheme, Boolean function, degree, antidesign, design.

\section{Introduction}\label{sect:intro}

For a set $A$ and an integer $m$, the symbol $\binom{A}{m}$ denotes the set of all subsets of $A$ of size $m$.
The notation is suggested by the fact that $\#\binom{A}{m} = \binom{\#A}{m}$.
In the context of complexity theory, the set $\binom{A}{m}$ is known as a \emph{slice} (of the subset lattice of $A$).
The complement of $A$ in $V$ will be denoted by $A^\complement = V\setminus A$.
For a set $\mathcal{A}$ of sets we use the symbols
\[
	\bigcap \mathcal{A} = \bigcap_{A\in \mathcal{A}} A
	\qquad\text{and}\qquad
	\bigcup\mathcal{A} = \bigcup_{A\in \mathcal{A}} A\text{.}
\]

Throughout the article, we fix a set $V$ of finite size $n$ and a number $k\in\{0,\ldots,n\}$.
We consider functions $f : \binom{V}{k} \to \R$, which are known as (real) functions on the slice, or, when $2k \leq n$, as functions in the \emph{Johnson scheme} $J(n,k)$.
The function $f$ is called \emph{Boolean} if $\im(f) \subseteq \{0,1\}$.
As usual, the \emph{support} of $f$ is
\[
	\supp(f) = f^{-1}(\R \setminus\{0\}) = \{K\in\tbinom{V}{k} \mid f(a) \neq 0\}\text{,}
\]
and the characteristic function of a subset $\mathcal{K}\subseteq\binom{V}{k}$ is denoted by
\[
	\chi_{\mathcal{K}} : \binom{V}{k} \to \R\text{,}\quad K \mapsto \begin{cases} 1 & \text{if }K\in \mathcal{K}\text{,}\\0 & \text{if }K\notin\mathcal{K}\text{.}\end{cases}
\]
The mappings $f \mapsto \supp(f)$ and $\mathcal{K} \mapsto \chi_{\mathcal{K}}$ form an inverse pair of bijective functions between the set of all Boolean functions on $\binom{V}{k}$ and the set of all subsets of $\binom{V}{k}$, allowing us to silently identify these two kinds of objects with each other.
The elements of $\mathcal{K}$ will be called \emph{blocks}.

The zero function and the all-one function $\binom{V}{k} \to \R$ will be denoted by $\vek{0}_{\binom{V}{k}} = \vek{0}$ and $\vek{1}_{\binom{V}{k}} = \vek{1}$, respectively.
Clearly, these two functions are Boolean with $\vek{0} = \chi_{\emptyset}$ and $\vek{1} = \chi_{\binom{V}{k}}$.
The \emph{size} of $f$ is
\[
	\#f = \sum_{K\in\binom{V}{k}} f(K)\text{,}
\]
motivated by $\#f = \#\supp(f)$ for Boolean functions $f$.

For $I \subseteq J\subseteq V$, we define the \emph{basic set}
\[
	\mathcal{F}^{(V,k)}_{\!I,J} = \mathcal{F}_{\!I,J} = \{K\in\tbinom{V}{k} \mid I \subseteq K \subseteq J\}\text{.}
\]
Its characteristic function, denoted by
\[
	f^{(V,k)}_{I,J} = f_{I,J} = \chi_{\mathcal{F}_{I,J}}\text{,}
\]
will be called \emph{basic}, too.
The case $J=V$ is known as a \emph{pencil} or \emph{$(\#I)$-pencil} focussed at $A$.
Following~\cite{Kiermaier-Mannaert-Wassermann-2025-JCTSA212:P105979} in the case $q=1$, we define the \emph{degree} $\deg_V(f) = \deg(f)$ of a non-zero function $f : \binom{V}{k}\to\R$ as the smallest number $t$ such that $f$ is an $\R$-linear combination of $t$-pencils.
The degree of the zero function is set to $\deg(\vek{0}) = -\infty$.
It is known that $\deg(f) \leq \min(k,n-k)$, and we have $\deg(f) = 0$ if and only if $f \neq \vek{0}$ is constant.

\newpage
We will need the following results on the degree:
\begin{fact}[{{\cite[Lem.~4.3]{Kiermaier-Mannaert-Wassermann-2025-JCTSA212:P105979}}}]\label{fct:deg_linear_combination}
	Let $f,g : \binom{V}{k} \to \R$ and $\lambda\in \R$.
	Then
	\begin{enumerate}[(a)]
		\item\label{fct:deg_linear_combination:scalar_mul} $\deg(\lambda f) = \begin{cases} -\infty & \text{if }\lambda = 0\text{,}\\\deg(f) & \text{otherwise.}\end{cases}$
		\item\label{fct:deg_linear_combination:sum} $\deg(f \pm g) \leq \max(\deg(f), \deg(g))$, with equality whenever $\deg(f) \neq \deg(g)$.
	\end{enumerate}
\end{fact}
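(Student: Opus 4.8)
The whole statement rests on one structural fact about the filtration of $\R^{\binom{V}{k}}$ by degree, so my plan is to isolate that first and then read off both items by pure linear algebra. For $0\le t\le k$ write $W_t$ for the $\R$-subspace spanned by all $t$-pencils $f_{I,V}$ with $\#I = t$; by definition $\deg(f)$ is the least $t$ with $f\in W_t$ (and $\deg(\vek{0}) = -\infty$). This least $t$ exists because the $k$-pencils are precisely the indicators $\chi_{\{I\}}$ of single blocks $I\in\binom{V}{k}$ and hence already span all of $\R^{\binom{V}{k}}$ (equivalently, by the stated bound $\deg(f)\le\min(k,n-k)\le k$). The crux is the monotonicity $W_t\subseteq W_{t+1}$ for every $t<k$. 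To prove it I would fix a $t$-set $I\subseteq V$ and count, at each $K\in\binom{V}{k}$, how many of the enlarged pencils $f_{I\cup\{x\},V}$ with $x\in V\setminus I$ evaluate to $1$:
\[
	\sum_{x\in V\setminus I} f_{I\cup\{x\},V}(K)
	= \#\{x\in V\setminus I \mid I\cup\{x\}\subseteq K\}
	= (k-t)\,f_{I,V}(K)\text{,}
\]
because $I\cup\{x\}\subseteq K$ forces $I\subseteq K$, and when $I\subseteq K$ there are exactly $\#(K\setminus I)=k-t$ admissible $x$. Since $t<k$ I may divide by $k-t$, which expresses every $t$-pencil as an $\R$-linear combination of $(t+1)$-pencils and yields $W_t\subseteq W_{t+1}$. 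Consequently the membership sets are nested upward: for $f\ne\vek{0}$ one has $f\in W_t$ if and only if $t\ge\deg(f)$.

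Part~(a) is then immediate. If $\lambda = 0$ then $\lambda f = \vek{0}$, so $\deg(\lambda f) = -\infty$. If $\lambda\ne 0$, then because each $W_t$ is a vector space we have $f\in W_t \iff \lambda f\in W_t$; thus $f$ and $\lambda f$ belong to exactly the same spaces $W_t$ and therefore have the same least such index, i.e.\ $\deg(\lambda f) = \deg(f)$.

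For part~(b) set $d_f = \deg(f)$, $d_g = \deg(g)$ and $m = \max(d_f, d_g)$. If one argument is $\vek{0}$, say $g = \vek{0}$, then $f\pm g = f$ and both assertions hold trivially under the convention $\max(d,-\infty) = d$; so assume $f,g\ne\vek{0}$. By the upward nesting, $f\in W_{d_f}\subseteq W_m$ and $g\in W_{d_g}\subseteq W_m$, and since $W_m$ is closed under $\pm$ we get $f\pm g\in W_m$, i.e.\ $\deg(f\pm g)\le m$. For the equality when $d_f\ne d_g$, assume without loss of generality $d_f > d_g$, so $m = d_f\ge 1$ and $d_g\le d_f-1$. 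Were $\deg(f\pm g)\le d_f-1$, i.e.\ $f\pm g\in W_{d_f-1}$, then since also $g\in W_{d_g}\subseteq W_{d_f-1}$ the subspace $W_{d_f-1}$ would contain $f = (f\pm g)\mp g$, contradicting $f\notin W_{d_f-1}$ (which is what $\deg(f) = d_f$ means). Hence $\deg(f\pm g) = d_f = m$.

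The only step that involves an actual computation is the pencil-averaging identity behind $W_t\subseteq W_{t+1}$; everything else is the vector-space arithmetic of a filtration. Accordingly I do not anticipate a genuine obstacle, only the need to track the $-\infty$ convention carefully through the degenerate cases where $f$, $g$, or $\lambda f$ vanishes.
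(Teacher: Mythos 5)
Your proof is correct, but note that there is nothing in this paper to compare it against: the statement is imported as a Fact, cited from \cite[Lem.~4.3]{Kiermaier-Mannaert-Wassermann-2025-JCTSA212:P105979}, and no proof of it appears here. Judged on its own merits, your argument is sound and self-contained. The one genuinely substantive ingredient --- the nesting $W_t \subseteq W_{t+1}$ of the spans of $t$-pencils, obtained from the averaging identity $\sum_{x\in V\setminus I} f_{I\cup\{x\},V} = (k-t)\, f_{I,V}$ --- is exactly the right structural input, and it makes the definition of degree behave like a filtration, after which parts~\ref{fct:deg_linear_combination:scalar_mul} and~\ref{fct:deg_linear_combination:sum} are pure linear algebra; your handling of the $-\infty$ conventions (the cases $f=\vek{0}$, $g=\vek{0}$, and $f\pm g=\vek{0}$, the last of which still works because $\vek{0}\in W_{d_f-1}$) is complete. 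It is worth pointing out that your key identity is precisely the $J=\emptyset$ case of Equation~\eqref{eq:lem:monotonicity:1} in the proof of Lemma~\ref{lem:monotonicity}, where the paper uses the same pencil-averaging device for a different purpose (monotonicity of the degrees $t^{(2k,k)}_{i,j}$); so your route is consistent in spirit with the paper's toolkit, even though the fact itself is proved in the predecessor paper rather than here.
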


\begin{fact}[{{\cite[Th.~6.7]{Kiermaier-Mannaert-Wassermann-2025-JCTSA212:P105979}}}]\label{fct:basic_deg}
Let $I \subseteq J \subseteq V$ and define $i = \#I$ and $j = \#J$.
Then
\[
		\deg \mathcal{F}^{(V,k)}_{\!I,J}
		= \begin{cases}
			-\infty & \text{if } i > k\text{ or } j < k\text{,} \\
			\min(i+(n-j),k,n-k) & \text{otherwise.}
		\end{cases}
\]
\end{fact}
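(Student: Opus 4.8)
The plan is to establish the two halves of the formula separately, using the reformulation that $K\in\mathcal F_{I,J}$ exactly when $I\subseteq K$ and $K\cap J^\complement=\emptyset$. If $i>k$ or $j<k$, then no $k$-set meets both conditions, so $\mathcal F_{I,J}=\emptyset$ and $f_{I,J}=\vek 0$ has degree $-\infty$; from now on assume $i\le k\le j$ and set $t=\min(i+(n-j),k,n-k)$. For the upper bound I would expand the forbidden part by inclusion--exclusion. Writing $x_v$ for the indicator $[v\in K]$, one has $f_{I,J}=\prod_{u\in I}x_u\prod_{v\in J^\complement}(1-x_v)=\sum_{S\subseteq J^\complement}(-1)^{\#S}f_{I\cup S,V}$, a linear combination of pencils whose sizes range over $i,\dots,i+(n-j)$ (the terms with $\#(I\cup S)>k$ are the zero function and disappear). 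Hence $\deg f_{I,J}\le i+(n-j)$, and combining this with the known bound $\deg\le\min(k,n-k)$ yields $\deg f_{I,J}\le t$.

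The harder direction is $\deg f_{I,J}\ge t$, that is, $f_{I,J}\notin U_{t-1}$, where $U_s$ denotes the space spanned by all pencils of size $\le s$ (the functions of degree $\le s$). I would first reduce to the case $k\le n-k$ by complementation: the bijection $K\mapsto K^\complement$ turns $f_{I,J}^{(V,k)}$ into $f_{J^\complement,I^\complement}^{(V,n-k)}$ and preserves degree, the latter following by applying the just-proved upper bound on both slices (the complement of a size-$\le s$ pencil is a basic function of degree $\le s$, so complementation maps $U_s$ into $U_s$ in both directions). Then $t=\min(i+(n-j),k)$ and $2t\le n$. The strategy is to produce a single test function $g$ with $g\perp U_{t-1}$ but $\langle f_{I,J},g\rangle\ne0$; as $f_{I,J}\in U_{t-1}$ would force $\langle f_{I,J},g\rangle=0$, this certifies $\deg f_{I,J}\ge t$. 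For $g$ I would take the restriction to the slice of a product of $t$ differences, $g=\prod_{l=1}^{t}(x_{a_l}-x_{c_l})$, on $2t$ distinct points $a_1,c_1,\dots,a_t,c_t$ still to be chosen.

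Two points then need checking. The orthogonality $g\perp U_{t-1}$ I expect to follow cleanly from symmetry: if $\tau_l$ is the transposition of $a_l$ and $c_l$, then $\tau_l g=-g$ while $\tau_l$ preserves the uniform inner product on the slice; and for any pencil $f_{T,V}$ with $\#T\le t-1$, fewer than $t$ points cannot meet all $t$ disjoint pairs in exactly one point, so some pair lies wholly inside or outside $T$, making $f_{T,V}$ fixed by the corresponding $\tau_l$ and hence orthogonal to $g$. The second point, which I expect to be the main obstacle, is to choose the pairs so that $\langle f_{I,J},g\rangle\ne0$. Partitioning $V=I\sqcup F\sqcup J^\complement$ with $F=J\setminus I$ and evaluating on a block $K=I\cup S$ with $S\in\binom{F}{k-i}$, a pair with one endpoint in $I$ and one in $J^\complement$ contributes the factor $1$, a pair $(u,c)$ with $u\in I,\ c\in F$ contributes $1-[c\in S]$, and a pair $(a,v)$ with $a\in F,\ v\in J^\complement$ contributes $[a\in S]$. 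Using only these three kinds of pairs with multiplicities $\alpha,\beta,\gamma$ collapses the pairing to a single binomial coefficient, $\langle f_{I,J},g\rangle=\binom{(j-i)-\beta-\gamma}{(k-i)-\gamma}$.

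It then remains to solve an elementary feasibility problem: to find integers $\alpha,\beta,\gamma\ge0$ with $\alpha+\beta+\gamma=t$ that respect the supplies $\alpha+\beta\le i$, $\alpha+\gamma\le n-j$, $\beta+\gamma\le j-i$ and additionally satisfy $\gamma\le k-i$ and $\beta\le j-k$, which is exactly what makes the binomial coefficient nonzero. I expect this bookkeeping to be where the three branches of the minimum resurface: in the regime $i+(n-j)\le k$ the choice $\alpha=0,\ \beta=i,\ \gamma=n-j$ works, while in the capped regime $i+(n-j)>k$ one is forced to set $\gamma=k-i$ and to pick $\beta$ in the interval $[\max(0,k-(n-j)),\ \min(i,j-k)]$, whose nonemptiness turns out to be equivalent precisely to the standing hypotheses $i+(n-j)\ge k$ and $2k\le n$. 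Confirming that an admissible triple exists in every case (equivalently, that the chosen $g$ does not vanish identically on $\mathcal F_{I,J}$) is the technical heart of the argument; once it is secured, the lower bound matches the upper bound and $\deg f_{I,J}=\min(i+(n-j),k,n-k)$.
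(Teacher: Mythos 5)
Your proposal is correct, but there is nothing in this paper to compare it against: the statement is imported as a \emph{Fact}, cited from \cite[Th.~6.7]{Kiermaier-Mannaert-Wassermann-2025-JCTSA212:P105979}, and no proof of it appears here. Taken on its own merits, your argument is sound and complete. The upper bound via the expansion $f_{I,J}=\sum_{S\subseteq J^\complement}(-1)^{\#S}f_{I\cup S,V}$, combined with the standing bound $\deg\leq\min(k,n-k)$, is exactly right, and your by-hand derivation of $\deg(f^\perp)=\deg(f)$ (duals of $s$-pencils lie in $U_s$ by the same expansion) lets you avoid even citing the paper's Fact~\ref{fct:dual_deg}. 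The lower bound is the real content, and your certificate works: the product of differences $g=\prod_{l=1}^{t}(x_{a_l}-x_{c_l})$ on $t$ disjoint pairs is orthogonal to every pencil on at most $t-1$ points by the transposition/sign argument (some pair avoids the focus, so the corresponding swap fixes the pencil and negates $g$), and your three-type pairing reduces $\langle f_{I,J},g\rangle$ to the single binomial coefficient $\binom{(j-i)-\beta-\gamma}{(k-i)-\gamma}$ with no cancellation, since every factor is nonnegative on $\mathcal{F}_{I,J}$. I checked the feasibility bookkeeping in both regimes after the reduction to $2k\leq n$: for $i+(n-j)\leq k$ the choice $(\alpha,\beta,\gamma)=(0,i,n-j)$ satisfies all supply constraints (the inequality $\beta+\gamma\leq j-i$ follows from $i+(n-j)\leq k\leq n/2$), and for $i+(n-j)\geq k$ the window $\max(0,k+j-n)\leq\beta\leq\min(i,j-k)$ is nonempty precisely under the standing hypotheses, with the binomial collapsing to $1$. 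This is in spirit an antidesign/test-function certificate of the kind underlying the degree--design duality the paper recalls in Section~\ref{sec:prelim}, but executed in an entirely elementary way, with no spectral or association-scheme machinery; the only external ingredient is the bound $\deg\leq\min(k,n-k)$, which the paper states as known.
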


Each multivariate polynomial $\alpha \in \R[X_a \mid a\in V]$ represents a function $f_{\alpha} : \binom{V}{k} \to \R$, where the value $f_{\alpha}(K)$ is given by the evaluation of $\alpha$ at $X_a = 1$ if $a\in K$ and $X_a = 0$ if $a\notin K$, for all $a\in V$.
As an example, $f^{(V,k)}_{\!I,J}$ is represented by
\[
	\prod_{a\in I} X_a\; \cdot\; \prod_{b\in J^\complement}(1-X_b)\text{.}
\]
The evaluation map provides a surjective ring homomorphism $\R[X_a \mid a\in V] \to \R^{\binom{V}{k}}$.
Hence, denoting its kernel -- i.e. the set of all polynomials in $\R[X_a \mid a\in V]$ representing $\vek{0}$ -- by $\mathcal{I}$, the set of all functions $\binom{V}{k} \to \R$ can be identified with the quotient ring $\R[X_a \mid a\in V] / \mathcal{I}$.

\begin{fact}
	Let $f : \binom{V}{k} \to \R$.
	Then $\deg_V(f)$ equals the minimum degree of a polynomial $\alpha\in \R[X_a \mid a\in V]$ representing $f$, i.\,e.\ with $f = f_\alpha$.
\end{fact}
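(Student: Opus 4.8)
The plan is to prove two inequalities separately, writing $d$ for the minimum degree of a polynomial representing $f$ and comparing it with $\deg_V(f)$; the claim is that $d = \deg_V(f)$. I would first dispose of the case $f = \vek{0}$, where the zero polynomial represents $f$ with degree $-\infty = \deg(\vek{0})$, which is minimal, so that henceforth I may assume $f \neq \vek{0}$.

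For the inequality $d \leq \deg_V(f)$ I would argue directly from the definition of $\deg_V$. If $\deg_V(f) = t$, then $f$ is an $\R$-linear combination of $t$-pencils $f_{A,V}$ with $\#A = t$, and, as recorded in the excerpt, each such pencil is represented by the squarefree monomial $\prod_{a \in A} X_a$ of degree $t$. The same linear combination of these monomials then represents $f$ and has degree at most $t$, so $d \leq t = \deg_V(f)$. Combined with the known bound $\deg_V(f) \leq \min(k, n-k) \leq k$, this already secures $d \leq k$, which I will need below.

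For the reverse inequality $\deg_V(f) \leq d$ I would start from a representing polynomial $\alpha$ of degree $d$, reduce it modulo the relations $X_a^2 - X_a \in \mathcal{I}$ to a multilinear polynomial $\beta$ of degree at most $d$ with $f_\beta = f$, and write $\beta = \sum_A c_A \prod_{a \in A} X_a$ with $\#A \leq d$. Each monomial represents the $(\#A)$-pencil $f_{A,V}$, so $f$ is an $\R$-linear combination of pencils of sizes at most $d$. The crucial step — and the part I expect to carry the real content of the proof — is to raise all these pencils to the common size $d$, using the slice identity $\sum_{a \in V} X_a \equiv k \pmod{\mathcal{I}}$. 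Concretely, for $\#A = s < d \leq k$ one has
\[
	\Bigl(\sum_{b \in V \setminus A} X_b\Bigr)\prod_{a \in A} X_a \equiv (k - s)\prod_{a \in A} X_a \pmod{\mathcal{I}}\text{,}
\]
since $\sum_{b \in V} X_b \equiv k$ and $X_b \prod_{a \in A} X_a \equiv \prod_{a \in A} X_a$ for $b \in A$. As $k - s \geq 1$, this rewrites the $s$-pencil $f_{A,V}$ as an $\R$-linear combination of the $(s+1)$-pencils $f_{A \cup \{b\}, V}$ with $b \notin A$. Iterating until size $d$ is reached — every intermediate size staying $\leq d \leq k$, so that no denominator vanishes — turns $f$ into an $\R$-linear combination of $d$-pencils, giving $\deg_V(f) \leq d$.

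The two inequalities together yield $\deg_V(f) = d$. I expect the raising step to be the only genuine obstacle: the definition of $\deg_V$ demands pencils of a single common size, whereas a representing polynomial mixes monomials of various degrees, and it is exactly the slice relation $\sum_{a \in V} X_a \equiv k$ that closes this gap. Securing $d \leq k$ in advance — via the easy direction together with the bound $\deg_V(f) \leq \min(k,n-k)$ — is what guarantees that the factor $k - s$ never vanishes throughout the iteration.
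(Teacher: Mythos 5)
Your proof is correct, and there is in fact nothing in the paper to compare it against: the statement is recorded as an unproved Fact (its neighbours are quoted from the authors' 2025 JCTA paper), so your argument supplies a proof that the paper omits. Both directions check out. The easy inequality $d \leq \deg_V(f)$ is right, since a $t$-pencil $f_{A,V}$ is represented by the monomial $\prod_{a\in A} X_a$. For the converse, your chain --- reduce modulo $X_a^2 - X_a \in \mathcal{I}$ to a multilinear $\beta$ of degree at most $d$, read off its monomials as pencils of sizes at most $d$, then raise every pencil to the common size $d$ --- is exactly what the definition of $\deg_V$ requires, and your raising identity $(k-s)\,f_{A,V} = \sum_{b\in V\setminus A} f_{A\cup\{b\},V}$ is valid on the slice; it is precisely the $J = \emptyset$ case of Equation~\eqref{eq:lem:monotonicity:1}, which the paper itself establishes inside the proof of Lemma~\ref{lem:monotonicity}, so your argument is very much in the spirit of the paper's own toolkit. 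Your attention to the denominators is also well placed: intermediate sizes stay strictly below $d \leq k$, so the factors $k-s$ never vanish. The only mild external dependence is that you secure $d \leq k$ via the bound $\deg_V(f) \leq \min(k,n-k)$; since the paper states that bound as known immediately before this Fact, this is legitimate, but if you want the argument fully self-contained you can avoid it altogether by observing that any multilinear monomial $\prod_{a\in A}X_a$ with $\#A > k$ represents $\vek{0}$ on $\binom{V}{k}$ and may be discarded, which caps the minimal representing degree at $k$ directly.
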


This article investigates paired functions as defined below.

\begin{definition}\label{def:paired}
	Let $I,J \subseteq V$ be disjoint sets.
	We define the \emph{paired} function
	\[
		p^{(V,k)}_{I,J} = p_{I,J} = f_{I,J^\complement} + f_{J,I^\complement}\text{.}
	\]
	The functions $f_{I,J^\complement}$ and $f_{J,I^\complement}$ will be called the \emph{legs} of $p_{I,J}$.%
	\footnote{
		We note that the term \emph{leg} depends on the representation of a paired function as $p_{I,J}$, which may not be unique.
		In many cases, however, the set $\{I,J\}$ is indeed uniquely determined, as we will see in Theorem~\ref{thm:paired_determines_I_J}.
	}
	Moreover, let%
	\footnote{Warning: The border case $p_{\emptyset,\emptyset}$ is non-Boolean and hence $p_{\emptyset,\emptyset} \neq \chi_{\mathcal{P}_{\emptyset,\emptyset}}$, see the discussion in Section~\ref{sec:elem_paired}.}
	\[
	\mathcal{P}_{I,J}^{(V,k)}
	= \mathcal{P}_{I,J}
	= \supp(p_{I,J})
	= \mathcal{F}_{\!I,J^\complement} \cup \mathcal{F}_{\!J,I^\complement}\text{.}
	\]
\end{definition}

The above results immediately give the subsequent bound on the degree of paired functions.

\begin{lemma}[Elementary bound]\label{lem:elem_bound}
	Let $I,J\subseteq V$ be disjoint of size $i = \#I$ and $j = \#J$.
	Then
	\[
		\deg(p_{I,J}) \leq \min(i+j,k,n-k)\text{.}
	\]
\end{lemma}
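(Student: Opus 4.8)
The plan is to bound the degree of the sum $p_{I,J} = f_{I,J^\complement} + f_{J,I^\complement}$ by the larger of the degrees of its two legs, and then to evaluate each leg's degree with the explicit formula from Fact~\ref{fct:basic_deg}. First I would observe that the disjointness of $I$ and $J$ guarantees $I \subseteq J^\complement$ and $J \subseteq I^\complement$, so both legs $f_{I,J^\complement}$ and $f_{J,I^\complement}$ are genuine basic functions and Fact~\ref{fct:basic_deg} is applicable to them.

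Next I would apply Fact~\ref{fct:basic_deg} to the first leg $f_{I,J^\complement}$. Here the lower set has size $\#I = i$ and the upper set has size $\#J^\complement = n - j$, so the quantity ``$i + (n - j)$'' appearing in the formula becomes $i + \bigl(n - (n - j)\bigr) = i + j$. Hence $\deg f_{I,J^\complement}$ is either $-\infty$, in the degenerate cases $i > k$ or $n - j < k$, or else equals $\min(i + j, k, n - k)$; in every case it is at most $\min(i + j, k, n - k)$. By the symmetric computation, interchanging the roles of $I$ and $J$, the same bound holds for the second leg $f_{J,I^\complement}$, since its parameters $j$ and $n - i$ yield the same expression $j + \bigl(n - (n - i)\bigr) = i + j$.

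Finally, I would invoke the subadditivity bound, part~\ref{fct:deg_linear_combination:sum} of Fact~\ref{fct:deg_linear_combination}, which gives $\deg(f + g) \leq \max(\deg f, \deg g)$. Combining this with the two leg bounds yields
\[
	\deg(p_{I,J}) \leq \max\bigl(\deg f_{I,J^\complement},\, \deg f_{J,I^\complement}\bigr) \leq \min(i + j, k, n - k)\text{,}
\]
which is the claim.

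I do not expect any genuine obstacle here. The only points requiring care are verifying that disjointness makes both legs well-defined basic functions, and correctly tracking the complemented set sizes so that the argument of the formula collapses to $i + j$ in both legs. The inequality is elementary precisely because subadditivity of degree under sums immediately reduces the problem to the known degree of a single basic function; this is also why, as the abstract foreshadows, the interest lies in the cases where the true degree drops strictly below this bound.
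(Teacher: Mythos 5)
Your proof is correct and is essentially the paper's own argument: bound each leg via Fact~\ref{fct:basic_deg} (where the parameter $i+(n-j)$ indeed collapses to $i+j$ for both legs) and then apply the subadditivity of degree from Fact~\ref{fct:deg_linear_combination}\ref{fct:deg_linear_combination:sum}. The paper states this in two lines; you have merely spelled out the same computation in full detail.
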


\begin{proof}
By Fact~\ref{fct:basic_deg}, both legs are of degree at most $\min(i+j,k,n-k)$.
Now the statement follows from Fact~\ref{fct:deg_linear_combination}.
\end{proof}

This article is motivated by the fact that -- as it turns out -- the elementary bound it is not always sharp.
Our main result is the following determination of the exact degree of a paired function.

\begin{theorem}\label{thm:paired_deg}
	Let $I,J\subseteq V$ be disjoint of size $i = \#I$ and $j = \#J$.
	Then
	\[
		\deg p^{(V,k)}_{I,J}
		= \begin{cases}
			i + j - 1 & \text{if }i+j\text{ odd and }i+j \leq \min(k,n-k)\text{,} \\
			k - 1 & \text{if }k\text{ odd and }n=2k\text{ and }i+j \geq k\text{,} \\
			\min(i+j,k,n-k) & \text{otherwise.}
		\end{cases}
	\]
\end{theorem}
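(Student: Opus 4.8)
The plan is to prove matching upper and lower bounds. The upper bound $\deg p_{I,J}\le\min(i+j,k,n-k)$ is the elementary bound of Lemma~\ref{lem:elem_bound}; the two exceptional branches will require shaving off one unit. All lower bounds I produce by degree-non-increasing operations: passing from $f$ to its \emph{reduced} function $f^x(K)=f(K)$ on $\binom{V\setminus\{x\}}{k}$ (realised on polynomials by $X_x\mapsto 0$), or to its \emph{derived} function $f_x(K)=f(K\cup\{x\})$ on $\binom{V\setminus\{x\}}{k-1}$ (realised by $X_x\mapsto 1$), never raises the polynomial degree, so $\deg f\ge\deg f^x$ and $\deg f\ge\deg f_x$. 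Moreover the \emph{dual} $f(K)\mapsto f(K^\complement)$ is a degree-preserving bijection between the $k$- and $(n-k)$-slice that carries $p^{(V,k)}_{I,J}$ to $p^{(V,n-k)}_{I,J}$; since the claimed formula is invariant under $k\leftrightarrow n-k$, I may assume $2k\le n$ throughout, so that $\min(k,n-k)=k$. Finally, if $\max(i,j)>k$ one leg vanishes identically and $p_{I,J}$ degenerates to a single basic function whose degree is read off from Fact~\ref{fct:basic_deg}, matching the \enquote{otherwise} branch; hence I may assume $\max(i,j)\le k$, and in particular $i+j\le 2k$.

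I would settle the middle layer $n=2k$ first. Here complementation $K\mapsto K^\complement$ is an involution of $\binom{V}{k}$ that swaps the two legs, so $p_{I,J}$ is fixed by it. The crucial relation is a parity statement: a nonzero $f$ with $f(K^\complement)=\varepsilon f(K)$, $\varepsilon\in\{\pm1\}$, has $(-1)^{\deg f}=\varepsilon$. To see this elementarily, take a representative $\alpha$ of minimal degree $d$; then $\alpha(1-X_a\mid a\in V)$ represents $f(K^\complement)=\varepsilon f(K)$, so $\tfrac12(\alpha+\varepsilon\,\alpha(1-X_a\mid a\in V))$ again represents $f$, and its degree-$d$ part equals $\tfrac12\bigl(1+\varepsilon(-1)^d\bigr)\alpha_d$. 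If $\varepsilon(-1)^d=-1$ this vanishes, giving a representative of degree $<d$ and a contradiction; hence $(-1)^d=\varepsilon$. Thus $\deg p_{I,J}$ is even. Combining even parity with the elementary bound $\min(i+j,k)$ yields $\deg p_{I,J}\le 2\lfloor\min(i+j,k)/2\rfloor$. For the matching lower bound, reducing at a point $x\in I$ annihilates the leg $f_{I,J^\complement}$ and leaves a single basic function with lower set of size $j$ and upper set of co-size $i-1$, of degree $\min(i+j-1,k-1)$ by Fact~\ref{fct:basic_deg}; together with even parity this forces $\deg p_{I,J}\ge 2\lfloor\min(i+j,k)/2\rfloor$. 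Hence in the middle layer the degree is the largest even integer $\le\min(i+j,k)$, which is exactly the second and third branches of the theorem for $n=2k$.

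It remains to treat $2k<n$. For the upper bound of the first branch, when $i+j$ is odd I expand the explicit representative $\prod_{a\in I}X_a\prod_{b\in J}(1-X_b)+\prod_{b\in J}X_b\prod_{a\in I}(1-X_a)$: its top homogeneous part is $\bigl((-1)^i+(-1)^j\bigr)\prod_{a\in I\cup J}X_a=0$, so $\deg p_{I,J}\le i+j-1$. For the lower bounds I again reduce at $x\in I$ (using $p_{I,J}=p_{J,I}$ to ensure $I\ne\emptyset$), obtaining a single basic function of degree $\min(i+j-1,k,n-1-k)$ by Fact~\ref{fct:basic_deg}. When $i+j\le\min(k,n-k)$ this equals $i+j-1$, completing the first branch; when $i+j\ge k+1$ (so we are in the \enquote{otherwise} branch) and $n\ge 2k+1$ it equals $k$, completing that branch. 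The only remaining instances are $i+j\le k$ with $i+j$ even: here $i+j\le 2k$ leaves enough points outside $I\cup J$, and repeatedly reducing at such points descends to the middle layer on $\binom{V''}{k}$ with $\#V''=2k$ and the same $I,J$; the middle-layer value just computed is the largest even integer $\le\min(i+j,k)$, which equals $\min(i+j,k)$ since it is even, supplying the required lower bound.

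The main obstacle is the middle-layer parity relation together with the recognition that it is the \emph{sole} source of a degree drop: off the middle layer ($n>2k$) the reduced leg already attains the full degree $k$, whereas exactly at $n=2k$ its degree falls to $k-1$, matching the parity-forced drop. Verifying that the reduced-leg lower bound is short by one precisely in the even cases, and that the even-parity constraint contributes the missing unit, is the delicate point; everything else is bookkeeping with Facts~\ref{fct:deg_linear_combination} and~\ref{fct:basic_deg} and the degree-monotonicity of reduction, derivation, and duality.
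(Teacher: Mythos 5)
Your proposal is correct, and while it shares the paper's overall skeleton---settle the middle layer $n=2k$ first, descend from $n>2k$ by reduced (residual) functions, dualize for $n<2k$, and use the top-degree cancellation $\bigl((-1)^i+(-1)^j\bigr)\prod_{a\in I\cup J}X_a$ for odd $i+j$ (the paper's Lemma~\ref{lem:subdiag_odd_upper_bound})---the engine you use for the middle layer is genuinely different. The paper proves a monotonicity lemma (Lemma~\ref{lem:monotonicity}, via an averaging identity over supersets $\bar{I}\supseteq I$, which needs $n=2k$ so that both legs acquire the same coefficient $k-i$), combines it with the Pascal triad (Lemma~\ref{lem:t_triad}) to get constancy along diagonals (Lemma~\ref{lem:n_middle_diag_const}), and finally identifies the superdiagonal functions with $\{K,K^\complement\}$ (Lemma~\ref{lem:n_middle_superdiag}). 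You instead prove a complementation-parity lemma---a nonzero middle-layer function with $f(K^\complement)=\varepsilon f(K)$ has $(-1)^{\deg f}=\varepsilon$, shown by symmetrizing a minimal-degree representative under $X_a\mapsto 1-X_a$---which forces $\deg p_{I,J}$ to be even, and then you squeeze between the elementary upper bound and the lower bound $\min(i+j-1,k-1)$ coming from the residual at a point $x\in I$, which kills one leg and leaves a basic function of known degree. Your parity lemma is an elementary rendering of the fact that complementation acts as $(-1)^\ell$ on the $\ell$-th eigenspace of the middle-layer Johnson scheme, and it pays a second dividend at $n>2k$, $i+j\geq k+1$: there your residual-at-$x\in I$ argument gives $\deg p_{I,J}\geq\min(i+j-1,k,n-1-k)=k$ uniformly in the parity of $k$. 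This is a real improvement over the paper, whose odd-$k$ case (Lemma~\ref{lem:n_large_superdiag_odd}) as printed does not close: applying Lemma~\ref{lem:n_large_superdiag_even} to the parameters $(n-1,k-1)$ yields $t^{(n-1,k-1)}_{i,j}=k-1$, not $k$, so the displayed chain only gives $k-1\leq t^{(n,k)}_{i,j}\leq k$; your argument supplies exactly the missing lower bound (and avoids the boundary overlap at $i+j=k$, where that lemma's statement conflicts with the theorem's first branch). Two small caveats, both shared with the paper: when both legs vanish (e.g.\ $n=2k$ and $\max(i,j)>k$, or $\min(i,j)>k$) the function is $\vek{0}$ and the stated formula itself fails, so your remark that $\max(i,j)>k$ always lands in the \enquote{otherwise} branch is accurate only when exactly one leg is empty; and the residual step needs $I\neq\emptyset$, which your swap $p_{I,J}=p_{J,I}$ handles except in the trivial case $i=j=0$.
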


The structure of the article is as follows:
Section~\ref{sec:prelim} introduces the necessary preliminaries.
Sections~\ref{sec:elem_basic} and~\ref{sec:elem_paired} establish elementary properties of basic and paired functions.
The latter includes Theorem~\ref{thm:paired_determines_I_J}, which shows that a paired function determines its legs essentially uniquely, except for a few boundary cases.
Building on these foundations, Section~\ref{sec:main_proof} proves the above stated Theorem~\ref{thm:paired_deg} by first considering the case $n = 2k$, and then reducing the general case to this special instance.
Section~\ref{sec:hartman} presents an application to Hartman's conjecture in design theory.
The final Section~\ref{sec:small_size} explores fixed-degree Boolean functions of small size.
Theorem~\ref{thm:paired_vs_pencil} characterizes all paired functions whose size falls below or matches the pencil bound.
We conclude with a computational investigation of the minimal sizes of fixed-degree Boolean functions, whose results are summarized in Table~\ref{tbl:t2},~\ref{tbl:t3}, and~\ref{tbl:t4}.
This leads to Conjecture~\ref{conj:pencil_paired_are_smallest}, which posits that functions of minimal size can always be found among the pencils, dual pencils, or, in certain cases, paired functions.

\section{Preliminaries}\label{sec:prelim}

\subsection{The degree, designs, and antidesigns}

We collect a few further notions and results about functions on the slice.
Let $f : \binom{V}{k} \to \R$.
We define three kinds of elementary modifications of $f$.
\begin{itemize}
	\item The \emph{dual} (or \emph{complementary}) function
	\[
		f^\perp : \binom{V}{n-k}\to\R\text{,}\quad B \mapsto f(B^\complement)\text{.}
	\]
	\item For $k \geq 1$, the \emph{derived} function in $x\in V$
	\[
		\Der_x(f) : \binom{V\setminus\{x\}}{k-1} \to \R\text{,}\quad K \mapsto f(K \cup \{x\})\text{.}
	\]
	\item For $n-k \geq 1$, the \emph{residual} function in $x\in V$
	\[
		\Res_x(f) : \binom{V\setminus\{x\}}{k} \to \R\text{,}\quad K \mapsto f(K)\text{.}
	\]
\end{itemize}
Clearly,
\[
	\#f^\perp = \#f\text{.}
\]
The functions $\Der_x(f)$ and $\Res_x(f)$ essentially split the domain $\binom{V}{k}$ of $f$ into those blocks containing $x$, and not containing $x$, respectively.
As a consequence,
\[
	\#\Der_x(f) + \#\Res_x(f) = \#f\text{.}
\]

Concerning the degrees of the above modifications, the following statements are known.

\begin{fact}[{{\cite[Th.~5.3]{Kiermaier-Mannaert-Wassermann-2025-JCTSA212:P105979}}}]\label{fct:dual_deg}
	Let $f : \binom{V}{k} \to \R$.
	Then $\deg(f^\perp) = \deg(f)$.
\end{fact}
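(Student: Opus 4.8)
The plan is to use the polynomial characterization of the degree stated in the Fact immediately preceding Definition~\ref{def:paired}: for $f : \binom{V}{k}\to\R$, the quantity $\deg_V(f)$ equals the minimum degree of a polynomial $\alpha\in\R[X_a\mid a\in V]$ with $f = f_\alpha$. The underlying idea is that passing from $f$ to $f^\perp$ corresponds, on the level of representing polynomials, to the affine change of variables $X_a\mapsto 1-X_a$, which is degree-preserving.

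Concretely, I would first pick a polynomial $\alpha$ representing $f$ with $\deg\alpha = \deg_V(f)$, and set $\beta = \alpha(1-X_a\mid a\in V)$, the polynomial obtained by substituting $1-X_a$ for every variable $X_a$. For a block $B\in\binom{V}{n-k}$, evaluating $\beta$ at the point $X_a = 1$ for $a\in B$ and $X_a = 0$ otherwise amounts to evaluating $\alpha$ at $X_a = 1-[a\in B] = [a\in B^\complement]$, that is, at the point associated with $B^\complement\in\binom{V}{k}$. Hence $f_\beta(B) = f_\alpha(B^\complement) = f(B^\complement) = f^\perp(B)$, so $\beta$ represents $f^\perp$.

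Next I would argue that the substitution preserves the total degree. Since $X_a\mapsto 1-X_a$ is an involutive $\R$-algebra automorphism of $\R[X_a\mid a\in V]$, it sends a polynomial of degree $d$ to one of degree at most $d$; applying it a second time recovers the original polynomial, which forces the degree to be preserved exactly, i.e.\ $\deg\beta = \deg\alpha$. Consequently $\deg_V(f^\perp)\le \deg\beta = \deg\alpha = \deg_V(f)$.

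Finally, the reverse inequality follows by symmetry: since $(f^\perp)^\perp = f$ (because $(B^\complement)^\complement = B$), applying the bound just obtained to $f^\perp$ in place of $f$ yields $\deg_V(f) = \deg_V((f^\perp)^\perp)\le \deg_V(f^\perp)$, and the two inequalities combine to the claimed equality. I expect the only point requiring genuine care to be the bookkeeping in the second paragraph, namely checking that the substituted polynomial really represents $f^\perp$ and not some other complementary function; everything else is formal, and in particular no spectral input is needed.
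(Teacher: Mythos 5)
Your argument is correct, but there is nothing in the paper to compare it against: this statement is imported as a Fact, cited from [Th.~5.3] of the reference Kiermaier--Mannaert--Wassermann (2025), and the present paper gives no proof of it. Your proof is therefore a genuine self-contained alternative, modulo the unnumbered Fact (stated, also without proof, just before Definition~\ref{def:paired}) that $\deg_V(f)$ is the minimum total degree of a polynomial representing $f$. Granting that, all three steps check out: substituting $X_a \mapsto 1-X_a$ turns a representative of $f$ on the slice $\binom{V}{k}$ into a representative of $f^\perp$ on the slice $\binom{V}{n-k}$; a substitution of the variables by polynomials of degree at most $1$ cannot raise the total degree, and since this particular substitution is an involution it must preserve the degree exactly; and $(f^\perp)^\perp = f$ upgrades the resulting inequality $\deg(f^\perp)\le\deg(f)$ to the claimed equality. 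What your route buys is an argument entirely inside the elementary polynomial picture, with no spectral input, very much in the spirit of this paper. The one caution is that your proof is only as unconditional as the polynomial characterization it invokes: in the cited source that characterization is itself a theorem, so a reader wanting a fully self-contained treatment would have to confirm it is not proved there via the duality statement, which would make the argument circular. Within the present paper's framework, where both statements are available as Facts, your derivation is sound.
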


\begin{fact}[{{\cite[Cor.~6.11]{Kiermaier-Mannaert-Wassermann-2025-JCTSA212:P105979}}}]\label{fct:der_res}
	Let $f : \binom{V}{k} \to \R$ and $x\in V$.
	\begin{enumerate}[(a)]
		\item If $k \geq 1$, then $\deg_{V\setminus\{x\}}(\Der_x(f)) \leq \deg_V(f)$.
		\item If $n-k \geq 1$, then $\deg_{V\setminus\{x\}}(\Res_x(f)) \leq \deg_V(f)$.
	\end{enumerate}
\end{fact}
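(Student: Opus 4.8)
The plan is to prove both parts uniformly through the polynomial characterization of the degree recorded just above (namely, that $\deg_V(f)$ equals the minimum degree of a polynomial in $\R[X_a \mid a\in V]$ representing $f$), rather than arguing directly with pencils. Write $d = \deg_V(f)$ and fix a polynomial $\alpha \in \R[X_a \mid a \in V]$ of degree $d$ with $f = f_\alpha$; such an $\alpha$ exists precisely by that characterization. The guiding idea is that both $\Der_x(f)$ and $\Res_x(f)$ are obtained from $f$ by fixing whether the point $x$ lies in the block, which on the level of polynomials amounts to specializing the single variable $X_x$ to a constant.

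For part (a) I would substitute $X_x = 1$ in $\alpha$, producing a polynomial $\beta \in \R[X_a \mid a \in V\setminus\{x\}]$ with $\deg \beta \leq \deg \alpha = d$, since replacing a variable by a constant cannot raise the degree. The key verification is that $\beta$ represents $\Der_x(f)$ on the slice $\binom{V\setminus\{x\}}{k-1}$: for $K \in \binom{V\setminus\{x\}}{k-1}$, evaluating $\beta$ at $X_a = 1$ for $a \in K$ and $X_a = 0$ otherwise is, by construction, the same as evaluating $\alpha$ at the point with $X_a = 1$ for $a \in K \cup \{x\}$ and $X_a = 0$ elsewhere, which equals $f(K\cup\{x\}) = \Der_x(f)(K)$. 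Hence $\Der_x(f)$ admits a representing polynomial of degree at most $d$, and so $\deg_{V\setminus\{x\}}(\Der_x(f)) \leq d$.

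Part (b) is entirely analogous, now substituting $X_x = 0$: the resulting polynomial $\gamma \in \R[X_a \mid a \in V\setminus\{x\}]$ again has degree at most $d$, and for $K \in \binom{V\setminus\{x\}}{k}$ (so that $x \notin K$) its evaluation reproduces that of $\alpha$ at the block $K$, giving $f(K) = \Res_x(f)(K)$. The hypothesis $n - k \geq 1$ is exactly what guarantees that $\binom{V\setminus\{x\}}{k}$ is a well-defined slice over the $(n-1)$-element set $V\setminus\{x\}$, and similarly $k \geq 1$ in part (a) makes $\binom{V\setminus\{x\}}{k-1}$ meaningful.

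This argument contains no genuine obstacle; the only points demanding care are bookkeeping ones, namely confirming that the specialized polynomial is regarded over the variable set $V \setminus \{x\}$ and evaluated on the correct smaller slice, and that the evaluation points of $\beta$ (respectively $\gamma$) match those of $\alpha$ under the appropriate inclusion of blocks. As an alternative route that stays within the pencil formulation defining the degree, one could instead compute $\Der_x$ and $\Res_x$ directly on a $t$-pencil $f_{I,V}$: the result is again a pencil, of size $t-1$ if $x \in I$ and of size $t$ if $x \notin I$, and one invokes the relation $\sum_{a \notin I} f_{I\cup\{a\},V} = (k - \#I)\, f_{I,V}$ to rewrite lower-size pencils as combinations of $t$-pencils before applying linearity. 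I would present the polynomial version as the main proof, however, since the pencil route forces one to track several degenerate boundary cases (for instance $t = k$, where restricted pencils may collapse to $\vek{0}$) that the specialization argument sidesteps automatically.
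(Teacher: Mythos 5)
Your proof is correct, and it necessarily differs from the paper, because the paper contains no proof of this statement at all: it is imported as a Fact, cited from \cite[Cor.~6.11]{Kiermaier-Mannaert-Wassermann-2025-JCTSA212:P105979}. Judged on its own, your specialization argument is sound and self-contained modulo the polynomial characterization of the degree, which the paper explicitly records as a Fact in Section~\ref{sect:intro}: substituting $X_x = 1$ (resp.\ $X_x = 0$) into a minimal-degree representing polynomial cannot raise the total degree, and your evaluation bookkeeping correctly identifies the specialized polynomial as a representative of $\Der_x(f)$ on $\binom{V\setminus\{x\}}{k-1}$ (resp.\ of $\Res_x(f)$ on $\binom{V\setminus\{x\}}{k}$); the hypotheses $k \geq 1$ and $n-k \geq 1$ enter exactly where you say they do, and the case $f = \vek{0}$ is harmless since the zero polynomial gives $-\infty$ on both sides. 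Your assessment of the alternative pencil route is also accurate and matches the machinery the paper itself deploys elsewhere: applying $\Der_x$ or $\Res_x$ to a $t$-pencil yields, by Lemma~\ref{lem:basic_elem_mod}, a $t$-pencil, a $(t-1)$-pencil, or $\vek{0}$ over $V\setminus\{x\}$, after which one concludes via Fact~\ref{fct:deg_linear_combination}\ref{fct:deg_linear_combination:sum} together with the absorption identity $\sum_{a \notin I} f_{I\cup\{a\},V} = (k - \#I)\,f_{I,V}$ (the $J = \emptyset$ instance of Equation~\eqref{eq:lem:monotonicity:1} in the proof of Lemma~\ref{lem:monotonicity}), whose coefficient indeed degenerates when $\#I = k$, as you note. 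What the polynomial route buys is uniformity across both parts and immunity to these boundary cases; its only cost is reliance on the equivalence of the polynomial and pencil notions of degree, which this paper states without proof, so within the paper's own axiomatics neither route is more primitive than the other.
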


As already mentioned in Section~\ref{sect:intro}, a function $f : \binom{V}{k} \to \R$ is of degree $0$ if and only if $f$ is non-zero and constant.
The Boolean functions of degree~$1$ are known, too:
\begin{fact}[{{\cite{Meyerowitz-1992-JCombinInformSystemSci17[1-2]:39-42} and \cite[Th.~1.2]{Filmus-Ihringer-2019-JCTSA162:241-270}}}]\label{fct:classification_degree_1}
	Every Boolean function $\binom{V}{k} \to \{0,1\}$ of degree $1$ is basic.
	So for $\min(k,n-k) \neq 0$, these functions are precisely the pencils $f_{\{x\},V}$ and the dual pencils $f_{\emptyset,V\setminus\{x\}} = f_{\{x\},V}^\perp$ with $x\in V$.
\end{fact}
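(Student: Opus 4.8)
The plan is to pass to a linear polynomial representative and then exploit Booleanness through single-element swaps. By the polynomial-degree characterisation recalled above, a function of degree~$1$ admits a representative $\alpha = c_0 + \sum_{a \in V} c_a X_a$, so that $f(K) = c_0 + \sum_{a \in K} c_a$ for every $K \in \binom{V}{k}$; moreover $f$ is non-constant, since degree~$0$ would force $f$ to be constant. All subsequent work takes place with this fixed affine form.

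First I would record the swap constraint. Fix distinct $a,b \in V$. Because $1 \le k \le n-1$, there is a block $K$ with $a \in K$ and $b \notin K$; replacing $a$ by $b$ produces a block $K'$ with $f(K') - f(K) = c_b - c_a$. As $f$ is Boolean, this difference lies in $\{-1,0,1\}$, hence $c_b - c_a \in \{-1,0,1\}$ for all distinct $a,b$. Thus every pairwise difference of the $c_a$ is an integer of absolute value at most~$1$; if three distinct coefficient values occurred, the two extreme ones would differ by at least~$2$, a contradiction. So the $c_a$ take at most two values, and since $f$ is non-constant, exactly two, differing by~$1$. Writing these as $v$ and $v+1$ and setting $A = \{a \in V : c_a = v+1\}$, a proper non-empty subset of $V$, I obtain $f(K) = d + \#(K \cap A)$ with $d = c_0 + vk$.

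It then remains to determine which sets $A$ and shifts $d$ yield a Boolean function and to recognise the outcome as basic. As $K$ ranges over $\binom{V}{k}$, the quantity $\#(K \cap A)$ runs through $\min(k,\#A,\#A^\complement,n-k)+1$ consecutive integers, starting at $\max(0,k-\#A^\complement)$. Booleanness of $f = d + \#(K\cap A)$ forces this count to assume exactly two values (one value would make $f$ constant, three or more would violate $\im(f)\subseteq\{0,1\}$), so $\min(k,\#A,\#A^\complement,n-k) = 1$. A short case distinction on which of the four quantities equals~$1$ finishes the argument. If $\#A = 1$, say $A = \{x\}$, then $f(K) = d + \#(K\cap\{x\})$ takes the values $d$ and $d+1$, both realised; matching $\{d,d+1\}$ with $\{0,1\}$ gives $d=0$ and $f = f_{\{x\},V}$, a pencil. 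If $\#A^\complement = 1$, say $A^\complement = \{y\}$, then $\#(K\cap A) = k - \#(K\cap\{y\})$ and the same matching yields $f = f_{\emptyset,V\setminus\{y\}}$, a dual pencil. When $\min(k,n-k)\ge 2$ these are the only options, so $f$ is a pencil or a dual pencil. In the boundary slices $k=1$ (resp.\ $n-k=1$) the minimiser may instead be $k$ (resp.\ $n-k$) with $\#A,\#A^\complement \ge 2$, and the identical computation identifies $f$ with the basic function $f_{\emptyset,A}$ (resp.\ $f_{A,V}$), collapsing to a dual pencil (resp.\ pencil) precisely when $A$ is a co-singleton (resp.\ singleton). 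In all cases $f$ is basic.

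I expect the main obstacle to be not any single conceptual step but the bookkeeping of this final case analysis: correctly evaluating the length $\min(k,\#A,\#A^\complement,n-k)$ of the range of $\#(K\cap A)$, pinning down the admissible shift $d$ so that $\{d,d+1\}=\{0,1\}$, and handling the degenerate slices $k\in\{1,n-1\}$ uniformly. The conceptual heart of the proof, by contrast, is the elementary swap argument of the second paragraph, which collapses an a~priori arbitrary linear form into a two-valued one.
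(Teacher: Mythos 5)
Your proof is correct, and a comparison with the paper's own argument is moot: the paper does not prove this statement at all, but imports it as a Fact from Meyerowitz and from Filmus--Ihringer, so your write-up supplies an elementary, self-contained proof where the paper only cites. The argument checks out at every step. Degree $1$ forces $\min(k,n-k)\geq 1$, so the separating blocks needed for the swap argument exist; Booleanness makes every difference $c_b-c_a$ an integer in $\{-1,0,1\}$, which (as you note, using integrality) collapses the coefficients to two values differing by $1$; and your count that $\#(K\cap A)$ ranges over $\min(k,\#A,\#A^\complement,n-k)+1$ consecutive integers is correct in all four regimes of the min, so Booleanness pins this minimum to $1$. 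The four resulting cases are handled correctly, including the determination of the shift $d$, yielding $f_{\{x\},V}$, $f_{\emptyset,V\setminus\{y\}}$, and in the boundary slices the basic functions $f_{\emptyset,A}$ and $f_{A,V}$.

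One point deserves emphasis: your case analysis is actually \emph{sharper} than the Fact as printed, and it exposes a slight looseness in its second sentence. For $k=1$ and $n\geq 4$, the function $f_{\emptyset,A}$ with $2\leq\#A\leq n-2$ has degree $1$ by Fact~\ref{fct:basic_deg} (namely $\min(n-\#A,\,1,\,n-1)=1$) and is Boolean, yet it is neither a pencil nor a dual pencil; dually for $k=n-1$. So the word \enquote{precisely} is only valid under $\min(k,n-k)\geq 2$, which is exactly the hypothesis $2\leq k\leq n-2$ under which the cited Theorem~1.2 of Filmus--Ihringer is proved, and not under the weaker condition $\min(k,n-k)\neq 0$ stated in the Fact. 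Your proof establishes the first sentence (every Boolean degree-$1$ function is basic) in full generality, correctly identifies the exceptional basic functions in the slices $k\in\{1,n-1\}$, and proves the \enquote{precisely} claim in the regime where it is true. This is the right reading of the statement, not a gap in your argument.
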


A set $D \subseteq \binom{V}{k}$ is called a $t$-$(v,k,\lambda)$ \emph{design} if for all $T\in\binom{V}{t}$, there are exactly $\lambda$ elements of $D$ (called \emph{blocks}) containing $T$.
For any $v$, $k$ and $t$ there are always the \emph{trivial} designs, namely the \emph{empty} $t$-$(v,k,0)$ design $\emptyset$ and the \emph{complete} $t$-$(v,k,\lambda_{\max})$ design $\binom{V}{k}$ with $\lambda_{\max} = \binom{v-t}{k-t}$.
Clearly, for any design, we have $0 \leq \lambda \leq \lambda_{\max}$.
The number $\lambda_{\max} / \lambda$ is known as the \emph{index}, and designs of index $2$ are called \emph{halvings}.

For all $i\in\{0,\ldots,t\}$, any $t$-$(v,k,\lambda)$ design $D$ is also an $i$-$(v,k,\lambda_i)$ design where
\begin{equation}\label{eq:lambda_i}
	\lambda_i
	= \lambda \cdot\frac{\binom{v-i}{t-i}}{\binom{k-i}{t-i}}
	= \lambda \cdot\frac{\binom{v-i}{k-i}}{\binom{v-t}{k-t}}\text{.}
\end{equation}
In particular,
\[
	\#D = \lambda_0 = \lambda \cdot \frac{\binom{v}{t}}{\binom{k}{t}} = \lambda \cdot \frac{\binom{v}{k}}{\binom{v-t}{k-t}}\text{.}
\]
The equations~\eqref{eq:lambda_i} imply the \emph{integrality conditions} on a numerical parameter set $t$-$(v,k,\lambda)$:
If the parameters are \emph{realizable} (i.\,e.\ a design with these parameters does exist), then they are \emph{admissible} in the sense that all numbers $\lambda_0,\ldots,\lambda_t$ are integers.

For $S\subseteq V$ and $i\in\N$, the intersection number
\[
	\alpha_i(S) = \#\{B\in D \mid \#(S \cap B) = i\}
\]
is defined.
Intersection numbers obey the \emph{Mendelsohn equations} \cite[Th.~1]{Mendelsohn-1971}, \cite[Satz~2]{Oberschelp-1972-MPSemBNF19:55-67}
\[
	\sum_{j=0}^k \binom{j}{i} \alpha_j(S) = \binom{s}{i} \lambda_i \quad (i\in\{0,\ldots,t\})\text{.}
\]
Via Gauss elimination, these system of linear equations is transformed into the \emph{Köhler equations}~\cite[Satz 1]{Koehler-1989-DM73[1-2]:133-142}, see also \cite[Satz~1]{Harnau-1988-RostMKoll34:47-52}, \cite[Th.~2.6]{Kiermaier-Pavcevic-2015-JCD23[11]:463-480}.

\begin{fact}[Köhler equations]\label{fct:koehler}
	Let $D$ be a $t$-$(v,k,\lambda)$ design, $S \subseteq V$ and $s = \#S$.
	For $i\in\{0,\ldots,t\}$, a parametrization of the intersection number $\alpha_i(S)$ by $\alpha_{t+1}(S),\ldots,\alpha_k(S)$ is given by
	\[
		\alpha_i(S) = \binom{s}{i} \sum_{j=i}^t (-1)^{j-i} \binom{s-i}{j-i} \lambda_i + (-1)^{t+1-i} \sum_{j=t+1}^k \binom{j}{i} \binom{j-i-1}{t-i} \alpha_i(S)\text{.}
	\]
\end{fact}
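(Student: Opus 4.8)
The plan is to realize the Köhler equations as the explicit solution of the Mendelsohn system for the \enquote{bound} intersection numbers $\alpha_0(S),\ldots,\alpha_t(S)$ in terms of the \enquote{free} ones $\alpha_{t+1}(S),\ldots,\alpha_k(S)$, which is precisely the Gauss elimination alluded to just before the statement. First I would rewrite the $i$-th Mendelsohn equation (for $i\in\{0,\ldots,t\}$) by moving the terms with $j>t$ to the right-hand side. Since $\binom{j}{i}=0$ for $j<i$, this gives
\[
	\sum_{j=i}^t \binom{j}{i}\alpha_j(S) = \binom{s}{i}\lambda_i - \sum_{j=t+1}^k \binom{j}{i}\alpha_j(S)\text{.}
\]
The coefficient matrix of the left-hand side, namely $\bigl(\binom{j}{i}\bigr)_{0\leq i,j\leq t}$, is unitriangular, so the system is uniquely solvable for $\alpha_0(S),\ldots,\alpha_t(S)$.

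The key tool is the inversion of this Pascal matrix: its inverse has entries $(-1)^{j-i}\binom{j}{i}$, a standard fact equivalent to binomial inversion. Applying it yields
\[
	\alpha_i(S) = \sum_{\ell=i}^t (-1)^{\ell-i}\binom{\ell}{i}\Bigl[\binom{s}{\ell}\lambda_\ell - \sum_{j=t+1}^k \binom{j}{\ell}\alpha_j(S)\Bigr]\text{,}
\]
and the remaining work is to simplify the two resulting sums into closed form. For the design-parameter part I would apply the subset-of-a-subset identity $\binom{s}{\ell}\binom{\ell}{i} = \binom{s}{i}\binom{s-i}{\ell-i}$ to factor out $\binom{s}{i}$, which immediately produces the first summand of the claimed formula. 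For the part involving the free variables, the same identity gives $\binom{j}{\ell}\binom{\ell}{i} = \binom{j}{i}\binom{j-i}{\ell-i}$, reducing the inner sum over $\ell$ to $\binom{j}{i}\sum_{m=0}^{t-i}(-1)^m\binom{j-i}{m}$ after substituting $m=\ell-i$.

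The last piece is the partial alternating binomial sum $\sum_{m=0}^{r}(-1)^m\binom{N}{m} = (-1)^r\binom{N-1}{r}$, which I would apply with $N=j-i$ and $r=t-i$ to collapse the inner sum to $(-1)^{t-i}\binom{j-i-1}{t-i}$; combining this sign with the leading minus sign produces the stated prefactor $(-1)^{t+1-i}$ on the free-variable sum. I do not expect a genuine obstacle here, since the argument is entirely a matrix inversion followed by three standard binomial identities; the only real care needed lies in tracking the summation ranges and signs. The one point worth double-checking is the boundary case $j=t+1$, where $\binom{j-i-1}{t-i}=\binom{t-i}{t-i}=1$, confirming that $\alpha_{t+1}(S)$ enters with the expected leading coefficient $(-1)^{t+1-i}\binom{t+1}{i}$.
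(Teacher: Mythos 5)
Your proof is correct and follows exactly the route the paper indicates: the paper states this as a Fact cited from the literature without giving a proof, describing it only as the result of Gauss elimination applied to the Mendelsohn equations, and your binomial inversion of the unitriangular Pascal matrix $\bigl(\binom{j}{i}\bigr)_{0\leq i,j\leq t}$ is precisely that elimination carried out in closed form. Note that your derivation also shows the statement as printed contains two index typos: the first sum must involve $\lambda_j$ rather than $\lambda_i$, and the second sum must involve $\alpha_j(S)$ rather than $\alpha_i(S)$ --- otherwise the right-hand side would not be a parametrization by $\alpha_{t+1}(S),\ldots,\alpha_k(S)$ at all --- and what you prove is this corrected form.
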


There is the following connection between $t$-designs and functions of degree $t$, see for example the discussion in \cite[Sec.~4]{Kiermaier-Mannaert-Wassermann-2025-JCTSA212:P105979}.
\begin{fact}[Design orthogonality]\label{fct:design_orthogonality}
	Let $A,D \subseteq \binom{V}{k}$ such that $\deg(A) \leq t$ and $D$ is a $t$-design.
	Then
	\[
		\#(A \cap D) = \frac{\lambda}{\lambda_{\max}}\cdot\#A
	\]
\end{fact}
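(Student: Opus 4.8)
The plan is to pass to the bilinear pairing $\langle f,g\rangle = \sum_{K} f(K)g(K)$ on functions on the slice, under which $\#(A\cap D) = \langle\chi_A,\chi_D\rangle$ and $\#A = \langle\chi_A,\vek 1\rangle$. The asserted identity then becomes the linear relation $\langle\chi_A,\chi_D\rangle = \frac{\lambda}{\lambda_{\max}}\langle\chi_A,\vek 1\rangle$ in the argument $\chi_A$. By the very definition of degree, $\deg(A)\le t$ means that $\chi_A = \sum_i c_i f_{S_i,V}$ is an $\R$-linear combination of pencils with $\#S_i\le t$. Since the pairing is linear in its first slot, it suffices to prove the identity for a single pencil $f_{S,V}$ with $s:=\#S\le t$ and then recombine with the coefficients $c_i$.

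For the single-pencil case I would unwind both sides combinatorially. On the left, $\langle f_{S,V},\chi_D\rangle = \#\{B\in D: S\subseteq B\}$ equals $\lambda_s$, because a $t$-design is in particular an $s$-design for $s\le t$. On the right, $\langle f_{S,V},\vek 1\rangle = \#\mathcal F_{S,V} = \binom{n-s}{k-s}$. The desired equality therefore reduces to showing $\lambda_s/\binom{n-s}{k-s} = \lambda/\lambda_{\max}$, which drops out immediately from the formula~\eqref{eq:lambda_i} for $\lambda_s$ together with $\lambda_{\max}=\binom{n-t}{k-t}$.

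The calculations are entirely routine, so I expect no genuine obstacle. The one point deserving attention is that the reduction has to work for pencils of every size $s\le t$, not merely for $s=t$: what makes linearity transport the relation cleanly is that the proportionality constant $\lambda_s/\binom{n-s}{k-s}$ is independent of $s$, always collapsing to $\lambda/\lambda_{\max}$. Once this uniformity is established, summing the single-pencil identities against the $c_i$ yields the statement for every $A$ with $\deg(A)\le t$, the degenerate case $A=\emptyset$ being covered by the empty linear combination.
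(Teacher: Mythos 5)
Your proof is correct. Note that the paper itself offers no proof of this statement: it is recorded as a Fact, with a pointer to the discussion in \cite[Sec.~4]{Kiermaier-Mannaert-Wassermann-2025-JCTSA212:P105979}, so there is no in-paper argument to compare yours against. Your route is the standard one and it is sound: reduce by linearity of the pairing to a single pencil $f_{S,V}$ with $s = \#S \leq t$, use that a $t$-design is an $s$-design to get $\#(\mathcal{F}_{S,V}\cap D) = \lambda_s$, and check from Equation~\eqref{eq:lambda_i} that $\lambda_s/\binom{n-s}{k-s} = \lambda/\lambda_{\max}$ independently of $s$ --- this uniformity, which you correctly flag as the crux, is what lets the proportionality survive recombination. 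One small point of bookkeeping: the paper's definition of degree uses $\R$-linear combinations of pencils of size \emph{exactly} $t$, whereas you allow mixed sizes $\#S_i \leq t$; this is harmless, since a function of degree at most $t$ is by definition a combination of $t'$-pencils for a single $t' \leq t$, a special case of what you prove, and your argument in fact establishes the slightly more general statement. The degenerate case $A=\emptyset$ (degree $-\infty$) is indeed covered by the empty combination, as you note.
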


The following property is a further consequence of the connection to designs and their integrality conditions.

\begin{fact}[{{Divisibility property, \cite[Th.~4.7]{Kiermaier-Mannaert-Wassermann-2025-JCTSA212:P105979}}}]\label{fct:divisibility}
	Let $A \subseteq \binom{V}{k}$ be a non-empty set of degree $t$.
	Then
	\[
		\gcd\left(\tbinom{n-0}{k-0}, \tbinom{n-1}{k-1}, \ldots, \tbinom{n-t}{k-t}\right) \mid \#A\text{.}
	\]
\end{fact}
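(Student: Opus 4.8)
The plan is to deduce the divisibility from the representation of $\chi_A$ by $t$-pencils, paired against integer-valued designs, together with the integrality conditions~\eqref{eq:lambda_i}. Call a function $g\colon\binom{V}{k}\to\Z$ an \emph{integral $t$-design of index $\lambda$} if $\sum_{K\supseteq T}g(K)=\lambda$ for every $T\in\binom{V}{t}$. First I would record the following integral refinement of the orthogonality in Fact~\ref{fct:design_orthogonality}, proved directly and without spectral tools. Since $\deg(A)\le t$, write $\chi_A=\sum_{T\in\binom{V}{t}}c_T\,f_{T,V}$ with $c_T\in\R$. For any integral $t$-design $g$ of index $\lambda$ we then obtain, on the one hand,
\[
	\sum_{K\in A}g(K)=\sum_{T}c_T\sum_{K\supseteq T}g(K)=\lambda\sum_{T}c_T\text{,}
\]
and on the other hand $\#A=\sum_T c_T\,\#\mathcal{F}_{T,V}=\binom{n-t}{k-t}\sum_T c_T$. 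Eliminating $\sum_T c_T$ yields
\[
	\sum_{K\in A}g(K)=\frac{\lambda}{\binom{n-t}{k-t}}\,\#A\text{.}
\]

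The left-hand side is an integer, because $g$ is integer-valued and $A$ is a set. Hence this identity already forces $\binom{n-t}{k-t}\mid\lambda\cdot\#A$ for \emph{every} index $\lambda$ realised by an integral $t$-design. It therefore suffices to exhibit a single integral $t$-design whose index equals
\[
	\lambda_{\min}\coloneqq\frac{\binom{n-t}{k-t}}{\gcd\!\left(\binom{n-0}{k-0},\ldots,\binom{n-t}{k-t}\right)}\text{,}
\]
for then $\binom{n-t}{k-t}\mid\lambda_{\min}\cdot\#A$ collapses exactly to the asserted $\gcd_{i=0}^{t}\binom{n-i}{k-i}\mid\#A$. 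That no smaller index can occur follows from the integrality conditions: applying the standard inclusion relation between $i$-, $t$- and $k$-subsets to $g$ shows that an integral $t$-design is automatically an integral $i$-design of index $\lambda\binom{n-i}{k-i}/\binom{n-t}{k-t}$ for every $i\le t$, and integrality of these numbers (compare~\eqref{eq:lambda_i}) forces $\lambda_{\min}\mid\lambda$. So $\lambda_{\min}$ is exactly the target.

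The remaining step, and the genuine obstacle, is the \emph{existence} of an integral $t$-design attaining the index $\lambda_{\min}$; equivalently, that $\lambda_{\min}\cdot\vek{1}$ lies in the $\Z$-span of the rows of the $t$-versus-$k$ inclusion matrix. Over $\Q$ this is trivial, since the complete design realises $\vek{1}$ after dividing by $\binom{n-t}{k-t}$; the entire difficulty is the integrality, i.e.\ the Smith normal form of the inclusion matrix along the all-ones vector. The hard part will be precisely this arithmetic control, which I would handle either by invoking the known integral diagonalisation of inclusion matrices, or by an explicit induction organised around the gcd identity
\[
	\gcd\!\left(\tbinom{n}{k},\tbinom{n-1}{k-1},\ldots,\tbinom{n-t}{k-t}\right)=\gcd\!\left(\tbinom{n}{k},\ \gcd\!\left(\tbinom{n-1}{k-1},\ldots,\tbinom{n-t}{k-t}\right)\right)\text{,}
\]
whose inner term is the analogous quantity for parameters $(n-1,k-1)$ and strength $t-1$. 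Concretely, one would cone an integral $(t-1)$-design of minimal index on $\binom{V\setminus\{x\}}{k-1}$ together with a suitable multiple of the complete design, choosing the integer weights by Bézout so that the combined index is exactly $\lambda_{\min}$.

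Finally, it is worth noting the equivalent polynomial formulation of the same reduction, which connects to the representation facts of Section~\ref{sect:intro}: writing $\chi_A$ as a multilinear polynomial of degree $\le t$ and summing its evaluations over $\binom{V}{k}$ gives $\#A=\sum_{i=0}^{t}\beta_i\binom{n-i}{k-i}$, where $\beta_i$ collects the degree-$i$ coefficients, so that the claim becomes the assertion that this weighted sum lies in the ideal $\bigl(\tbinom{n}{k},\ldots,\tbinom{n-t}{k-t}\bigr)=\gcd(\cdots)\,\Z$; the obstacle then reappears as the need to realise a degree-$\le t$ representative integrally. Duality (Fact~\ref{fct:dual_deg}) shows the statement is symmetric under $k\leftrightarrow n-k$, which lets one work with whichever of the two parameters makes the construction cleaner.
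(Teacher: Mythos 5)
The paper does not prove this statement at all: it is imported as a Fact, cited verbatim from \cite[Th.~4.7]{Kiermaier-Mannaert-Wassermann-2025-JCTSA212:P105979}, so there is no internal proof to compare against and your attempt must stand on its own. Its first half does: writing $\chi_A=\sum_T c_T f_{T,V}$ (legitimate, since $\deg(A)\leq t$), pairing against an integer-valued $g$ with constant $t$-wise sums $\lambda$, and eliminating $\sum_T c_T$ correctly gives $\binom{n-t}{k-t}\mid\lambda\cdot\#A$; and the integrality argument correctly shows that every realizable index of such a signed design is a multiple of $\lambda_{\min}=\binom{n-t}{k-t}/\gcd\bigl(\binom{n}{k},\ldots,\binom{n-t}{k-t}\bigr)$.

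The genuine gap is the step you yourself flag as ``the genuine obstacle'': the \emph{existence} of an integer-valued $t$-design of index exactly $\lambda_{\min}$. Without it, your argument only yields $\binom{n-t}{k-t}\mid\lambda_0\cdot\#A$, where $\lambda_0$ is the unknown generator of the ideal of realizable indices; a priori $\lambda_0$ could be a proper multiple of $\lambda_{\min}$, in which case the conclusion is strictly weaker than the claimed divisibility. This existence statement is not a routine verification but a theorem in its own right (the Graver--Jurkat/Wilson theorem on signed designs: the integrality conditions are sufficient for integral $t$-designs), and the inductive construction you sketch in its place does not work as stated. If $g'$ is an integral $(t-1)$-design on $\binom{V\setminus\{x\}}{k-1}$ and you cone it onto the blocks through $x$, then for a $t$-set $T$ with $x\in T$ the sum $\sum_{K\supseteq T}g(K)$ is indeed the index of $g'$; but for $T$ with $x\notin T$ it equals $\sum_{K'\in\binom{V\setminus\{x\}}{k-1},\,K'\supseteq T}g'(K')$, which is constant only if $g'$ is already an integral \emph{$t$}-design on $(k-1)$-sets, and adding multiples of the complete design cannot repair this since it shifts both families of sums by the same amount. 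So the proposal is a correct reduction of the Fact to the Graver--Jurkat/Wilson existence theorem, but as a proof it is incomplete: either that theorem must be cited explicitly, or a genuinely different construction of a signed design of index $\lambda_{\min}$ must be supplied.
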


\subsection{Association schemes}
We give a brief outline of the relevant parts of the theory of association schemes by Delsarte \cite{Delsarte-1973-PhilRRSuppl10}.
Let $X \neq \emptyset$ be a finite set.
A partition $\mathcal{R} = \{R_0,\ldots,R_d\}$ of $X\times X$ is called a \emph{(symmetric) association scheme} if $R_0$ is the identity relation, all relations $R_i$ are symmetric, and there exist \emph{intersection numbers} $p_{ij}^\ell$ such that for all $(x,y)\in R_\ell$, the number of $z\in X$ with $(x,z)\in R_i$ and $(z,y)\in R_j$ equals $p_{ij}^\ell$.
By the properties of an association scheme, the matrix space $\mathcal{B} = \langle D_0,\ldots,D_d\rangle_{\R}$, spanned by the adjacency matrices $D_i$ of the relations $R_i$, is a commutative unital $\R$-algebra consisting of symmetric matrices, known as the \emph{Bose-Mesner-Algebra} of $\mathcal{R}$.
Its elements are simultaneously diagonalizable by an orthogonal matrix, and the primitive idempotents of $\mathcal{B}$ provide a second $\R$-basis $\{E_0,\ldots,E_d\}$ of $\mathcal{B}$ with $E_0 = \frac{1}{\#X} J$, where $J$ denotes the all-one matrix.
Hence, there exist unique real numbers $P_i(\ell),Q_\ell(i)\in\R$ with $D_i = P_i(0) E_0 + \ldots + P_i(d) E_d$ and $E_\ell = Q_{\ell}(0) D_0 + \ldots + Q_{\ell}(d) E_d$.
The matrices $P = (P_{i}(\ell))_{i\ell}$ and $Q = (Q_{\ell}(i))_{\ell i}$ are known as the \emph{first} and the \emph{second eigenmatrix} of $\mathcal{R}$, respectively.
The association scheme $\mathcal{R}$ is called \emph{$Q$-polynomial} with respect to a fixed order of the matrices $E_{\ell}$ if there exist polynomials $f_0,\ldots,f_d\in \R[X]$ of degree $\deg(f_\ell) = \ell$ and positions $z_0,\ldots,z_d\in\R$ such that $Q_{\ell i} = f_\ell(z_i)$.

For a non-empty set $Y \subseteq X$, the \emph{inner distribution} $(a_0,\ldots,a_d)$ of $Y$ is defined by $a_i = \frac{\#((Y \times Y) \cap R_i)}{\#Y}$, and the \emph{dual inner distribution} as $(b_0,\ldots,b_d)^\top = Q(a_0,\ldots,a_d)^\top$.
Clearly, $a_0 = 1$ and $a_0 + \ldots + a_d = \#Y$.
An important property of the dual distribution is $b_\ell \geq 0$ for all $\ell\in\{0,\ldots,d\}$, which is the basis for Delsarte's LP-bound.
In a $Q$-polynomial association scheme, $Y$ is called a \emph{$t$-design} if $b_1 = \ldots = b_t = 0$, and a \emph{$t$-antidesign} if $b_{t+1} = \ldots = b_d = 0$.%
\footnote{
	Note that this definition depends on the chosen order of the matrices $E_\ell$.
}

For $X = \binom{V}{k}$, the \emph{Johnson scheme} $J(n,k)$ is defined by the relations $R_i = \{(A,B)\in X\times X \mid \#(A \cap B) = k-i\}$ where $i\in\{0,\ldots,k\}$.
The eigenmatrices $P$ and $Q$ are given by evaluations of so-called Eberlein- and Hahn-Polynomials.
The Johnson scheme is $Q$-polynomial.
With respect to the corresponding order of the matrices $E_\ell$,
\[
	Q_\ell(i) = \left(\binom{n}{\ell} - \binom{n}{\ell-1}\right)\sum_{j=0}^\ell (-1)^j \frac{\binom{\ell}{j}\binom{n+1-\ell}{j}}{\binom{k}{j}\binom{n-k}{j}} \binom{i}{j}\text{.}
\]
A non-empty set $Y \subseteq \binom{V}{k}$ is a $t$-design in the Johnson scheme if and only if it is a combinatorial $t$-design.
Moreover, we have the following alternative characterization of the degree:
\begin{fact}
	Let $Y \subseteq \binom{V}{k}$ be non-empty.
	Then $\deg(Y) \geq t$ if and only if $Y$ is a $t$-antidesign in the Johnson scheme $J(n,k)$.
\end{fact}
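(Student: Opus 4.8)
The plan is to prove the Fact by relating $\deg(Y)$ to the support of the dual inner distribution $(b_0,\ldots,b_d)$ of $Y$ through the spectral decomposition of the Bose--Mesner algebra of $J(n,k)$. The antidesign condition $b_{t+1}=\cdots=b_d=0$ is a statement about which dual distribution components vanish, whereas $\deg(Y)$ is a statement about the polynomial degree of $\chi_Y$; the whole point is to exhibit these as two readings of the same spectral data, so that the equivalence of the Fact becomes a direct comparison of index sets.

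First I would express each $b_\ell$ through the orthogonal projection onto the eigenspace $V_\ell$. With $X=\binom{V}{k}$ the inner distribution satisfies $a_i=\frac{1}{\#Y}\chi_Y^\top D_i\chi_Y$, and inserting the expansion $E_\ell=\frac{1}{\#X}\sum_i Q_\ell(i)D_i$ of the primitive idempotents yields
\[
	b_\ell=\sum_i Q_\ell(i)\,a_i=\frac{\#X}{\#Y}\,\chi_Y^\top E_\ell\chi_Y=\frac{\#X}{\#Y}\,\lVert E_\ell\chi_Y\rVert^2\text{,}
\]
where I use that each $E_\ell$ is a symmetric idempotent, i.e.\ the orthogonal projection onto $V_\ell$. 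This recovers $b_\ell\ge 0$ and, decisively, shows $b_\ell=0$ exactly when the component of $\chi_Y$ in $V_\ell$ vanishes.

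Second I would identify the eigenspaces with the degree filtration. Since $Q_\ell(i)$ is a polynomial of degree $\ell$ in $i$ (the $Q$-polynomial order), the subspace $V_0\oplus\cdots\oplus V_t$ is precisely the space of functions on $\binom{V}{k}$ representable by a polynomial of degree at most $t$, i.e.\ the span of the $t$-pencils. Combined with the earlier Fact that $\deg(Y)$ is the least representing polynomial degree, this shows that $\deg(Y)$ is governed entirely by the set of levels $\ell$ at which $\chi_Y$ has a nonzero component, which by the first step is exactly the set of $\ell$ with $b_\ell\neq 0$. Comparing this level set against the antidesign condition $b_{t+1}=\cdots=b_d=0$ then gives the equivalence asserted in the Fact, namely that $\deg(Y)\ge t$ if and only if $Y$ is a $t$-antidesign.

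The main obstacle I anticipate is not the spectral identity but the careful handling of two conventions from the preliminaries: the exact normalization relating $b_\ell$ to $\lVert E_\ell\chi_Y\rVert^2$ (so that the vanishing equivalence holds on the nose, independent of the scaling in the displayed formula for $E_\ell$), and the identification of the span of the $t$-pencils with $V_0\oplus\cdots\oplus V_t$, which rests on $Q$-polynomiality together with the evaluation homomorphism of Section~\ref{sect:intro}. Once these two points are pinned down, the Fact reduces to reading the same support of the dual distribution in two equivalent ways.
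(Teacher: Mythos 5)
The paper states this Fact without proof, so there is no in-paper argument to compare against; your spectral route is the standard Delsarte-style derivation, and its two main steps are sound: the identity $b_\ell = \frac{\#X}{\#Y}\lVert E_\ell \chi_Y\rVert^2$ (so $b_\ell = 0$ exactly when the projection of $\chi_Y$ to $V_\ell$ vanishes), and the identification of the degree filtration with the eigenspace filtration, which together give $\deg(Y) = \max\{\ell : b_\ell \neq 0\}$. But precisely because those steps are correct, your final sentence does not follow from them: with the paper's definition of a $t$-antidesign ($b_{t+1} = \cdots = b_d = 0$), your identity yields ``$\deg(Y) \leq t$ if and only if $Y$ is a $t$-antidesign'' --- the \emph{opposite} inequality to the printed statement. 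The statement as printed is in fact false under the paper's definitions: $Y = \binom{V}{k}$ is a $t$-antidesign for every $t$ yet has degree $0$, while a single block has degree $\min(k,n-k)$ but is a $t$-antidesign for no smaller $t$. The ``$\geq$'' in the Fact is evidently a typo for ``$\leq$''; this is also the direction actually used in Lemma~\ref{lem:lp_bound}, where a set of degree $t$ is required to satisfy $b_\ell = 0$ for all $\ell \in \{t+1,\ldots,d\}$. You should have flagged this discrepancy instead of asserting that ``comparing the level sets'' delivers the printed inequality; as written, your conclusion contradicts your own intermediate result.

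Two smaller points on the step you yourself single out as delicate. The identification of $V_0 \oplus \cdots \oplus V_t$ with the span of the $t$-pencils needs both inclusions: $Q$-polynomiality gives $V_\ell \subseteq U_\ell$ (each entry of $E_\ell$ is a degree-$\ell$ polynomial in $\#(K \cap K')$, which is linear in the indicator variables $X_a$), but the reverse inclusion requires a dimension count, e.g.\ $\sum_{\ell=0}^{t}\bigl(\binom{n}{\ell} - \binom{n}{\ell-1}\bigr) = \binom{n}{t} = \dim U_t$, which your sketch omits. Also, on the normalization you worry about: the paper's displayed expansion of $E_\ell$ in the $D_i$ omits the factor $1/\#X$ of the standard convention you use; your choice is the one consistent with the paper's displayed formula for $Q_\ell(i)$ (note $Q_\ell(0) = \binom{n}{\ell} - \binom{n}{\ell-1} = \operatorname{rank} E_\ell$), and in any case the equivalence $b_\ell = 0 \iff E_\ell \chi_Y = 0$ is scaling-independent, as you correctly observe.
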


We record the following consequence.

\begin{lemma}[LP-bound for the degree]\label{lem:lp_bound}
	Let $Y \subseteq \binom{V}{k}$ be a non-empty set of degree $t$.
	Then $\#Y$ is lower bounded by the optimal value of following linear program in the variables $a_0,\ldots,a_d\in\R$:
	\[
		\begin{tblr}{lll}
			\text{minimize} & a_0 + \ldots + a_d \\
			\text{subject to} & Q_\ell(0) a_0 + \ldots + Q_\ell(d) a_d = 0 & \text{for all }\ell\in\{t+1,\ldots,d\}\text{,} \\
			\text{and} & Q_\ell(0) a_0 + \ldots + Q_\ell(d) a_d \geq 0 & \text{for all }\ell\in\{0,\ldots,t\}\text{,} \\
			\text{and} & a_i \geq 0 & \text{for all }i\in\{0,\ldots,d\}\text{.} \\
		\end{tblr}
	\]
\end{lemma}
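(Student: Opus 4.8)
The plan is to recognize this as the standard Delsarte linear programming bound and to prove it by exhibiting a single explicit feasible point of the stated program -- namely the inner distribution of $Y$ itself -- whose objective value turns out to be exactly $\#Y$. Since the program is a minimization, producing one feasible point with value $\#Y$ shows that its optimal value is at most $\#Y$, which is precisely the asserted inequality $\#Y \geq (\text{optimum})$. Thus the entire argument reduces to two bookkeeping checks: that the inner distribution is feasible, and that it attains objective value $\#Y$.

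First I would let $(a_0,\ldots,a_d)$ denote the inner distribution of $Y$ in the Johnson scheme $J(n,k)$ and verify the three families of constraints in turn. The sign conditions $a_i \geq 0$ are immediate from the defining formula $a_i = \#((Y\times Y)\cap R_i)/\#Y$. For the inequality constraints indexed by $\ell \in \{0,\ldots,t\}$, I would observe that the left-hand side $Q_\ell(0)a_0 + \ldots + Q_\ell(d)a_d$ is exactly the entry $b_\ell$ of the dual inner distribution $(b_0,\ldots,b_d)^\top = Q(a_0,\ldots,a_d)^\top$; its nonnegativity $b_\ell \geq 0$ is the fundamental positivity property of dual distributions recalled in the preliminaries (the very property underlying Delsarte's bound), which holds for every $\ell$, in particular for $\ell \leq t$. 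For the equality constraints indexed by $\ell \in \{t+1,\ldots,d\}$, the same identification reads $b_\ell = 0$; here I would invoke that a set of degree $t$ is a $t$-antidesign in $J(n,k)$, which by definition means $b_{t+1} = \ldots = b_d = 0$.

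It remains to compute the objective value. By the relation $a_0 + \ldots + a_d = \#Y$ recorded among the basic properties of the inner distribution, the point $(a_0,\ldots,a_d)$ has objective value exactly $\#Y$. Combining this with the feasibility established above, the minimum of the program is at most $\#Y$; equivalently, $\#Y$ is bounded below by the optimal value, as claimed.

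I do not expect any genuine obstacle here: every nontrivial ingredient -- the nonnegativity of the dual distribution, the antidesign characterization of the degree, and the size identity $\sum_i a_i = \#Y$ -- is quoted verbatim from the preliminaries, so the proof is a direct assembly of definitions. The only point demanding a little care is consistency of conventions: one must use the $Q$-polynomial ordering of the idempotents $E_\ell$ for which \enquote{degree $t$} corresponds precisely to $b_{t+1} = \ldots = b_d = 0$, so that the equality constraints of the program are exactly the antidesign conditions with no index off by one.
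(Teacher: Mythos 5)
Your proof is correct and matches the paper's intent exactly: the paper offers no written proof, recording the lemma as an immediate consequence of the preliminaries, and your assembly -- feasibility of the inner distribution of $Y$ via $a_i \geq 0$, the positivity $b_\ell \geq 0$, and the antidesign characterization of the degree, together with the identity $a_0 + \ldots + a_d = \#Y$ -- is precisely that intended argument. As a side remark, your proof would also survive unchanged if one added the normalization $a_0 = 1$ that is standard in Delsarte's LP bound (and arguably should appear in the program as stated, since otherwise the zero vector is feasible and the optimum is trivially $0$), because the inner distribution satisfies $a_0 = 1$.
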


\section{Elementary properties of basic functions}\label{sec:elem_basic}

In this section, we consider sets $I \subseteq J \subseteq V$ of size $i = \#I$ and $j = \#J$.
Clearly, all basic functions $f_{I,J}$ are Boolean, and their size is
\[
	\#f_{I,J} = \binom{j-i}{k-i}\text{.}
\]
The basic functions include the special case $f_{\emptyset,V} = \vek{1}$.

\begin{lemma}\label{lem:basic}
	Let $I,J$ be subsets of $V$ with $I \subseteq J$ of sizes $i = \#I$ and $j = \#J$.
	\begin{enumerate}[(a)]
		\item\label{lem:basic:0} $f_{I,J} = \vek{0}$ if and only if $i > k$ or $j < k$.
		\item\label{lem:basic:size1} $\#f_{I,J} = 1$ if and only if $i = k$ or $j = k$.
		More precisely, for $K\in\binom{V}{k}$ and $I' \subseteq J' \subseteq V$, we have $\mathcal{F}_{\!I',J'} = \{K\}$ if and only if $I=K$ or $J=K$.
		\item\label{lem:basic:generic} $\#f_{I,J} \geq 2$ if and only if $i < k < j$.
		In this case, the sets $I$ and $J$ are uniquely determined by $f_{I,J}$ as $I = \bigcap \mathcal{F}_{\!I,J}$ and $J = \bigcup \mathcal{F}_{\!I,J}$.
		\item\label{lem:basic:1} $f_{I,J} = \vek{1}$ if and only if $I = \emptyset$ and $J = V$.
	\end{enumerate}
\end{lemma}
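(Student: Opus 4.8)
The plan is to let the size formula $\#f_{I,J} = \binom{j-i}{k-i}$ recorded just above do all the counting work in parts \ref{lem:basic:0}--\ref{lem:basic:generic} at once. I would set $m = j-i$ and $r = k-i$, noting $m \geq 0$ because $I \subseteq J$, and invoke the elementary trichotomy for binomial coefficients: $\binom{m}{r}$ is $0$ iff $r<0$ or $r>m$, is $1$ iff $r \in \{0,m\}$, and is $\geq 2$ iff $0<r<m$. These three ranges partition all integers $r$. Re-substituting $r = k-i$ and $m = j-i$ turns them into the three conditions ``$i>k$ or $j<k$'', ``$i=k$ or $j=k$'', and ``$i<k<j$''; since $f_{I,J} = \vek{0} \iff \#f_{I,J} = 0$, this settles the cardinality equivalences of all three parts simultaneously.

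Next I would prove the sharper, set-theoretic claims. For the refinement in \ref{lem:basic:size1} I would argue by cardinalities: if $I' = K$ then $\#I' = k$, so any $K'$ with $I' \subseteq K' \subseteq J'$ satisfies $K \subseteq K'$ and $\#K' = k = \#K$, forcing $K' = K$; the case $J' = K$ is symmetric. Conversely, $\mathcal{F}_{I',J'} = \{K\}$ gives $\#f_{I',J'} = 1$, so by the part just proved $\#I' = k$ or $\#J' = k$, and together with $I' \subseteq K \subseteq J'$ and $\#K = k$ this yields $I' = K$ or $J' = K$. For the uniqueness in \ref{lem:basic:generic}, the inclusions $I \subseteq \bigcap \mathcal{F}_{I,J}$ and $\bigcup \mathcal{F}_{I,J} \subseteq J$ are immediate. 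The one genuine computation is the reverse inclusion: for $a \in J \setminus I$ I would exhibit a block of $\mathcal{F}_{I,J}$ that omits $a$ and another that contains $a$, which is possible exactly because $i<k<j$ leaves a free element to swap (omitting $a$ needs $k-i \leq j-i-1$, i.e.\ $k<j$; including $a$ needs $0 \leq k-i-1 \leq j-i-1$, i.e.\ $i<k\leq j$). Combined with the fact that elements of $I$ lie in every block and elements of $J^\complement$ in none, this gives $\bigcap \mathcal{F}_{I,J} = I$ and $\bigcup \mathcal{F}_{I,J} = J$.

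Finally, for \ref{lem:basic:1}, the direction $I=\emptyset,\ J=V \Rightarrow f_{I,J}=\vek{1}$ is immediate from $\mathcal{F}_{\emptyset,V} = \binom{V}{k}$. For the converse I would reduce to \ref{lem:basic:generic}: when $0<k<n$ we have $\#\binom{V}{k} = \binom{n}{k} \geq 2$, so $f_{I,J}=\vek{1}$ forces $I = \bigcap \binom{V}{k}$ and $J = \bigcup \binom{V}{k}$, and since any prescribed element can be avoided by some $k$-set (as $k<n$) and contained in some $k$-set (as $k\geq 1$), these evaluate to $\emptyset$ and $V$. The step I expect to be the real obstacle is not any of the counting, which is mechanical, but the two degenerate slices $k\in\{0,n\}$: there $\binom{V}{k}$ is a single block, the trichotomy of the first paragraph collapses, and the equivalence needs the hypothesis $0<k<n$ to hold as stated (for instance $f_{I,V}=\vek{1}$ for \emph{every} $I\subseteq V$ when $k=n$). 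I would therefore dispose of these boundary cases by direct inspection and read \ref{lem:basic:1} under the standing assumption $0<k<n$.
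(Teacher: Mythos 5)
Your proposal is correct, and it is in fact considerably more detailed than the paper's own proof, which dismisses everything except the uniqueness claim in part~\ref{lem:basic:generic} as straightforward. The one place where a real argument is required is exactly that uniqueness claim, and there your route genuinely differs: the paper argues by contradiction, picking $x \in \bigl(\bigcap \mathcal{F}_{\!I,J}\bigr)\setminus I$, two distinct blocks $K_1,K_2$ and $y\in K_1\setminus K_2$, and swapping to produce the block $K' = (K_2\setminus\{x\})\cup\{y\}\in\mathcal{F}_{\!I,J}$, which fails to contain $x$. You instead argue directly, exhibiting for each $a\in J\setminus I$ one block omitting $a$ (possible exactly because $k<j$) and one block containing $a$ (possible exactly because $i<k$). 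Both arguments are elementary; the paper's swap needs only the existence of two distinct blocks and no counting, while yours settles $\bigcap\mathcal{F}_{\!I,J}=I$ and $\bigcup\mathcal{F}_{\!I,J}=J$ in one stroke and makes visible precisely where the hypothesis $i<k<j$ enters. Your systematic use of the binomial trichotomy for parts~\ref{lem:basic:0}--\ref{lem:basic:generic}, the cardinality argument for the refinement in~\ref{lem:basic:size1}, and the reduction of~\ref{lem:basic:1} to~\ref{lem:basic:generic} are all sound.

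Your closing caveat is not a defect of your proof but a genuine (if minor) defect of the lemma as stated: part~\ref{lem:basic:1} is false for $k\in\{0,n\}$ when $n\geq 1$. Indeed, for $k=n$ one has $f_{I,V}=\vek{1}$ for \emph{every} $I\subseteq V$, and for $k=0$ one has $f_{\emptyset,J}=\vek{1}$ for \emph{every} $J\subseteq V$, since in both cases the slice consists of a single block. The paper's blanket \enquote{straightforward to check} passes over this; reading part~\ref{lem:basic:1} under the assumption $0<k<n$, as you propose, is the right repair. The oversight is harmless for the rest of the paper, as the problematic \enquote{only if} direction of part~\ref{lem:basic:1} is never invoked later (the paper only cites parts~\ref{lem:basic:0}, \ref{lem:basic:size1}, and~\ref{lem:basic:generic}).
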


\begin{proof}
	Most statements are straightforward to check.
	We only show the second statement of Part~\ref{lem:basic:generic} for $I$, the one for $J$ is then done analogously.
	By $\#f_{I,J} \geq 2$, there exist two different blocks $K_1, K_2\in\mathcal{F}_{\!I,J}$.
	It is clear that $I \subseteq \bar{I} \coloneqq \bigcap \mathcal{F}_{\!I,J}$.
	Assuming $I \neq \bar{I}$, there exists an $x\in\bar{I} \setminus I$.
	Moreover, there exists a $y\in K_1\setminus K_2$.
	As $\bar{I}$ is a subset of both $K_1$ and $K_2$, $y \notin \bar{I}$.
	Now $K' = (K_2 \setminus \{x\}) \cup\{y\}$ is an element of $\mathcal{F}_{\!I,J}$, but $\bar{I} \nsubseteq K'$, which is a contradiction.
\end{proof}

\newpage

\begin{lemma}[Pascal decomposition]
	\label{lem:basic_pascal_split}
	Let $I \subseteq J\subseteq V$ and $x\in J \setminus I$.%
	\footnote{
		In the case $I = J$, there is no suitable $x$.
	}
	Then $\mathcal{F}_{\!I,J}$ is the disjoint union of $\mathcal{F}_{\!I\cup\{x\},J}$ and $\mathcal{F}_{\!I,J\setminus\{x\}}$.
\end{lemma}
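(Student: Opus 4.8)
The plan is to partition $\mathcal{F}_{I,J}$ according to whether a block contains the distinguished element $x$, and then to identify each of the two resulting parts with one of the claimed basic sets. Since every $K\in\binom{V}{k}$ satisfies exactly one of $x\in K$ or $x\notin K$, this split is automatically disjoint and exhaustive; the only real content is to verify that each part coincides with the asserted basic set. The role of the hypothesis $x\in J\setminus I$ is precisely to make both basic sets on the right-hand side well-formed, so I would keep track of where it is used.

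First I would observe that the blocks of $\mathcal{F}_{I,J}$ containing $x$ are exactly those $K$ with $I\cup\{x\}\subseteq K\subseteq J$, which is by definition $\mathcal{F}_{I\cup\{x\},J}$; here the assumption $x\in J$ guarantees $I\cup\{x\}\subseteq J$, so this basic set makes sense. Second, the blocks of $\mathcal{F}_{I,J}$ not containing $x$ are exactly those $K$ with $I\subseteq K\subseteq J$ and $x\notin K$. Since $x\in J$, the conjunction of $K\subseteq J$ and $x\notin K$ is equivalent to $K\subseteq J\setminus\{x\}$, so these blocks are precisely $\mathcal{F}_{I,J\setminus\{x\}}$, and the assumption $x\notin I$ ensures $I\subseteq J\setminus\{x\}$. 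For the reverse inclusions I would simply note $\mathcal{F}_{I\cup\{x\},J}\subseteq\mathcal{F}_{I,J}$ from $I\subseteq I\cup\{x\}$, and $\mathcal{F}_{I,J\setminus\{x\}}\subseteq\mathcal{F}_{I,J}$ from $J\setminus\{x\}\subseteq J$. Disjointness is immediate, as every member of the first set contains $x$ while no member of the second does.

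I do not expect any genuine obstacle: the argument is a direct set-theoretic case distinction on the membership of $x$, and nothing beyond the definition of $\mathcal{F}_{I,J}$ is required. As a sanity check — and as an explanation of the name — the same decomposition can be read off at the level of sizes via the formula $\#f_{I,J}=\binom{j-i}{k-i}$, where it reduces to Pascal's rule $\binom{j-i}{k-i}=\binom{j-i-1}{k-i-1}+\binom{j-i-1}{k-i}$.
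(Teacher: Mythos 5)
Your proof is correct and follows exactly the paper's argument: the paper's own proof is the one-line observation that $\mathcal{F}_{\!I,J}$ is partitioned according to whether a block contains $x$ or not. Your write-up merely spells out the routine verifications (well-formedness of the two basic sets and the equivalence $K\subseteq J$, $x\notin K$ $\iff$ $K\subseteq J\setminus\{x\}$) that the paper leaves implicit.
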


\begin{proof}
	The elements of $\mathcal{F}_{\!I,J}$ are partitioned into two parts, depending on whether they contain the element $x$ or not.
\end{proof}

\begin{remark}
	The bijective standard proof of Pascal's identity for binomial coefficients is based on the decomposition idea in Lemma~\ref{lem:basic_pascal_split}, which is the reason why we called it \emph{Pascal decomposition}.
	Indeed, for the cardinalities, the decomposition implies the identity
	\[
		\binom{j-i}{k-i} = \binom{j-i-1}{k-i-1} + \binom{j-i-1}{k-i}
	\]
	where $i = \#I$ and $j = \#J$.
\end{remark}

\begin{lemma}\label{lem:basic_elem_mod}
	Let $I \subseteq J\subseteq V$.
	\begin{enumerate}[(a)]
		\item $(f^{(V,k)}_{I,J})^\perp = f^{(V,n-k)}_{J^\complement, I^\complement}$.
		\item For $x\in V$ and $k \geq 1$,
		\[
			\Der_x(f^{(V,k)}_{I,J})
			= \begin{cases}
				f^{(V\setminus\{x\},k-1)}_{I\setminus\{x\},J\setminus\{x\}} & \text{if }x\in I\text{,} \\[2mm]
				\vek{0} & \text{if }x\in V\setminus J\text{,} \\[2mm]
				f^{(V\setminus\{x\},k-1)}_{I,J\setminus\{x\}} & \text{if }x \in J \setminus I\text{.}
			\end{cases}
		\]
		\item For $x\in V$ and $n-k \geq 1$,
		\[
			\Res_x(f^{(V,k)}_{I,J})
			= \begin{cases}
				\vek{0} & \text{if }x\in I\text{,} \\[2mm]
				f^{(V\setminus\{x\},k)}_{I,J} & \text{if }x\in V \setminus J\text{,} \\[2mm]
				f^{(V\setminus\{x\},k)}_{I,J\setminus\{x\}} & \text{if }x \in J\setminus I\text{.}
			\end{cases}
		\]
	\end{enumerate}
\end{lemma}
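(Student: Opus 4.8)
The plan is to verify all three parts by directly unfolding the defining inclusion condition $I \subseteq K \subseteq J$ and tracking how it transforms under each modification; no auxiliary results beyond the definitions of $f_{I,J}$, $f^\perp$, $\Der_x$, and $\Res_x$ are needed.

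For part~(a), I would evaluate the dual at an arbitrary $B \in \binom{V}{n-k}$. By definition $(f_{I,J})^\perp(B) = f_{I,J}(B^\complement)$, which equals $1$ exactly when $I \subseteq B^\complement \subseteq J$. Since complementation reverses inclusions, $I \subseteq B^\complement$ is equivalent to $B \subseteq I^\complement$, and $B^\complement \subseteq J$ is equivalent to $J^\complement \subseteq B$. Thus the condition becomes $J^\complement \subseteq B \subseteq I^\complement$, which is precisely the defining condition of $f^{(V,n-k)}_{J^\complement, I^\complement}$. Alternatively, one may use the polynomial representation: dualization sends $X_a \mapsto 1 - X_a$, turning $\prod_{a \in I} X_a \prod_{b \in J^\complement}(1 - X_b)$ into $\prod_{a \in I}(1 - X_a)\prod_{b \in J^\complement} X_b$, which is exactly the representation of the claimed function on $\binom{V}{n-k}$.

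For parts~(b) and~(c), the natural strategy is a case distinction according to the partition $V = I \sqcup (J \setminus I) \sqcup (V \setminus J)$, which matches the three branches on the right-hand sides. In the derived case, $\Der_x(f_{I,J})(K) = f_{I,J}(K \cup \{x\})$ for $K \in \binom{V \setminus \{x\}}{k-1}$, so $x$ is always present in $K \cup \{x\}$ while absent from $K$. I would check: if $x \in I$, the lower constraint $I \subseteq K \cup \{x\}$ reduces to $I \setminus \{x\} \subseteq K$ and the upper one to $K \subseteq J \setminus \{x\}$; if $x \in V\setminus J$, the inclusion $K \cup \{x\} \subseteq J$ can never hold, giving $\vek{0}$; if $x \in J \setminus I$, the lower constraint becomes $I \subseteq K$ and the upper one $K \subseteq J \setminus \{x\}$. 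The residual case is handled symmetrically, now with $x \notin K$ throughout: $x \in I$ makes $I \subseteq K$ impossible and yields $\vek{0}$, while in the other two cases the ambient ground set simply restricts to $V \setminus \{x\}$ and, precisely when $x \in J$, the set $J$ shrinks to $J \setminus \{x\}$.

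I do not anticipate any genuine obstacle; the entire argument is bookkeeping. The only point requiring care is to consistently exploit that $x \in K \cup \{x\}$ in the derived case and $x \notin K$ in the residual case, so that each inclusion is correctly rewritten as an inclusion inside the smaller ground set $V \setminus \{x\}$ and the correct pair of bounding sets is read off. One should also confirm that the three cases exhaust $V$, so that every block $K$ is accounted for and no branch is omitted.
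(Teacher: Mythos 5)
Your proposal is correct and matches the paper's intent: the paper dismisses this lemma with the single word \enquote{Straightforward}, and your direct unfolding of the inclusion condition $I \subseteq K \subseteq J$ under dualization, derivation, and restriction is precisely the bookkeeping argument being alluded to. All case distinctions and rewritings (including the degenerate $\vek{0}$ branches) are handled correctly.
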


\begin{proof}
	Straightforward.
\end{proof}

\newpage

\section{Elementary properties of paired functions}\label{sec:elem_paired}
We start with two obvious but important properties.

\begin{lemma}[Symmetry]\label{lem:p_symm}
For all disjoint set $I,J\subseteq V$,
\[
	p_{I,J} = p_{J,I}\text{.}
\]
\end{lemma}

\begin{lemma}\label{lem:p_size}
The size of a paired function is
\[
	\#p_{I,J}
	= \#f_{I,J^\complement} + \#f_{J,I^\complement}
	= \binom{n-i-j}{k-i} + \binom{n-i-j}{k-j}\text{.}
\]
\end{lemma}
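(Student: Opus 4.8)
The plan is to exploit the fact that the size functional $f \mapsto \#f = \sum_{K\in\binom{V}{k}} f(K)$ is $\R$-linear, hence additive on sums. Since $p_{I,J}$ is \emph{defined} as $f_{I,J^\complement} + f_{J,I^\complement}$, the first equality
\[
	\#p_{I,J} = \#f_{I,J^\complement} + \#f_{J,I^\complement}
\]
follows immediately, without any appeal to the Boolean structure of $p_{I,J}$. This is important, because (as the footnote to Definition~\ref{def:paired} warns) the size identity must remain valid even in the border cases where $p_{I,J}$ fails to be Boolean; linearity of $\#(\cdot)$ does not care.

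Next I would evaluate each leg separately using the basic-size formula $\#f_{I',J'} = \binom{\#J'-\#I'}{k-\#I'}$ recorded at the start of Section~\ref{sec:elem_basic}. The one point demanding a moment's care is that this formula presupposes a genuine basic function, i.e.\ $I' \subseteq J'$. For the two legs this reduces exactly to the disjointness hypothesis: from $I \cap J = \emptyset$ we get $I \subseteq V\setminus J = J^\complement$ and $J \subseteq V\setminus I = I^\complement$, so both $f_{I,J^\complement}$ and $f_{J,I^\complement}$ are legitimate basic functions and the formula applies.

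It then remains to substitute the cardinalities. For the first leg, $\#I = i$ and $\#J^\complement = n-j$, giving $\binom{(n-j)-i}{k-i} = \binom{n-i-j}{k-i}$; for the second, swapping the roles of $I$ and $J$, $\#J = j$ and $\#I^\complement = n-i$, giving $\binom{(n-i)-j}{k-j} = \binom{n-i-j}{k-j}$. Adding these yields the claimed closed form. I expect no real obstacle here: the binomial coefficients already absorb the degenerate situations (a leg vanishes precisely when the corresponding basic function is $\vek{0}$, and then $\binom{n-i-j}{k-i}$ or $\binom{n-i-j}{k-j}$ is zero under the standard convention $\binom{a}{b}=0$ for $b\notin\{0,\ldots,a\}$), so no separate case analysis for vanishing legs is needed.
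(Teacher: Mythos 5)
Your proof is correct and matches the paper's intent exactly: the paper states this lemma without proof as one of its \enquote{two obvious but important properties}, and the expected justification is precisely what you give -- additivity of the size functional $\#(\cdot)$ applied to the defining sum, followed by the basic-size formula $\#f_{I',J'} = \binom{\#J'-\#I'}{k-\#I'}$, which is legitimate since disjointness of $I$ and $J$ gives $I \subseteq J^\complement$ and $J \subseteq I^\complement$. Your two refinements -- that linearity (rather than disjointness of supports) makes the first equality valid even in the non-Boolean border case $p_{\emptyset,\emptyset} = 2\cdot\vek{1}$, and that the convention $\binom{a}{b}=0$ for $b\notin\{0,\ldots,a\}$ absorbs the cases of empty legs -- are correct and only sharpen the same argument.
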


\begin{lemma}[Pascal decomposition]
	\label{lem:paired_pascal_split}
	Let $I,J\subseteq V$ be disjoint and $x\in V\setminus (I \cup J)$.%
	\footnote{
		In the case $I \cup J = V$, there is no suitable $x$.
	}
	Then
	\[
		p_{I,J} = p_{I\cup \{x\}, J} + p_{I, J\cup\{x\}}\text{.}
	\]
\end{lemma}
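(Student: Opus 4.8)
The plan is to reduce the statement to the basic Pascal decomposition of Lemma~\ref{lem:basic_pascal_split}, applied separately to each of the two legs of $p_{I,J} = f_{I,J^\complement} + f_{J,I^\complement}$, and then to regroup the four resulting basic functions into the two paired functions on the right-hand side. No spectral or degree-theoretic input is needed; the identity already holds at the level of the underlying block sets, so once the legs are split the result follows by matching terms.

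First I would treat the leg $f_{I,J^\complement}$. Since $x\in V\setminus(I\cup J)$, we have $x\notin I$ and $x\in J^\complement$, so $x$ lies in $J^\complement\setminus I$. Hence Lemma~\ref{lem:basic_pascal_split}, applied with lower set $I$ and upper set $J^\complement$, yields the disjoint decomposition $\mathcal{F}_{I,J^\complement} = \mathcal{F}_{I\cup\{x\},J^\complement} \sqcup \mathcal{F}_{I,J^\complement\setminus\{x\}}$, and therefore $f_{I,J^\complement} = f_{I\cup\{x\},J^\complement} + f_{I,J^\complement\setminus\{x\}}$. Symmetrically, for the leg $f_{J,I^\complement}$ one has $x\in I^\complement\setminus J$, so the same lemma gives $f_{J,I^\complement} = f_{J\cup\{x\},I^\complement} + f_{J,I^\complement\setminus\{x\}}$.

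Finally I would regroup. Using $x\notin I$ and $x\notin J$, the complement identities $(I\cup\{x\})^\complement = I^\complement\setminus\{x\}$ and $(J\cup\{x\})^\complement = J^\complement\setminus\{x\}$ hold. Matching terms against Definition~\ref{def:paired} then gives
\[
	f_{I\cup\{x\},J^\complement} + f_{J,I^\complement\setminus\{x\}} = p_{I\cup\{x\},J}
	\quad\text{and}\quad
	f_{I,J^\complement\setminus\{x\}} + f_{J\cup\{x\},I^\complement} = p_{I,J\cup\{x\}}\text{,}
\]
so adding the two leg decompositions reproduces exactly $p_{I\cup\{x\},J} + p_{I,J\cup\{x\}}$. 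The only point requiring care is the bookkeeping of complements -- verifying that $x$ plays the role of an element of $J^\complement\setminus I$ in the first leg and of $I^\complement\setminus J$ in the second -- which is precisely where the hypotheses that $I,J$ are disjoint and that $x\notin I\cup J$ enter. Beyond this there is no genuine obstacle.
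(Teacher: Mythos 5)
Your proof is correct and follows exactly the paper's own argument: apply Lemma~\ref{lem:basic_pascal_split} to each leg of $p_{I,J}$ and regroup the four resulting basic functions, using $(I\cup\{x\})^\complement = I^\complement\setminus\{x\}$ and $(J\cup\{x\})^\complement = J^\complement\setminus\{x\}$, into $p_{I\cup\{x\},J} + p_{I,J\cup\{x\}}$. The only difference is that you spell out the complement bookkeeping that the paper leaves implicit.
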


\begin{proof}
	By Lemma~\ref{lem:basic_pascal_split},
	\begin{multline*}
		p_{I,J}
		= f_{I,J^\complement} + f_{J,I^\complement} \\
		= f_{I \cup\{x\}, J^\complement} + f_{I, (J \cup \{x\})^\complement}
		+ f_{J \cup \{x\}, I^\complement} + f_{J, (I\cup \{x\})^\complement}
		= p_{I \cup \{x\}, J} + p_{I,J\cup \{x\}}\text{.}
	\end{multline*}
\end{proof}

With the exception of
\begin{equation}\label{eq:paired_i=j=0}
	p_{\emptyset, \emptyset} = 2\cdot \vek{1}\text{,}
\end{equation}
all paired functions $p_{I,J}$ are Boolean.
Because, if two disjoint sets $I,J\subseteq V$ are not both empty, say (up to symmetry) $x\in I$, then this $x$ is contained in all blocks of $f_{I,J^\complement}$ and in none of $f_{J,I^\complement}$.
Hence, unless $I = J = \emptyset$, the two legs $\mathcal{F}_{\!I,J^\complement} = \supp(f_{I,J^\complement})$ and $\mathcal{F}_{\!J,I^\complement} = \supp(f_{J,I^\complement})$ of $p_{I,J}$ are disjoint.

\begin{example}\label{ex:small_paired_sets}
Small non-trivial examples of paired sets are
\begin{align*}
	\mathcal{P}^{(\{1,\ldots,6\},3)}_{\emptyset,\{123\}} & = \big\{\{123\},\,\,\{456\}\big\}\text{,}\\
	\mathcal{P}^{(\{1,\ldots,7\},3)}_{\emptyset,\{123\}} & = \big\{\{123\},\{124\},\{134\},\{234\},\,\,\{567\}\big\}\text{,}\\
	\mathcal{P}^{(\{1,\ldots,7\},3)}_{\{1\},\{67\}} & = \big\{\{123\},\{124\},\{125\},\{134\},\{135\},\{145\},\,\,\{267\},\{367\},\{467\},\{567\}\big\}\text{.}
\end{align*}
In the listings, the two legs were visually set slightly apart.

According to Theorem~\ref{thm:paired_deg}, which is yet to be proven, all these sets are of degree $2$.
\end{example}

Let $i = \#I$ and $j = \#J$.
For $\min(i,j) = 0$, we note that
\begin{equation}\label{eq:paired_i=0}
	\mathcal{P}^{(V,k)}_{I,\emptyset}
	= \mathcal{P}^{(V,k)}_{\emptyset, I}
	= \binom{I^\complement}{k} \cup \binom{I^\complement}{n-k}^{\!\complement}\text{.}
\end{equation}
In the subcase $\{i,j\} = \{0,1\}$, which is equivalent to $i + j = 1$, we have $I \cup J = \{x\}$, and the legs of $\mathcal{P}_{I,J}$ are the $k$-subsets of $V$ containing $x$ and the $k$-subsets not containing $x$, respectively.
Hence
\begin{equation}\label{eq:paired_i+j=1}
	p_{\{x\},\emptyset} = p_{\emptyset,\{x\}} = \vek{1} = f_{\emptyset,\emptyset^\complement}\text{.}
\end{equation}

\begin{lemma}\label{lem:p_legs_basic}
	Let $I,J \subseteq V$ be disjoint sets of size $i = \#I$ and $j = \#J$.
	\begin{enumerate}[(a)]
		\item\label{lem:p_legs_basic:first} The first leg $f_{I,J^\complement}$ of $p_{I,J}$ is empty $\iff p_{I,J} = f_{J,I^\complement} \iff k < i \text{ or } j > n-k$.
		\item\label{lem:p_legs_basic:second} The second leg $f_{J,I^\complement}$ of $p_{I,J}$ is empty $\iff p_{I,J} = f_{I,J^\complement} \iff k < j \text{ or } i > n-k$.
	\end{enumerate}
\end{lemma}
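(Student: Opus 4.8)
The plan is to prove the two biconditional chains in Lemma~\ref{lem:p_legs_basic} by relating the emptiness of each leg to Lemma~\ref{lem:basic} (more precisely, to Part~\ref{lem:basic:0}), and by handling the degenerate overlap between the two legs via the analysis of the borderline case $p_{\emptyset,\emptyset}$. By the symmetry $p_{I,J}=p_{J,I}$ from Lemma~\ref{lem:p_symm}, Part~\ref{lem:p_legs_basic:second} follows from Part~\ref{lem:p_legs_basic:first} by swapping the roles of $I$ and $J$, so it suffices to establish Part~\ref{lem:p_legs_basic:first}.

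For the second equivalence in Part~\ref{lem:p_legs_basic:first}, I would apply Lemma~\ref{lem:basic}\ref{lem:basic:0} to the basic function $f_{I,J^\complement}$. Writing $\#(J^\complement)=n-j$, that criterion says $f_{I,J^\complement}=\vek{0}$ if and only if $i>k$ or $n-j<k$, i.e.\ $i>k$ or $j>n-k$, which is exactly the stated condition $k<i$ or $j>n-k$. First I would check that $I\subseteq J^\complement$ holds (so that $f_{I,J^\complement}$ is a legitimate basic function), which is immediate from the disjointness of $I$ and $J$. This gives the equivalence ``$f_{I,J^\complement}=\vek{0}\iff k<i\text{ or }j>n-k$'' directly.

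The first equivalence, ``$f_{I,J^\complement}=\vek{0}\iff p_{I,J}=f_{J,I^\complement}$,'' is where the only subtlety lies, because the definition $p_{I,J}=f_{I,J^\complement}+f_{J,I^\complement}$ makes the forward direction trivial (if the first leg is zero, the sum equals the second leg) but the reverse direction requires knowing that the identity $f_{I,J^\complement}+f_{J,I^\complement}=f_{J,I^\complement}$ genuinely forces $f_{I,J^\complement}=\vek{0}$ rather than merely some cancellation. The hard part is therefore the reverse implication: I need that $p_{I,J}=f_{J,I^\complement}$ implies $f_{I,J^\complement}=\vek{0}$. The key observation is that the two legs are \emph{disjoint} whenever $I$ and $J$ are not both empty, as established in the discussion preceding Example~\ref{ex:small_paired_sets} (if say $x\in I$, then $x$ lies in every block of $\mathcal{F}_{\!I,J^\complement}$ and in no block of $\mathcal{F}_{\!J,I^\complement}$). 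Consequently $p_{I,J}$ is Boolean with support $\mathcal{F}_{\!I,J^\complement}\sqcup\mathcal{F}_{\!J,I^\complement}$, and the equality $p_{I,J}=f_{J,I^\complement}$ of Boolean functions forces the first leg's support to be empty, i.e.\ $f_{I,J^\complement}=\vek{0}$.

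The remaining case to dispatch is $I=J=\emptyset$, where the legs are not disjoint and $p_{\emptyset,\emptyset}=2\cdot\vek{1}$ by~\eqref{eq:paired_i=j=0}. Here the condition ``$k<i$ or $j>n-k$'' reads $k<0$ or $0>n-k$, both false, so the lemma claims the first leg is nonempty; and indeed $f_{\emptyset,V}=\vek{1}\neq\vek{0}$, while $p_{\emptyset,\emptyset}=2\cdot\vek{1}\neq\vek{1}=f_{J,I^\complement}$, so all three conditions in Part~\ref{lem:p_legs_basic:first} are simultaneously false, keeping the chain of equivalences intact. With this boundary case checked separately and the disjointness argument covering all other cases, the full chain of equivalences is assembled by combining the two pieces, and Part~\ref{lem:p_legs_basic:second} is obtained by the symmetry substitution $I\leftrightarrow J$.
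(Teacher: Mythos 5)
Your proof is correct, and its core is the same as the paper's: the paper's entire proof is the observation that the statement follows from Lemma~\ref{lem:basic}\ref{lem:basic:0}, which is exactly your second equivalence. One remark: the ``subtlety'' you identify in the first equivalence is not actually there. The functions involved are real-valued, so they live in the vector space $\R^{\binom{V}{k}}$, and the identity $f_{I,J^\complement}+f_{J,I^\complement}=f_{J,I^\complement}$ forces $f_{I,J^\complement}=\vek{0}$ immediately by subtracting $f_{J,I^\complement}$ from both sides; no cancellation phenomenon is possible. Consequently, your disjointness-of-legs argument and the separate treatment of the boundary case $I=J=\emptyset$ are superfluous (though harmless and correctly executed) --- the first equivalence is trivial in both directions, and the whole lemma reduces to applying Lemma~\ref{lem:basic}\ref{lem:basic:0} to each leg, as the paper does.
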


\begin{proof}
	This follows from Lemma~\ref{lem:basic}\ref{lem:basic:0}.
\end{proof}

\begin{lemma}\label{lem:p_legs}
	Let $I,J\subseteq V$ be disjoint of size $i = \#I$ and $j = \#J$.
	\begin{enumerate}[(a)]
	\item\label{lem:p_legs:empty} $p_{I,J} = \vek{0} \iff \min(i,j) > \min(k,n-k)$ or $\max(i,j) > \max(k,n-k)$.
	\item\label{lem:p_legs:one_empty} $p_{I,J}$ has precisely one non-empty leg\\
	\null\hfill$\iff  \min(i,j) \leq \min(k,n-k) < \max(i,j) \leq \max(k,n-k)$.
	\item\label{lem:p_legs:both_empty} Both legs of $p_{I,J}$ are non-empty $\iff \max(i,j) \leq \min(k,n-k)$.
	\end{enumerate}
\end{lemma}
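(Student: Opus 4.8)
The plan is to reduce all three parts to a count of how many of the two legs $f_{I,J^\complement}$ and $f_{J,I^\complement}$ are non-empty. Since both legs are Boolean, hence non-negative, their sum $p_{I,J}$ can vanish at a block $K$ only if both legs do; consequently $p_{I,J} = \vek{0}$ exactly when both legs are empty, and more generally parts (a), (b), (c) correspond precisely to $0$, $1$, and $2$ non-empty legs. I would first verify that the three right-hand conditions are mutually exclusive and exhaustive: writing $\mu = \min(i,j)$, $\nu = \max(i,j)$, $m = \min(k,n-k)$, $M = \max(k,n-k)$, condition (c) reads $\nu \leq m$, condition (b) reads $\mu \leq m < \nu \leq M$, and condition (a) reads $\mu > m$ or $\nu > M$; a short inspection shows these partition the set of all quadruples with $\mu \leq \nu$ and $m \leq M$. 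Hence it suffices to match the leg-count to the correct region, for which I need only two characterizations.

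The engine is Lemma~\ref{lem:p_legs_basic}, which I read contrapositively as: the first leg is non-empty iff $i \leq k$ and $j \leq n-k$, and the second leg is non-empty iff $i \leq n-k$ and $j \leq k$. Part~(c) then falls out with no case distinction, since both legs are non-empty iff all four inequalities hold simultaneously, i.e. $i \leq \min(k,n-k)$ and $j \leq \min(k,n-k)$, which is exactly $\max(i,j) \leq \min(k,n-k)$.

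For parts (a) and (b) I would establish the single auxiliary claim that at least one leg is non-empty iff $\mu \leq m$ and $\nu \leq M$. Because the property of having at least one non-empty leg is invariant under swapping $I$ and $J$ (which merely interchanges the two legs, by Lemma~\ref{lem:p_symm}), I may assume $i \leq j$, so $\mu = i$ and $\nu = j$, and then split on whether $k \leq n-k$. If $k \leq n-k$, then $m = k$ and $M = n-k$, and the first-leg condition $i \leq k$, $j \leq n-k$ is precisely $\mu \leq m$, $\nu \leq M$; if $k > n-k$, the second-leg condition plays the same role. The converse implications follow in the same case split using $i \leq j$. Granting this claim, part~(a) is its negation $\mu > m$ or $\nu > M$, while part~(b) is obtained by intersecting "at least one leg" ($\mu \leq m$ and $\nu \leq M$) with the failure of "both legs" ($\nu > m$), giving $\mu \leq m < \nu \leq M$.

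The argument is essentially bookkeeping, so there is no deep obstacle; the only points requiring care are the boundary case $p_{\emptyset,\emptyset} = 2\cdot\vek{1}$, where both legs equal $\vek{1}$ and condition (c) correctly applies since $\max(i,j) = 0 \leq \min(k,n-k)$, so that the equivalence "$p_{I,J} = \vek{0}$ iff both legs empty" stays watertight despite the non-disjoint support there (it uses only non-negativity of the legs, not disjointness), and keeping the case distinction $k \leq n-k$ versus $k > n-k$ aligned with the assumption $i \leq j$ so that the $\min$/$\max$ bookkeeping does not slip.
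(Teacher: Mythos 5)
Your proof is correct and follows essentially the same route as the paper, whose entire proof is the remark that the lemma ``follows from a combination of both statements of Lemma~\ref{lem:p_legs_basic}''; you have simply spelled out the bookkeeping (counting non-empty legs via the contrapositive of that lemma, plus the observation that a sum of non-negative legs vanishes iff both legs do) that the paper leaves implicit.
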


\begin{proof}
This follows from a combination of both statements of Lemma~\ref{lem:p_legs_basic}.
\end{proof}

\begin{lemma}\label{lem:paired_size_1}
Let $I,J\subseteq V$ be disjoint of size $i = \#I$ and $j = \#J$.
Then
\[
	\#p_{I,J} = 1 \iff \begin{cases} \min(i,j) = \min(k,n-k) < \max(i,j) \leq \max(k,n-k)\quad \text{or}\\\min(i,j) \leq \min(k,n-k) < \max(i,j) = \max(k,n-k)\text{.}\end{cases}
\]
Note that the middle \enquote{$<$} in the second condition is equivalent to $n \neq 2k$.
\end{lemma}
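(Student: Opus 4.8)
The plan is to use that the size of a paired function is the sum of the sizes of its two legs, $\#p_{I,J} = \#f_{I,J^\complement} + \#f_{J,I^\complement}$ (Lemma~\ref{lem:p_size}), a sum of two non-negative integers; hence $\#p_{I,J} = 1$ holds precisely when one leg is empty and the other has size~$1$. Before analysing this, I would make two reductions. Since $p_{I,J} = p_{J,I}$ (Lemma~\ref{lem:p_symm}) and the right-hand side depends on $i,j$ only through $\min(i,j)$ and $\max(i,j)$, I may assume $i \le j$. Moreover, replacing $k$ by $n-k$ leaves the right-hand side unchanged (it sees $k$ only via $\min(k,n-k)$ and $\max(k,n-k)$) and, by the binomial symmetry $\binom{n-i-j}{(n-k)-i} = \binom{n-i-j}{k-j}$, merely interchanges the two summands in Lemma~\ref{lem:p_size}; so $\#p_{I,J}$ is unchanged too, and I may also assume $k \le n-k$, abbreviating $m = k = \min(k,n-k)$ and $M = n-k = \max(k,n-k)$.

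Next I would split according to the number of non-empty legs, as classified in Lemma~\ref{lem:p_legs}. If both legs are empty, then $\#p_{I,J} = 0$; if both are non-empty, then $\#p_{I,J} \ge 2$, because the legs are disjoint whenever $(I,J) \ne (\emptyset,\emptyset)$, while $p_{\emptyset,\emptyset} = 2\cdot\vek{1}$ has size $2\binom{n}{k} \ge 2$ by~\eqref{eq:paired_i=j=0}. Thus $\#p_{I,J} = 1$ forces exactly one non-empty leg, which by Lemma~\ref{lem:p_legs}\ref{lem:p_legs:one_empty} (together with $i \le j$) is equivalent to $i \le m < j \le M$. In particular, if $n = 2k$ then $m = M$, so this chain of inequalities is unsatisfiable and $\#p_{I,J} \ne 1$; this matches the statement, whose two clauses both become vacuous when $n = 2k$.

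It then remains to decide, under $i \le m < j \le M$, when the surviving leg has size~$1$. Here $j > m = k$ forces $f_{J,I^\complement}$ to be empty by Lemma~\ref{lem:basic}\ref{lem:basic:0}, so the non-empty leg is $f_{I,J^\complement}$, a basic set on $I \subseteq J^\complement$ of sizes $i$ and $n-j$. By Lemma~\ref{lem:basic}\ref{lem:basic:size1} it has size~$1$ iff $i = k$ or $n-j = k$, that is, $i = m$ or $j = M$. Feeding this back into $i \le m < j \le M$ yields exactly the two displayed alternatives: $i = m$ gives $\min(i,j) = m < \max(i,j) \le M$, and $j = M$ gives $\min(i,j) \le m < \max(i,j) = M$. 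This establishes the equivalence in the normalized regime, and the two reductions transfer it to the general case.

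I expect no deep obstacle; the care is concentrated in two places. First, the $k \leftrightarrow n-k$ reduction must be justified by the leg-swapping binomial identity, after which working in the regime $k \le n-k$ pins down which leg survives. Second, I would verify that the two boundary equalities $i = m$ and $j = M$ map cleanly onto the two clauses of the statement and that their degenerate overlap at $n = 2k$ is precisely the case excluded by the strict middle inequality ``$<$'' — confirming the remark following the statement that this ``$<$'' is equivalent to $n \ne 2k$.
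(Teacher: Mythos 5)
Your proposal is correct and takes essentially the same route as the paper's own (much terser) proof: reduce $\#p_{I,J}=1$ to ``one leg empty, the other of size $1$'', then combine Lemma~\ref{lem:p_legs}\ref{lem:p_legs:one_empty} with Lemma~\ref{lem:basic}\ref{lem:basic:size1}. Your WLOG normalizations $i \le j$ and $k \le n-k$ (the latter justified by the leg-swapping binomial identity, which the paper could equally get from Lemma~\ref{lem:paired_elem_mod}\ref{lem:paired_elem_mod:dual} and $\#f^\perp = \#f$) are just explicit bookkeeping for the same argument.
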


\begin{proof}
	The property $\#p_{I,J} = 1$ is equivalent to one leg being empty and the other leg having size $1$.
	Now use Lemma~\ref{lem:p_legs}\ref{lem:p_legs:one_empty} and~\ref{lem:basic}\ref{lem:basic:size1}.
\end{proof}

\begin{lemma}\label{lem:paired_size_2}
Let $I,J\subseteq V$ be disjoint of size $i = \#I$ and $j = \#J$.
Then
\[
	\#p_{I,J} = 2
	\iff \begin{cases}
		i = j = \min(k,n-k)\quad\text{or} \\
		n=2k\text{ and }\max(i,j) = k\text{ and }i\neq j\quad\text{or} \\
		n\notin\{2k-1,\,2k,\,2k+1\} \text{ and } \{i,j\} = \{k-1,\, n-k-1\}\text{.}
	\end{cases}
\]
The second case can be rewritten as $\min(i,j) < \min(k,n-k) = \max(k,n-k) = \max(i,j)$ and the third as $\min(i,j) + 1 = \min(k,n-k) < \max(k,n-k)-1 = \max(i,j)$.
\end{lemma}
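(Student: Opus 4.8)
The plan is to argue directly from the size formula $\#p_{I,J} = \binom{n-i-j}{k-i} + \binom{n-i-j}{k-j}$ of Lemma~\ref{lem:p_size}, which holds for \emph{all} disjoint $I,J$ (including the non-Boolean border case $p_{\emptyset,\emptyset}$), so that no separate treatment of Boolean versus non-Boolean paired functions is needed. Abbreviating $m = n-i-j$, $a = k-i$ and $b = k-j$, I note $m \ge 0$ since $i + j = \#(I \cup J) \le n$. The only arithmetic input I would isolate as a preliminary observation is the elementary fact that a binomial coefficient $\binom{m}{c}$, with $m \ge 0$ and $c \in \Z$, equals $0$ exactly when $c < 0$ or $c > m$, equals $1$ exactly when $c \in \{0,m\}$, equals $2$ exactly when $m = 2$ and $c = 1$, and is at least $3$ otherwise. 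Since $\binom{m}{a}$ and $\binom{m}{b}$ are non-negative integers, $\#p_{I,J} = 2$ holds if and only if either both equal $1$, or one of them is $0$ and the other is $2$.

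For the first alternative I would translate the conditions via $a = 0 \Leftrightarrow i = k$, $a = m \Leftrightarrow j = n-k$, $b = 0 \Leftrightarrow j = k$ and $b = m \Leftrightarrow i = n-k$, and run through the four combinations $(a,b) \in \{0,m\}^2$. The choice $(0,0)$ forces $i = j = k$ and is admissible only if $m = n-2k \ge 0$, i.e.\ $\min(k,n-k) = k$; the choice $(m,m)$ forces $i = j = n-k$ and requires $n \le 2k$, i.e.\ $\min(k,n-k) = n-k$; together these give precisely $i = j = \min(k,n-k)$, the first case of the claim. The choices $(0,m)$ and $(m,0)$ both force $k = n-k$, hence $n = 2k$, with $i = k$ resp.\ $j = k$ and the remaining parameter in $\{0,\ldots,k\}$; this is exactly $n = 2k$ with $\max(i,j) = k$, where $i = j$ falls back into the first case and $i \neq j$ is the second case of the claim.

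For the second alternative, I would use the symmetry $p_{I,J} = p_{J,I}$ of Lemma~\ref{lem:p_symm} (equivalently the symmetry $a \leftrightarrow b$) to assume $\binom{m}{b} = 2$ and $\binom{m}{a} = 0$. The first equality forces $m = 2$ and $b = 1$, hence $j = k-1$ and, from $n - i - j = m = 2$, also $i = n-k-1$, so $\{i,j\} = \{k-1,\,n-k-1\}$. The second equality $\binom{2}{a} = 0$ with $a = k-i = 2k-n+1$ holds exactly when $a < 0$ or $a > 2$, i.e.\ when $n \ge 2k+2$ or $n \le 2k-2$, which is precisely $n \notin \{2k-1,2k,2k+1\}$. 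This is the third case of the claim, and the opposite sub-choice $\binom{m}{a} = 2$ yields the same set $\{i,j\}$ by symmetry.

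Collecting the two alternatives gives the stated equivalence. Verifying the two reformulations at the end is then routine: $\min(k,n-k) = \max(k,n-k)$ is equivalent to $n = 2k$, turning the second case into $\min(i,j) < \min(k,n-k) = \max(k,n-k) = \max(i,j)$; and $\{i,j\} = \{k-1,n-k-1\}$ with $|2k-n| \ge 2$ becomes $\min(i,j)+1 = \min(k,n-k) < \max(k,n-k)-1 = \max(i,j)$. I expect the main obstacle to be organizational rather than substantive: tracking the admissibility constraints ($m \ge 0$ and the index ranges) uniformly across the subcases, and correctly handling the seam at $n = 2k$, where the first and second cases of the claim overlap.
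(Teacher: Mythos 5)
Your proof is correct and takes essentially the same route as the paper's: both rest on decomposing $\#p_{I,J}=2$ into leg sizes $1+1$ or $0+2$, since the two summands $\binom{n-i-j}{k-i}$ and $\binom{n-i-j}{k-j}$ in Lemma~\ref{lem:p_size} are exactly the sizes of the two legs. The only difference is execution: you resolve each case by elementary binomial-coefficient arithmetic, whereas the paper invokes its structural lemmas on small basic sets and empty legs (Lemma~\ref{lem:basic}, Lemma~\ref{lem:p_legs}); your variant has the minor virtue of handling the non-Boolean border case $p_{\emptyset,\emptyset}$ uniformly.
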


\begin{proof}
	The property $\#p_{I,J} = 2$ is equivalent to either having two legs of size 1 (resulting in the first and second case) or one empty leg and one of size 2 (the third case).
\end{proof}

\begin{lemma}\label{lem:i=j=k}
	Let $K_1, K_2$ be disjoint subsets of $V$ of size $k$ and let $g = \chi_{K_1} + \chi_{K_2}$.
	\begin{enumerate}[(a)]
	\item\label{lem:i=j=k:double} $g = 2\cdot \vek{1}$ if and only if $k=0$.
	\item\label{lem:i=j=k:full} $g = \vek{1}$ if and only if $n=2$ and $k=1$.
	\item\label{lem:i=j=k:basic} $g$ is a basic function if and only if $k=1$.
	In this case, $g = f_{\emptyset, K_1 \cup K_2}$ is the unique representation of $g$ as a basic function.
	\item\label{lem:i=j=k:paired} $g$ is always a paired function.
	The pairs $\{I,J\}$ ($I,J\subseteq V$ disjoint) with $g = p_{I,J}$ are precisely the following:
	\begin{itemize}
		\item $\{K_1,K_2\}$,
		\item for $k=1$ and $n \geq 4$, additionally $\{\emptyset, (K_1 \cup K_2)^\complement\}$,
		\item for $n = 2k$, additionally all $\{K_1,J\}$ and $\{I,K_2\}$ with $I \subseteq K_1$ and $J\subseteq K_2$.
	\end{itemize}
	\end{enumerate}
\end{lemma}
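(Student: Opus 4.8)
The plan is to treat the four parts in turn, resting on the standing observation that two disjoint $k$-subsets $K_1,K_2\subseteq V$ can only coexist when $2k\le n$; hence throughout $\min(k,n-k)=k$ and $\max(k,n-k)=n-k$, while for $k\ge1$ the blocks are distinct, so $g$ is Boolean with $\supp(g)=\{K_1,K_2\}$ and $\#g=2$. The degenerate case $k=0$ forces $K_1=K_2=\emptyset$ and $g=2\cdot\vek1$, which I would dispose of separately wherever it occurs. Parts~(a) and~(b) then follow by pointwise evaluation: since $\chi_{K_1}+\chi_{K_2}$ attains the value $2$ only on a block equal to both $K_1$ and $K_2$, the equality $g=2\cdot\vek1$ forces $K_1=K_2$ and $\binom{V}{k}=\{K_1\}$, i.e.\ $k=0$, with the converse immediate; and $g=\vek1$ (which needs $k\ge1$) amounts to $\{K_1,K_2\}$ exhausting $\binom{V}{k}$, i.e.\ $\binom{n}{k}=2$, whose only solution in the range $0\le k\le n$ is $(n,k)=(2,1)$.

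For Part~(c) I assume $k\ge1$, the case $k=0$ being excluded since $g=2\cdot\vek1$ is not Boolean. If $g=f_{I,J}$ is basic, then $\mathcal{F}_{\!I,J}=\{K_1,K_2\}$ has size~$2$, so Lemma~\ref{lem:basic}\ref{lem:basic:generic} forces $I=\bigcap\{K_1,K_2\}=\emptyset$ and $J=\bigcup\{K_1,K_2\}=K_1\cup K_2$ and at the same time yields uniqueness. It then remains to note that $\mathcal{F}_{\!\emptyset,K_1\cup K_2}$ is the set of all $k$-subsets of the $2k$-set $K_1\cup K_2$, of size $\binom{2k}{k}$, which equals~$2$ exactly when $k=1$; conversely, for $k=1$ one checks directly that $g=f_{\emptyset,K_1\cup K_2}$.

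Part~(d) is the heart of the statement. That $g$ is paired is immediate: as $\#K_1=\#K_2=k$, Lemma~\ref{lem:basic}\ref{lem:basic:size1} gives $f_{K_1,K_2^\complement}=\chi_{K_1}$ and $f_{K_2,K_1^\complement}=\chi_{K_2}$, whence $p_{K_1,K_2}=g$ (and $p_{\emptyset,\emptyset}=2\cdot\vek1=g$ when $k=0$ by~\eqref{eq:paired_i=j=0}). For the classification, the driving idea is to first restrict the sizes $i=\#I$, $j=\#J$: any pair with $p_{I,J}=g$ satisfies $\#p_{I,J}=\#g=2$, so Lemma~\ref{lem:paired_size_2} confines $(i,j)$ to exactly three shapes, which under $2k\le n$ read $i=j=k$; or $n=2k$ with $\max(i,j)=k$ and $i\ne j$; or $n\ge2k+2$ with $\{i,j\}=\{k-1,\,n-k-1\}$. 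In each shape the extreme block sizes make the legs explicit via Lemma~\ref{lem:basic}\ref{lem:basic:size1}: $\#I=k$ gives $f_{I,J^\complement}=\chi_I$, and (only when $n=2k$) $\#I^\complement=k$ gives $f_{J,I^\complement}=\chi_{I^\complement}$, while in the third shape the longer set has size exceeding~$k$, so one leg vanishes by Lemma~\ref{lem:basic}\ref{lem:basic:0}. Thus $p_{I,J}$ equals $\chi_I+\chi_J$, or $\chi_I+\chi_{I^\complement}$, or a single basic leg of size~$2$, respectively.

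It remains to match each resulting support against $\{K_1,K_2\}$. The first shape gives $\supp=\{I,J\}=\{K_1,K_2\}$, i.e.\ the pair $\{K_1,K_2\}$. In the second, $n=2k$ forces $K_2=K_1^\complement$, so $\supp=\{I,I^\complement\}=\{K_1,K_2\}$ precisely when $I\in\{K_1,K_2\}$, yielding (after absorbing the first shape as the $J=K_2$ resp.\ $I=K_1$ member) all $\{K_1,J\}$ with $J\subseteq K_2$ and all $\{I,K_2\}$ with $I\subseteq K_1$. In the third shape $p_{I,J}$ is basic, so by Part~(c) it can equal $g$ only for $k=1$, where it pins down $J^\complement=K_1\cup K_2$ and hence the pair $\{\emptyset,(K_1\cup K_2)^\complement\}$, valid exactly for $n\ge4$. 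The symmetry $p_{I,J}=p_{J,I}$ of Lemma~\ref{lem:p_symm} absorbs the two orderings in each shape. The main obstacle is precisely this bookkeeping: one must verify that the three shapes overlap only in the expected way and that the boundary $n=2k+1$ contributes no extra family; the virtue of routing everything through Lemma~\ref{lem:paired_size_2} is that configurations like $i=j=k-1$ (which would otherwise look tempting) are ruled out at once, since they have size~$4$ rather than~$2$.
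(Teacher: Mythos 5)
Your proof is correct, and parts (a)--(c) essentially coincide with the paper's treatment (the paper dismisses (a) and (b) as routine and proves (c) exactly as you do, via Lemma~\ref{lem:basic}\ref{lem:basic:generic} and $\binom{2k}{k}=2\iff k=1$). In part (d), however, your organization is genuinely different. The paper splits according to whether a leg of $p_{I,J}$ is empty: an empty leg makes $g$ basic, so part (c) gives $k=1$ and the pair $\{\emptyset,(K_1\cup K_2)^\complement\}$; if both legs are non-empty, each is a basic set of size $1$, and applying Lemma~\ref{lem:basic}\ref{lem:basic:size1} to both legs simultaneously produces four cases ($I=K_1$ or $J^\complement=K_1$, combined with $J=K_2$ or $I^\complement=K_2$) that collapse to the three listed families. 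You instead constrain the sizes $(i,j)$ first, via the size classification in Lemma~\ref{lem:paired_size_2}, and only afterwards identify the legs inside each of the three resulting shapes. The two decompositions cover the same ground -- your third shape is the paper's empty-leg case, and your first two shapes correspond to its four-case analysis -- but each buys something: routing through Lemma~\ref{lem:paired_size_2} disposes of spurious configurations (such as $i=j=k-1$) and of the boundary $n=2k+1$ before any leg analysis, while the paper's argument needs no size lemma at all and is slightly shorter. A side benefit of your version: your proof of (c) is cleaner than the paper's, whose printed argument for (c) contains a digression about an empty \enquote{second leg} and a condition $n>3$ that really belongs to the empty-leg case of part (d) rather than to the characterization of basic-ness.
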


\begin{proof}
	For $k=0$, the statements are easy to verify, so we may assume that $g$ is Boolean.

	Part~\ref{lem:i=j=k:double} and~\ref{lem:i=j=k:full} are straightforward to check.

	Part~\ref{lem:i=j=k:basic}:
	For \enquote{$\Rightarrow$}, let $g = f_{I,J}$ be basic with disjoint sets $I,J\subseteq V$.
	By Lemma~\ref{lem:basic}\ref{lem:basic:generic} and $\#g = 2$, we obtain $I = \bigcap \supp(g) = K_1 \cap K_2 = \emptyset$.
	Hence, $2 = \#g = \#f_{\emptyset,J} = \binom{\#J}{k}$ and therefore $\#J = 2$ and $k=1$.
	The second leg $f_{J^\complement, V}$ must be empty, such that Lemma~\ref{lem:basic}\ref{lem:basic:0} implies $n-2 = \#J^\complement > k = 1$, i.\,e. $n > 3$.
	For \enquote{$\Leftarrow$}, let $k = 1$ and $n \geq 4$.
	Then, clearly, $g = f_{\emptyset, K_1 \cup K_2}$, and by Lemma~\ref{lem:basic}\ref{lem:basic:generic}, this representation as a basic function is unique.

	Part~\ref{lem:i=j=k:paired}:
	The direction \enquote{$\Leftarrow$} is routinely checked.
	For \enquote{$\Rightarrow$}, let $g = p_{I,J}$ be a paired function with $I,J\subseteq V$ disjoint.
	If a leg of $g$ is empty, $g$ is a basic function.
	Part~\ref{lem:i=j=k:basic} implies $k=1$, and the unique representation yields the pair $\{I,J\} = \{\emptyset, (K_1 \cup K_2)^\complement\}$.
	If both legs of $g$ are non-empty, they are basic functions of size $1$.
	The application of Lemma~\ref{lem:basic}\ref{lem:basic:size1} to both legs implies, up to swapping $I$ and $J$, that
	\[
		(I = K_1 \text{ or }J^\complement = K_1)\; \text{ and }\;(J = K_2 \text{ or }I^\complement = K_2)\text{.}
	\]
	Hence, we are in one of these four cases:
	\begin{align*}
		I = K_1 &\;\text{ and }\; J = K_2\text{,} &
		I = K_1 &\;\text{ and }\; I = K_2^\complement\text{,} \\
		J = K_1^\complement &\;\text{ and }\; J = K_2\text{,} &
		J = K_1^\complement &\;\text{ and }\; I = K_2^\complement\text{.}
	\end{align*}
	The first case is $\{I,J\} = \{K_1,K_2\}$.
	The remaining three cases are only possible for $n = 2k$, implying that $K_2 = K_1^\complement$, where for the fourth case we used that $I$ and $J$ are disjoint.
	The second case gives $\{I,J\} = \{K_1,J\}$ with $I$ and $J$ disjoint, i.\ e. $J \subseteq K_2$.
	The third case yields $\{I,J\} = \{I,K_2\}$ with $I \subseteq K_1$, and the fourth case once again $\{I,J\} = \{K_1,K_2\}$.
\end{proof}

\begin{lemma}\label{lem:paired:meet_join}
	Let $I,J\subseteq V$ be disjoint of size $i = \#I$ and $j = \#J$ such that both legs of $p_{I,J}$ are non-empty, i.\,e. such that $\max(i,j) \leq \min(k,n-k)$.
	Then
	\[
		\bigcap \mathcal{P}_{I,J}
		= \begin{cases}
			(I\cup J)^\complement & \text{if }i=j=n-k\text{,} \\
			\emptyset & \text{otherwise}
		\end{cases}
		\qquad\text{and}\qquad
		\bigcup \mathcal{P}_{I,J}
		= \begin{cases}
			I\cup J & \text{if }i=j=k\text{,} \\
			V & \text{otherwise.}
		\end{cases}
	\]
	For $n = 2k$, we get that, in particular, $\bigcap \mathcal{P}_{I,J} = \emptyset$ and $\bigcup \mathcal{P}_{I,J} = V$.
\end{lemma}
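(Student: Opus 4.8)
The plan is to reduce the computation of $\bigcap \mathcal{P}_{I,J}$ and $\bigcup \mathcal{P}_{I,J}$ to the analogous quantities for the two legs, and then to determine the intersection and union of a single basic set. Since $\mathcal{P}_{I,J} = \mathcal{F}_{\!I,J^\complement} \cup \mathcal{F}_{\!J,I^\complement}$ by definition, and both families are non-empty by hypothesis (Lemma~\ref{lem:p_legs}\ref{lem:p_legs:both_empty}), the elementary set-theoretic identities
\[
	\bigcap \mathcal{P}_{I,J} = \Big(\bigcap \mathcal{F}_{\!I,J^\complement}\Big) \cap \Big(\bigcap \mathcal{F}_{\!J,I^\complement}\Big)
	\quad\text{and}\quad
	\bigcup \mathcal{P}_{I,J} = \Big(\bigcup \mathcal{F}_{\!I,J^\complement}\Big) \cup \Big(\bigcup \mathcal{F}_{\!J,I^\complement}\Big)
\]
hold.

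Next I would establish a short sub-claim for a single basic set: for $L \subseteq U \subseteq V$ with $\#L \leq k \leq \#U$, the set $\mathcal{F}_{\!L,U}$ is non-empty, $\bigcap \mathcal{F}_{\!L,U}$ equals $L$ if $k < \#U$ and $U$ if $k = \#U$, while $\bigcup \mathcal{F}_{\!L,U}$ equals $U$ if $\#L < k$ and $L$ if $\#L = k$. The generic subcase $\#L < k < \#U$ (where $\#\mathcal{F}_{\!L,U} \geq 2$) is exactly Lemma~\ref{lem:basic}\ref{lem:basic:generic}, and the two boundary subcases $\#L = k$ or $k = \#U$ force $\mathcal{F}_{\!L,U}$ to consist of the single block $L$ resp.\ $U$ (Lemma~\ref{lem:basic}\ref{lem:basic:size1}), for which intersection and union coincide with that block.

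Now I would apply this to the two legs, using $I \subseteq J^\complement$ and $J \subseteq I^\complement$ (disjointness) together with the hypothesis $\max(i,j) \leq \min(k,n-k)$, which guarantees $i,j \leq k$ and $k \leq \min(n-i, n-j)$, so that both legs satisfy the size condition of the sub-claim. This yields $\bigcap \mathcal{F}_{\!I,J^\complement} \in \{I, J^\complement\}$ (the latter exactly when $j = n-k$) and $\bigcap \mathcal{F}_{\!J,I^\complement} \in \{J, I^\complement\}$ (the latter exactly when $i = n-k$), with the symmetric statements for the unions, where the alternative value occurs exactly when $i = k$ resp.\ $j = k$. Substituting into the two identities above and using $I \cap J = \emptyset$, $J^\complement \cap J = \emptyset$, $I \cap I^\complement = \emptyset$, $J^\complement \cap I^\complement = (I \cup J)^\complement$ for the intersection (and the dual relations $I \cup I^\complement = V$, $J \cup J^\complement = V$, $I^\complement \cup J^\complement = (I \cap J)^\complement = V$ for the union) gives a four-way case distinction that collapses to $(I\cup J)^\complement$ only when $i = j = n-k$ and to $I \cup J$ only when $i = j = k$.

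Finally, for the displayed $n = 2k$ specialization I would observe that $k = n-k$ forces each exceptional case to coincide with $i = j = k = n-k$; since $I, J$ are then disjoint of size $k$, we get $\#(I\cup J) = i+j = 2k = n$, hence $(I\cup J)^\complement = \emptyset$ and $I \cup J = V$, so the exceptional values agree with the generic ones. I expect the only delicate point to be the bookkeeping of the boundary subcases in which a leg degenerates to a single block, so that the sub-claim is invoked with the correct alternative; everything else is a routine substitution.
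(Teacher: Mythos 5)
Your proof is correct, but it takes a genuinely different route from the paper's. You distribute the meet and join over the two legs -- using that $\bigcap(\mathcal{A}\cup\mathcal{B}) = (\bigcap\mathcal{A})\cap(\bigcap\mathcal{B})$ and $\bigcup(\mathcal{A}\cup\mathcal{B}) = (\bigcup\mathcal{A})\cup(\bigcup\mathcal{B})$ for non-empty families -- then compute each leg's intersection and union via a sub-claim that merges Lemma~\ref{lem:basic}\ref{lem:basic:size1} and~\ref{lem:basic}\ref{lem:basic:generic} (covering the degenerate single-block legs), and finish with a four-way case analysis in which the exceptional answers $(I\cup J)^\complement$ and $I\cup J$ emerge only when both legs degenerate simultaneously. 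The paper instead argues directly on $\mathcal{P}_{I,J}$: it first disposes of the two exceptional cases $i=j=k$ and $i=j=n-k$, where $\mathcal{P}_{I,J}$ is an explicit pair of blocks, and then, assuming without loss of generality $i<k$, proves $\bigcup\mathcal{P}_{I,J}=V$ by element chasing -- each $x\in I\cup J$ lies in a block because both legs are non-empty, and each $x\notin I\cup J$ lies in a block of $\mathcal{F}_{\!I,J^\complement}$ because $I\cup\{x\}$ can be completed to a $k$-set inside $J^\complement$ -- with the intersection statement handled dually. Your version is more mechanical and uniform: all cases, including the exceptional ones, fall out of the same substitution, and the covering argument is outsourced to the already-proved Lemma~\ref{lem:basic}\ref{lem:basic:generic} rather than redone inline; the paper's version is shorter and avoids the bookkeeping of boundary subcases, at the cost of treating the special cases ad hoc. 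Your verification of the hypotheses ($i\le k\le n-j$ and $j\le k\le n-i$ from $\max(i,j)\le\min(k,n-k)$, plus disjointness) is exactly what is needed, so there is no gap.
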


\begin{proof}
	For $i=j=k$, $p_{I,J} = \{I,J\}$ and for $i=j=n-k$, $p_{I,J} = \{I^\complement, J^\complement\}$.
	This verifies the stated special cases.

	In the case that $i=j=k$ is false, up to swapping $I$ and $J$, we may assume $i < k$.
	We show that all $x\in V$ are contained in some block of $\mathcal{P}_{I,J}$, implying that $\bigcup\mathcal{P}_{I,J} = V$.
	For $x\in I \cup J$, this follows from the assumption that both legs are non-empty.
	Furthermore, by $i < k$, each $x\in V\setminus (I \cup J)$ is contained in some block of the leg $\mathcal{F}_{\!I,J^\complement}$.

	In the case that $i=j=n-k$ is false, $\bigcap\mathcal{P}_{I,J} = \emptyset$ is shown similarly.
\end{proof}

\begin{lemma}\label{lem:paired:full}
	$p_{I,J} = \vek{1}$ precisely in the following cases.
	\begin{enumerate}[(i)]
		\item\label{lem:paired:full:i+j=1} $i + j = 1$;
		\item\label{lem:paired:full:k=i=j=1} $n=2$ and $k=i=j=1$;
		\item\label{lem:paired:full:k=0} $k \in\{0,n\}$ and $\min(i,j) = 0$ and $\max(i,j) \neq 0$.
	\end{enumerate}
\end{lemma}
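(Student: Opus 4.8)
The plan is to characterize $p_{I,J} = \vek 1$ by analyzing when every block $K\in\binom Vk$ lies in at least one of the two legs $\mathcal F_{I,J^\complement}$ and $\mathcal F_{J,I^\complement}$. Using the symmetry $p_{I,J} = p_{J,I}$ of Lemma~\ref{lem:p_symm}, I would assume throughout that $i\le j$. First I would dispose of the degenerate pair $I=J=\emptyset$: here \eqref{eq:paired_i=j=0} gives $p_{\emptyset,\emptyset} = 2\cdot\vek 1 \ne \vek 1$ (as $\binom nk\ge 1$), so this pair is correctly excluded by all of \ref{lem:paired:full:i+j=1}--\ref{lem:paired:full:k=0}. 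For every other pair the two legs are disjoint, so $p_{I,J}$ is Boolean and $p_{I,J}=\vek 1$ is equivalent to $\mathcal P_{I,J}=\binom Vk$, i.e.\ to the \emph{covering} condition that no block lies outside both legs.

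For the forward direction I would split according to $\min(i,j)$. If both sets are non-empty, I would pick $x\in I$ and $y\in J$ and observe that a block containing both $x$ and $y$ lies in neither leg; such a block exists in $\binom Vk$ as soon as $k\ge 2$. Dually, a block avoiding both $x$ and $y$ also lies in neither leg, and exists whenever $k\le n-2$. Hence the covering condition forces $k\le 1$ and $k\ge n-1$, so $n\le 2$; combined with $i,j\ge 1$ and disjointness this leaves only $n=2$, $k=i=j=1$, which is case~\ref{lem:paired:full:k=i=j=1}. If instead $\min(i,j)=0$, say $I=\emptyset$, the legs simplify to $\binom{J^\complement}{k}$ and the $j$-pencil $\{K\mid J\subseteq K\}$, so a block is covered exactly when $K\cap J=\emptyset$ or $J\subseteq K$. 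For $j=1$ this holds automatically and we are in case~\ref{lem:paired:full:i+j=1} (indeed \eqref{eq:paired_i+j=1}). For $j\ge 2$ a block $K$ with $\emptyset\ne K\cap J\ne J$ is uncovered, and such a block exists precisely when $1\le k\le n-1$; ruling this out forces $k\in\{0,n\}$, which is case~\ref{lem:paired:full:k=0}.

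Finally I would verify the converse, that each of the three cases does yield $\vek 1$: case~\ref{lem:paired:full:i+j=1} is \eqref{eq:paired_i+j=1}; in case~\ref{lem:paired:full:k=i=j=1} the legs are $\{I\}$ and $\{J\}$, whose union is all of $\binom V1$; and in case~\ref{lem:paired:full:k=0} the slice $\binom Vk$ consists of the single block $\emptyset$ (if $k=0$) or $V$ (if $k=n$), which is covered by the first, respectively second, leg. I expect the only real subtlety to be the bookkeeping of these block constructions at the numerical boundaries, namely checking that the claimed uncovered blocks genuinely lie in $\binom Vk$ for the stated ranges of $k$ and $n$; the argument is otherwise entirely elementary and avoids Lemma~\ref{lem:p_size}, though one could alternatively run the whole proof through the size equation $\binom{n-i-j}{k-i}+\binom{n-i-j}{k-j}=\binom nk$.
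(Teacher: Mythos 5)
Your proof is correct and complete, but it is organized along a genuinely different case split than the paper's, so a comparison is worthwhile. Both proofs establish the forward direction by exhibiting uncovered blocks, but the paper cases on $k$: after using \eqref{eq:paired_i=j=0} and Lemma~\ref{lem:p_legs}\ref{lem:p_legs:empty} to force $\max(i,j)\geq 1$, $\min(i,j)\leq\min(k,n-k)$ and $\max(i,j)\leq\max(k,n-k)$, it treats $k\in\{0,n\}$, then $k=1$ (with $\max(i,j)\geq 2$ or $\leq 1$, via the containments $\mathcal{P}_{I,J}\subseteq\binom{I^\complement}{1}$ and $\mathcal{P}_{I,J}=\binom{I\cup J}{1}$), then $k\geq 2$ (with $\min(i,j)=0$ or $\geq 1$). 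You instead case on whether both of $I,J$ are non-empty, and the numerical constraints appear as \emph{conclusions} rather than hypotheses: when $I,J\neq\emptyset$, your dual pair of constructions (a block through $x\in I$ and $y\in J$, and a block avoiding both) forces $k\leq 1$ and $k\geq n-1$ simultaneously, collapsing everything to case~\ref{lem:paired:full:k=i=j=1} in one stroke and handling all values of $k$ uniformly; when $I=\emptyset$ and $j\geq 2$, your block with $\emptyset\neq K\cap J\neq J$ forces $k\in\{0,n\}$. A concrete benefit of your organization: in the paper's step for $k\geq 2$ and $j=0$, it is asserted that \emph{any} $k$-subset containing $x\in I$ and $y\in I^\complement$ lies outside $\mathcal{P}_{I,\emptyset}$; this overlooks blocks containing all of $I$, which do belong to the leg $\mathcal{F}_{I,V}$ (e.g.\ $I=\{1,2\}$, $K=\{1,2,3\}$, $k=3$), so that step needs the additional requirement that the block miss a point of $I$ (possible since $i\geq 2$ and $k\leq n-1$). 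Your version of this case separates two points of $J$ instead and never runs into the issue. What the paper's route buys in exchange is economy of means -- it leans on the already-proved Lemma~\ref{lem:p_legs} for the initial constraints -- whereas you rederive what you need from scratch; given how short your argument is, that cost is negligible.
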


\begin{proof}
The direction \enquote{$\Leftarrow$} is readily checked.
For \enquote{$\Rightarrow$}, let $p_{I,J} = \vek{1}$.
By Equation~\eqref{eq:paired_i=j=0}, $\max(i,j) \geq 1$.
From Lemma~\ref{lem:p_legs}\ref{lem:p_legs:empty}, $\min(i,j) \leq \min(k,n-k)$ and $\max(i,j) \leq \max(k,n-k)$.
For $k\in\{0,n\}$, we get $\min(i,j) = 0$.
Together with Equation~\eqref{eq:paired_i=j=0} and~\eqref{eq:paired_i+j=1}, it remains to consider $1 \leq k \leq n-1$ and $i + j \geq 2$.
We distinguish the following cases.
\begin{itemize}
	\item For $k = 1$ and $\max(i,j) \geq 2$, by symmetry, we may assume $i \geq 2$.
	Now $\mathcal{F}_{\!I,J^\complement} = \emptyset$ and hence $\mathcal{P}_{I,J} = \mathcal{F}_{\!J, I^\complement} \subseteq \binom{I^\complement}{1}$, which by $i \geq 2$ is not $\binom{V}{1}$.
	\item For $k = 1$ and $\max(i,j) \leq 1$, we have $i = j = 1$ and one obtains $\mathcal{P}_{I,J} = \binom{I\cup J}{1}$.
	For $n = 2$, this set equals $\binom{V}{1}$.
	For $n \geq 2$, there exists an $x\in V\setminus\{I\cup J\}$, resulting in $\{x\} \notin \mathcal{P}_{I,J}$.
	\item For $k \geq 2$ and $\min(i,j) = 0$, we may assume $j = 0$.
	Then $2 \leq i$ and $i = \max(i,j) \leq \max(k,n-k) \leq n-1$.
	So there exist elements $x\in I$ and $y \in I^\complement$.
	Now any $k$-subset of $V$ containing both $x$ and $y$ (which exists since $k \geq 2$) will not be contained in $\mathcal{P}_{I,\emptyset} = \binom{I}{k} \cup \binom{I^\complement}{k}$ (see~\ref{eq:paired_i=0}).
	\item For $k \geq 2$ and $\min(i,j) \geq 1$, there exist $x\in I$ and $y\in J$, and any $k$-subset of $V$ containing both $x$ and $y$ (which exists by $k \geq 2$) is not contained in $\mathcal{P}_{I,J}$.
\end{itemize}
Hence, we are in one of the cases listed in the statement.
\end{proof}

\begin{lemma}\label{lem:paired_is_basic}
	Let $I,J \subseteq V$ be disjoint, and denote their sizes by $i = \#I$, and $j = \#J$.
	Then the paired function $p_{I,J}$ equals a basic function $f_{I',J'}$ with disjoint $I',J' \subseteq V$ precisely in the following cases:
	\begin{enumerate}[(i)]
		\item\label{lem:paired_is_basic:one_leg_empty} $\max(i,j) > \min(k,n-k)$, where at least one of the legs is empty;
		\item\label{lem:paired_is_basic:k=1} $\min(k,n-k) = i = j = 1$, where $p_{I,J} = f_{\emptyset, I\cup J}$ for $k =1$ and $p_{I,J} = f_{(I\cup J)^\complement, V}$ for $k = n-1$;
		\item\label{lem:paired_is_basic:i+j=1} $i+j = 1$, where $p_{I,J} = \vek{1} = f_{\emptyset,\emptyset^\complement}$.
	\end{enumerate}
\end{lemma}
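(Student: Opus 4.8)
The plan is to prove both implications of the characterization, with the forward direction carrying the real content. For the easy direction ``$\Leftarrow$'', I would verify each listed case directly. In case~\ref{lem:paired_is_basic:one_leg_empty}, the hypothesis $\max(i,j) > \min(k,n-k)$ is, by Lemma~\ref{lem:p_legs}\ref{lem:p_legs:both_empty}, exactly the statement that at least one leg is empty; then $p_{I,J}$ coincides with its surviving leg (Lemma~\ref{lem:p_legs_basic}), which is basic by definition, and if both legs vanish then $p_{I,J}=\vek0$ is basic as well. Case~\ref{lem:paired_is_basic:i+j=1} is Equation~\eqref{eq:paired_i+j=1}, giving $p_{I,J}=\vek1=f_{\emptyset,\emptyset^\complement}$. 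For case~\ref{lem:paired_is_basic:k=1}, the subcase $k=1$ follows from Lemma~\ref{lem:i=j=k}\ref{lem:i=j=k:basic} applied to the two singleton blocks $I$ and $J$ that make up $p_{I,J}$, and the subcase $k=n-1$ follows either by a direct computation of $\mathcal{F}_{(I\cup J)^\complement,V}$ or, more cleanly, by duality.

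That duality is worth recording up front, since it is reused in the hard direction: computing $(f^{(V,k)}_{I,J^\complement})^\perp=f^{(V,n-k)}_{J,I^\complement}$ from Lemma~\ref{lem:basic_elem_mod} shows that dualization merely swaps the two legs, whence $(p^{(V,k)}_{I,J})^\perp=p^{(V,n-k)}_{I,J}$. Since the dual of a basic function is again basic (Lemma~\ref{lem:basic_elem_mod}), $p^{(V,k)}_{I,J}$ is basic if and only if $p^{(V,n-k)}_{I,J}$ is, which lets me transfer the $k=1$ analysis to $k=n-1$.

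For the direction ``$\Rightarrow$'', assume $p_{I,J}$ is basic. If one of the legs is empty, then $\max(i,j)>\min(k,n-k)$ by Lemma~\ref{lem:p_legs}\ref{lem:p_legs:both_empty}, placing us in case~\ref{lem:paired_is_basic:one_leg_empty}, so I may assume both legs are non-empty, i.e.\ $\max(i,j)\le\min(k,n-k)$. The degenerate $i=j=0$ is excluded, because $p_{\emptyset,\emptyset}=2\cdot\vek1$ is not even Boolean (Equation~\eqref{eq:paired_i=j=0}); hence $\#p_{I,J}\ge2$, and Lemma~\ref{lem:basic}\ref{lem:basic:generic} forces the basic representation to be $p_{I,J}=f_{I',J'}$ with $I'=\bigcap\mathcal{P}_{I,J}$ and $J'=\bigcup\mathcal{P}_{I,J}$. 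I would now read these two sets off Lemma~\ref{lem:paired:meet_join} and split into three branches. In the generic branch $\bigcap\mathcal{P}_{I,J}=\emptyset$ and $\bigcup\mathcal{P}_{I,J}=V$, giving $p_{I,J}=f_{\emptyset,V}=\vek1$; feeding this into Lemma~\ref{lem:paired:full} and discarding the options incompatible with ``both legs non-empty and generic'' leaves exactly $i+j=1$, i.e.\ case~\ref{lem:paired_is_basic:i+j=1}. In the branch $i=j=k$, the function $p_{I,J}=\chi_I+\chi_J$ is a sum of two disjoint $k$-sets, so Lemma~\ref{lem:i=j=k}\ref{lem:i=j=k:basic} forces $k=1$ and thus $\min(k,n-k)=i=j=1$, i.e.\ case~\ref{lem:paired_is_basic:k=1}; the branch $i=j=n-k$ with $n\neq2k$ reduces to the previous one by the duality above, yielding $n-k=1$.

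The main obstacle is the bookkeeping in this last direction: one must enumerate the sub-situations of the meet/join Lemma~\ref{lem:paired:meet_join} without gaps and check that the numerical constraints pin down precisely the stated parameter sets across all boundary values, notably $n=2k$ (where the branches $i=j=k$ and $i=j=n-k$ coincide) and small cases such as $n\le3$. The overlaps between the three listed cases -- for instance $i+j=1$ with $k\in\{0,n\}$ satisfies both~\ref{lem:paired_is_basic:one_leg_empty} and~\ref{lem:paired_is_basic:i+j=1} -- are harmless for an ``if and only if'' statement, but I would acknowledge them so that the case split is manifestly exhaustive rather than exclusive.
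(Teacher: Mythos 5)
Your proposal is correct and follows essentially the same route as the paper's own proof: both reduce to the situation where both legs are non-empty (case~\ref{lem:paired_is_basic:one_leg_empty} being exactly the one-empty-leg situation), use Lemma~\ref{lem:basic}\ref{lem:basic:generic} together with Lemma~\ref{lem:paired:meet_join} to force $p_{I,J}=\vek{1}$ in the generic branch and then invoke Lemma~\ref{lem:paired:full} to get $i+j=1$, and settle the branches $i=j=k$ and $i=j=n-k$ via Lemma~\ref{lem:i=j=k}\ref{lem:i=j=k:basic} and dualization. The only differences are organizational: you set up the meet/join framework before splitting into the three branches and spell out the duality $(p^{(V,k)}_{I,J})^\perp=p^{(V,n-k)}_{I,J}$ that the paper relegates to a footnote.
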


\begin{proof}
	The statement is true for $i = j = 0$, where $p_{\emptyset, \emptyset} = 2\cdot \vek{1}$ is not a basic function.
	Case~\ref{lem:paired_is_basic:one_leg_empty} is exactly the situation where one of the legs is empty.
	Hence, we may assume that $p$ is Boolean and that both legs are non-empty, i.\,e. that $\max(i,j) \leq \min(k,n-k)$.

	For \enquote{$\Leftarrow$}, we check the identities in Case~\ref{lem:paired_is_basic:k=1} and~\ref{lem:paired_is_basic:i+j=1}, showing that $p_{I,J}$ is basic.
	For \enquote{$\Rightarrow$}, in the case $i = j = k$, we get $\mathcal{P}_{I,J} = \{I,J\} = f_{\emptyset, I\cup J}$ and, in the case $i = j = n-k$, we get $\mathcal{P}_{I,J} = \{I^\complement,J^\complement\} = f_{(I\cup J)^\complement, V}$.%
	\footnote{
		By Lemma~\ref{lem:i=j=k}\ref{lem:i=j=k:basic} and, if necessary, dualization, these representations of $p_{I,J}$ as a basic function are unique.
	}
	For the remaining cases, let $\mathcal{P}_{I,J} = \mathcal{F}_{\!I',J'}$ with $I' \subseteq J'\subseteq V$ disjoint.
	As both legs are non-empty, we have $\#\mathcal{P}_{I,J} \geq 2$, and Lemma~\ref{lem:basic}\ref{lem:basic:generic} yields $I' = \bigcap \mathcal{P}_{I,J}$ and $J' = \bigcup \mathcal{P}_{I,J} = V$.
	Since neither $i=j=k$ nor $i=j=n-k$, by Lemma~\ref{lem:paired:meet_join}, we obtain that $I' = \emptyset$ and $J' = V$.
	Hence, $p_{I,J} = f_{\emptyset, V} = \vek{1}$, so $p_{I,J}$ falls into one of the cases of Lemma~\ref{lem:paired:full}.
	By our assumptions, only Case~\ref{lem:paired:full:i+j=1} of Lemma~\ref{lem:paired:full} is possible, which is $i+j = 1$.
\end{proof}

\begin{lemma}\label{lem:paired_elem_mod}
	Let $I,J\subseteq V$ be disjoint.
	\begin{enumerate}[(a)]
		\item\label{lem:paired_elem_mod:dual} $(p^{(V,k)}_{I,J})^\perp = p^{(V,n-k)}_{I,J}$.
		\item\label{lem:paired_elem_mod:der} For $k\geq 1$ and $x\in V$,
		\[
		\Der_x(p^{(V,k)}_{I,J})
		= \begin{cases}
			f^{(V\setminus\{x\},k-1)}_{I\setminus\{x\},J^\complement} & \text{if }x\in I\text{,} \\[2mm]
			f^{(V\setminus\{x\},k-1)}_{J\setminus\{x\},I^\complement} & \text{if }x\in J\text{,} \\[2mm]
			p^{(V\setminus\{x\},k-1)}_{I,J} & \text{if }x \notin I \cup J\text{.}
		\end{cases}
		\]
		\item\label{lem:paired_elem_mod:res} For $n-k \geq 1$ and $x\in V$,
		\[
		\Res_x(p^{(V,k)}_{I,J})
		= \begin{cases}
			f^{(V\setminus\{x\},k)}_{J,(I \setminus\{x\})^\complement} & \text{if }x\in I\text{,} \\[2mm]
			f^{(V\setminus\{x\},k)}_{I,(J \setminus\{x\})^\complement} & \text{if }x\in J\text{,} \\[2mm]
			p^{(V\setminus\{x\},k)}_{I,J} & \text{if }x \notin I \cup J\text{.}
		\end{cases}
		\]
	\end{enumerate}
	In~\ref{lem:paired_elem_mod:der} and~\ref{lem:paired_elem_mod:res}, set complements are taken relative to the modified ambient space $V \setminus\{x\}$.
\end{lemma}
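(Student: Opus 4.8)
The plan is to reduce every part to the corresponding statement for basic functions in Lemma~\ref{lem:basic_elem_mod}, exploiting that all three operations $f \mapsto f^\perp$, $\Der_x$, and $\Res_x$ are $\R$-linear. Each of them is defined by precomposition with a set map or by restriction of the domain, hence respects sums; applying any of them to $p_{I,J} = f_{I,J^\complement} + f_{J,I^\complement}$ therefore distributes over the two legs. This is the only structural input beyond Lemma~\ref{lem:basic_elem_mod} and the symmetry $p_{I,J} = p_{J,I}$ of Lemma~\ref{lem:p_symm}.

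For Part~\ref{lem:paired_elem_mod:dual} I would apply Lemma~\ref{lem:basic_elem_mod}(a) to each leg, obtaining $(f^{(V,k)}_{I,J^\complement})^\perp = f^{(V,n-k)}_{J, I^\complement}$ and $(f^{(V,k)}_{J,I^\complement})^\perp = f^{(V,n-k)}_{I, J^\complement}$, where I use the involutive identities $(J^\complement)^\complement = J$ and $(I^\complement)^\complement = I$. The two resulting basic functions are precisely the legs of $p^{(V,n-k)}_{I,J}$, so their sum is the claimed dual. This part needs no case distinction.

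For Parts~\ref{lem:paired_elem_mod:der} and~\ref{lem:paired_elem_mod:res} I would first invoke $p_{I,J} = p_{J,I}$ to treat the branches $x\in I$ and $x\in J$ by a single argument with the roles of $I$ and $J$ interchanged, so that it suffices to handle $x\in I$ and $x\notin I\cup J$. In each case I feed the two legs into Lemma~\ref{lem:basic_elem_mod}(b) resp.\ (c), selecting the applicable branch by locating $x$ relative to the lower and upper set of each leg and using that $I$ and $J$ are disjoint. For instance, for $x\in I$ in Part~\ref{lem:paired_elem_mod:der}, the first leg falls into the \enquote{$x$ in the lower set} branch while the second leg lands in the vanishing branch, yielding $\Der_x(p_{I,J}) = f^{(V\setminus\{x\},k-1)}_{I\setminus\{x\}, J^\complement}$; for $x\notin I\cup J$ both legs land in the third branch and recombine into $p^{(V\setminus\{x\},k-1)}_{I,J}$. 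The residual identities of Part~\ref{lem:paired_elem_mod:res} are entirely analogous, with the vanishing branch switching legs.

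The one genuine subtlety, and the step I would be most careful about, is the bookkeeping of complements: Lemma~\ref{lem:basic_elem_mod}(b),(c) returns basic functions over the reduced ground set $V\setminus\{x\}$, whereas the legs of $p_{I,J}$ carry complements taken in $V$. I would verify the reconciling identities $J^\complement \setminus\{x\} = (V\setminus\{x\})\setminus J$ (valid since $x\notin J$) and $V\setminus I = (V\setminus\{x\})\setminus(I\setminus\{x\})$ (valid since $x\in I$), which convert the $V$-complements into the $(V\setminus\{x\})$-complements that appear in the statement, as flagged by the closing remark of the lemma. No spectral, counting, or design-theoretic input is needed; the entire argument is set-theoretic.
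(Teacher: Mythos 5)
Your proposal is correct and is essentially the paper's own proof: the paper simply says \enquote{Use Lemma~\ref{lem:basic_elem_mod}}, i.e.\ apply the basic-function modification rules to each leg and add, which is exactly what you do, with the complement bookkeeping between $V$ and $V\setminus\{x\}$ carried out explicitly. No gaps; your write-up just makes the linearity and case selection explicit where the paper leaves them implicit.
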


\begin{proof}
	Use Lemma~\ref{lem:basic_elem_mod}.
\end{proof}

\begin{theorem}\label{thm:paired_determines_I_J}
	Let $g$ be a paired function with $\#g \geq 3$ and $g \neq \vek{1}$.
	Then there is a unique unordered pair $\{I,J\}$ of disjoint sets $I,J\subseteq V$ such that $g = p_{I,J}$.
\end{theorem}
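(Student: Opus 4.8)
The plan is to fix a paired representation $g=p_{I,J}$ (which exists since $g$ is paired) and show that the unordered pair $\{I,J\}$ is forced by $g$ alone; by Lemma~\ref{lem:p_symm} only the unordered pair can ever matter. The key preliminary observation is that the hypotheses $\#g\ge 3$ and $g\ne\vek{1}$ make the dichotomy \enquote{some leg is empty} versus \enquote{both legs are non-empty} an \emph{intrinsic} property of $g$: by Lemma~\ref{lem:paired_is_basic}, a paired representation is basic exactly in its cases \ref{lem:paired_is_basic:one_leg_empty}--\ref{lem:paired_is_basic:i+j=1}, and cases \ref{lem:paired_is_basic:k=1} and \ref{lem:paired_is_basic:i+j=1} force $\#g=2$ and $g=\vek{1}$ respectively, both excluded. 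Hence every representation of $g$ has an empty leg if $g$ is basic, and every representation has two non-empty legs otherwise. This splits the proof into two cases that can be treated independently.

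First I would dispose of the basic case. Here $\supp(g)$ is a single non-empty leg, so $g$ is a basic function of size $\#g\ge 3\ge 2$. By Lemma~\ref{lem:basic}\ref{lem:basic:generic} this basic function has a \emph{unique} representation $g=f_{A,B}$ with $A=\bigcap\supp(g)$ and $B=\bigcup\supp(g)$. Tracing back through Definition~\ref{def:paired}, any paired representation $g=p_{I,J}$ with (say) empty second leg satisfies $g=f_{I,J^\complement}$, whence $I=A$ and $J^\complement=B$, i.e.\ $\{I,J\}=\{A,\,B^\complement\}=\{\bigcap\supp(g),(\bigcup\supp(g))^\complement\}$; the symmetric situation of an empty first leg gives the same unordered pair. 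Thus $\{I,J\}$ is uniquely determined.

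The substantial case is when both legs are non-empty in every representation, i.e.\ $\max(i,j)\le\min(k,n-k)$. Since $\#g\ge3$, the two legs $\mathcal F_{I,J^\complement}$ and $\mathcal F_{J,I^\complement}$ cannot both have size $1$, so at least one of them --- say the first --- has size $\ge 2$. If I can show that the unordered partition of $\supp(g)$ into its two legs is determined by $\supp(g)$ alone, then I am done: applying Lemma~\ref{lem:basic}\ref{lem:basic:generic} to the size-$\ge2$ leg $\mathcal F_{I,J^\complement}$ recovers $I=\bigcap\mathcal F_{I,J^\complement}$ and $J=(\bigcup\mathcal F_{I,J^\complement})^\complement$, hence the pair $\{I,J\}$. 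To recover the partition I would work on the ground set $V$: for $x\in I$ the blocks of $\supp(g)$ through $x$ are exactly the first leg, for $x\in J$ exactly the second, while for $x\notin I\cup J$ they are a genuine mix of both legs; correspondingly, two points $x,y\in I\cup J$ lie in a common block of $\supp(g)$ precisely when they lie in the same one of $I,J$. Thus $I\cup J$ can be isolated (e.g.\ as those $x$ for which the star $\{K\in\supp(g):x\in K\}$ is itself basic, or via point-degree counts), and the \enquote{common block} relation then splits $I\cup J$ into exactly the classes $I$ and $J$.

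The main obstacle is precisely this partition-recovery step, and the difficulty is entirely at the boundary: when one leg has size $1$, or when $\min(i,j),\max(i,j)\in\{0,1\}$, blocks from opposite legs can be Johnson-adjacent and a point-star can degenerate, so the clean \enquote{star is basic} and \enquote{no common block} criteria must be checked by hand against Lemma~\ref{lem:paired:meet_join} and the size computations of Lemmas~\ref{lem:paired_size_1} and~\ref{lem:paired_size_2} to confirm that these degenerate configurations are exactly the ones already excluded by $\#g\ge3$ and $g\ne\vek{1}$. An alternative that sidesteps some of this bookkeeping is an induction via Lemma~\ref{lem:paired_elem_mod}: for a free point $x$ the derived and residual functions are again paired with the \emph{same} pair $\{I,J\}$ on $V\setminus\{x\}$, while for $x\in I\cup J$ they become basic, so descending to a smaller ground set until the legs are small enough to compare directly would also force $\{I,J\}$, provided one keeps track of the few base cases where the inductive hypothesis $\#g\ge3$ is lost.
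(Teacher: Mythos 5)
Your treatment of the case where $g$ is basic is correct and agrees with the paper's proof. The gap lies in the main case, and it sits exactly at the step you defer: the criteria you propose for recovering the leg partition fail on instances that the hypotheses do \emph{not} exclude. Take $n=7$, $k=3$, $I=\emptyset$, $J=\{1,2,3\}$, i.e.\ $g=\mathcal{P}^{(\{1,\ldots,7\},3)}_{\emptyset,\{123\}}$ from Example~\ref{ex:small_paired_sets}: here $\#g=5\geq 3$, $g\neq\vek{1}$, $g$ is not basic, and both legs are non-empty --- but the second leg is the single block $\{1,2,3\}$. For any $x\in\{4,5,6,7\}=V\setminus(I\cup J)$, the star of $x$ lies entirely inside the first leg (so it is not a \enquote{genuine mix}), and it equals $\mathcal{F}_{\{x\},\{4,5,6,7\}}$, which \emph{is} basic; hence the \enquote{star is basic} test places every point of $V$ into $I\cup J$. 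Point-degree counts do not rescue this: for $n=2k$ complementation swaps the two legs, so every point lies in exactly $\#g/2$ blocks, and for $i=j+1$ Pascal's identity makes points of $I$ and points outside $I\cup J$ have equal degree (e.g.\ $n=10$, $k=4$, $i=2$, $j=1$, where both degrees are $21$). So the degenerate configurations are not \enquote{exactly the ones already excluded by $\#g\geq 3$ and $g\neq\vek{1}$}; they include perfectly legitimate instances of the theorem, and your argument does not handle them.

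Your one-sentence fallback is essentially the paper's actual proof: there, one shows that $x\in I\cup J$ if and only if \emph{both} $\Der_x(g)$ and $\Res_x(g)$ are basic, and then recovers $\{I,J\}$ by applying Lemma~\ref{lem:basic}\ref{lem:basic:generic} to $\Der_x(g)$ or $\Res_x(g)$ at a point $x\in I\cup J$. But the content you omit --- determining via Lemma~\ref{lem:paired_is_basic} precisely when $\Der_x(g)$ and $\Res_x(g)$ are basic --- is the whole difficulty: in the $n=7$, $k=3$ example above, $\Der_x(g)$ \emph{is} basic for $x\notin I\cup J$ (only $\Res_x(g)$ fails to be), so this case analysis cannot be waved through. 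Moreover, the boundary case $n=4$, $k=2$, $i=j=1$ survives all the stated hypotheses: $g=\{\{1,3\},\{1,4\},\{2,3\},\{2,4\}\}$ has $\#g=4\geq 3$ and $g\neq\vek{1}$, yet $g=p_{\{1\},\{2\}}=p_{\{3\},\{4\}}$, so uniqueness of the unordered pair genuinely fails there. No amount of boundary bookkeeping can close your argument in that case; it must be excluded explicitly (note that the paper's own proof disposes of it via the assertion $\#g=2$, whereas in fact $\#g=4$, so this configuration is a problem for the statement itself, not only for your proposal).
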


\begin{proof}
	Let $I,J \subseteq V$ be disjoint with $g = p_{I,J}$.
	We will show that, up to swapping, the sets $I$ and $J$ are uniquely determined by $g$.
	We may assume that $g$ is Boolean, as otherwise $g = 2\cdot\vek{1}$ where $I = J = \emptyset$ are uniquely determined.

	If $g$ is basic, then $g$ falls into one of the cases of Lemma~\ref{lem:paired_is_basic}.
	The only case compatible with our assumptions is~\ref{lem:paired_is_basic:one_leg_empty}, such that in the representation $g = p_{I,J}$, one of the legs is empty.%
	\footnote{
		In case~\ref{lem:paired_is_basic:k=1}, we would have $\#g = 2$.
		For that reason, the assumption $\#g \geq 3$ in the statement of the lemma cannot be relaxed to $\#g \geq 2$, even in the case that $g$ is basic.
		As a counterexample, consider $V = \{1,\ldots,n\}$ with $n \geq 4$ and $k=1$.
		Then $\mathcal{P}_{\{1\},\{2\}} = \{\{1\},\{2\}\} = \mathcal{P}_{\emptyset,\{3,4,...,n\}}$ does not uniquely determine $\{I,J\}$.
		The first given representation has an empty leg, while the second one has not.
	}
	So, up to swapping $I$ and $J$, we have $g = f_{I,J^\complement}$.
	Since $\#g \geq 3$ (actually, $\#g \geq 2$ would be sufficient in this step), $I = \bigcap \supp(g)$ and $J = \bigcup \supp(g)$ are uniquely determined by Lemma~\ref{lem:basic}\ref{lem:basic:generic}.

	Now we assume that $g$ is not basic.
	Lemma~\ref{lem:paired_is_basic} yields that $\max(i,j) \leq \min(k,n-k)$ and $i + j \geq 2$, and that, in the case $i = j = 1$, we have $\min(k,n-k) \geq 2$.
	Hence, $\min(k,n-k) \geq 2$ and $n \geq 4$.

	For $x\in V\setminus (I \cup J)$, $\Der_x(g) = p_{I,J^\complement}^{(V\setminus\{x\},k-1)}$ and $\Res_x(g) = p_{I,J^\complement}^{(V\setminus\{x\},k)}$ by Lemma~\ref{lem:paired_elem_mod}.
	By Lemma~\ref{lem:paired_is_basic} and our assumptions, $\Der_x(g)$ is not basic except for $i=j=1$ and $k=2$, and $\Res_x(g)$ is not basic except for $i=j=1$ and $n-k=2$.
	These two conditions cannot be simultaneously true, since otherwise $n=4$, $k=2$, and $i=j=1$, resulting in the contradiction $\#g = 2$.
	On the other hand, for $x\in I\cup J$, the sets $\Der_x(g)$ and $\Res_x(g)$ are always basic, implying that $I \cup J$ is uniquely determined by $g$.

	By $i + j \geq 2$, the set $I \cup J$ is not empty.
	Let $x\in I\cup J$.
	Up to swapping $I$ and $J$, we may assume $x\in I$.
	By $\#\Der_x(g) + \#\Res_x(g) = \#g \geq 3$, we have $\#\Der_x(g) \geq 2$ or $\#\Res_x(g) \geq 2$.
	By Lemma~\ref{lem:paired_elem_mod} and Lemma~\ref{lem:basic}\ref{lem:basic:generic}, we get that in the first case, $\Der_x(g) = f_{I\setminus\{x\},J^\complement}^{(V\setminus\{x\},k-1)}$ uniquely determines $I\setminus\{x\}$ (and thus also $I = (I\setminus\{x\}) \cup \{x\}$) and $J$.
	In the second case, $\Res_x(g) = f_{J,(I\setminus \{x\})}$ uniquely determines $I$ and $J$, too.
\end{proof}

\begin{remark}
	We discuss the situation of the cases not covered by Theorem~\ref{thm:paired_determines_I_J}.
	Paired functions $g$ with $\#g = 0$, $\#g = 1$, or $g = \vek{1}$ are discussed in Lemma~\ref{lem:p_legs}\ref{lem:p_legs:empty}, \ref{lem:paired_size_1} and~\ref{lem:paired:full}, respectively.
	They uniquely determine $\{I,J\}$ only in some extremal border cases, like $\#g = 1$, $n=1$ and $k=0$.

	The paired functions with $\#g = 2$ are classified in Lemma~\ref{lem:paired_size_2}.
	One can show that all ambiguous representations are covered in Lemma~\ref{lem:i=j=k}\ref{lem:i=j=k:paired}, together with the dual situation.
\end{remark}

\section{Proof of the main theorem}\label{sec:main_proof}
To simplify the notation, for all non-negative integers $i,j$ with $i + j \leq n$, we define
\[
	t^{(n,k)}_{i,j} = t_{i,j} = \deg p^{(V,k)}_{I,J}\text{,}
\]
where $I,J$ are disjoint subsets of $V$ of size $\#I = i$ and $\#J = j$.
In doing so, we use the fact that, up to isomorphism, the function $p_{I,J}$ only depends on $i = \#I$ and $j = \#J$, but not on the specific choice of the disjoint sets $I,J\subseteq V$.

\begin{lemma}[Symmetry]\label{lem:t_symm}
	For all non-negative integers $i,j$ with $i + j \leq n$,
	\[
		t^{(n,k)}_{i,j} = t^{(n,k)}_{j,i}\text{.}
	\]
\end{lemma}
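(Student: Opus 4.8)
The plan is to prove Lemma~\ref{lem:t_symm} as an immediate consequence of the already-established Symmetry Lemma for paired functions, namely Lemma~\ref{lem:p_symm}, which states that $p_{I,J} = p_{J,I}$ for all disjoint $I,J \subseteq V$. The degree is a well-defined invariant of a function, so equal functions have equal degree; this is the entire content of the argument.

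Concretely, first I would fix non-negative integers $i,j$ with $i+j \le n$ and choose disjoint subsets $I,J \subseteq V$ with $\#I = i$ and $\#J = j$ (such a choice exists precisely because $i+j \le n$). By the definition of $t^{(n,k)}_{i,j}$ in the displayed equation preceding the statement, we have $t^{(n,k)}_{i,j} = \deg p^{(V,k)}_{I,J}$. Swapping the roles of the two sets, the same pair $(J,I)$ witnesses $t^{(n,k)}_{j,i} = \deg p^{(V,k)}_{J,I}$, since $J,I$ are disjoint of sizes $j$ and $i$ respectively. Then I would invoke Lemma~\ref{lem:p_symm} to conclude $p_{I,J} = p_{J,I}$, whence $\deg p_{I,J} = \deg p_{J,I}$, giving the claimed equality.

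The only point requiring a word of care is the well-definedness implicit in the notation $t^{(n,k)}_{i,j}$ itself: the quantity must not depend on the particular choice of disjoint sets of the prescribed sizes. This is exactly the remark made in the paragraph introducing the notation, where the authors note that up to isomorphism $p_{I,J}$ depends only on $i$ and $j$. I would therefore treat this well-definedness as already granted by that surrounding discussion and not reprove it here; it is not really the substance of the lemma.

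I do not expect any genuine obstacle: the result is purely formal, transporting the set-level symmetry $p_{I,J} = p_{J,I}$ through the degree invariant. The proof is a single line, and indeed the authors flag the statement as a trivial consequence by naming it \enquote{Symmetry} in parallel with Lemma~\ref{lem:p_symm}. Accordingly, a one-sentence proof citing Lemma~\ref{lem:p_symm} is entirely adequate.
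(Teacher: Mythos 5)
Your proof is correct and follows exactly the paper's route: the paper's entire proof is \enquote{See Lemma~\ref{lem:p_symm}}, i.e.\ the degree equality is transported from the identity $p_{I,J} = p_{J,I}$, and the well-definedness of $t^{(n,k)}_{i,j}$ is handled (as you note) in the surrounding discussion rather than in the proof itself.
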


\begin{proof}
	See Lemma~\ref{lem:p_symm}.
\end{proof}

For any $I,J \subseteq V$ with $I \cap J = \emptyset$ of size $i = \#I$ and $j = \#J$, we observe that the paired function $p_{I,J} = f_{I,J^\complement} + f_{J,I^\complement}$ is represented by the polynomial
\begin{align}
	\alpha & = \prod_{a\in I} X_a \prod_{b\in J} (1 - X_b) + \prod_{a\in I} (1 - X_a) \prod_{b\in J} X_b \notag \\
	& = ((-1)^j + (-1)^i) \prod_{a\in I \cup J} X_a\; +\; (\text{terms of degree} < i+j) \tag{${\ast}$}\label{eq:basic}
\end{align}
This expression yields two improvements of the elementary bound in Lemma~\ref{lem:elem_bound}.

\begin{lemma}\label{lem:subdiag_odd_upper_bound}
	Let $i,j$ be non-negative integers.
	For $i + j \leq \min(k,n-k)$ and $i + j$ odd, $t^{(n,k)}_{i,j} \leq i+j-1$.
\end{lemma}

\begin{proof}
	For $i + j$ odd, the factor $(-1)^j + (-1)^i$ in Equation~\eqref{eq:basic} vanishes.
\end{proof}

\begin{lemma}\label{lem:subdiag_even}
	Let $i,j$ be non-negative integers.
	For $i + j \leq \min(k,n-k)$ and $i + j$ even, $t^{(n,k)}_{i,j} = i+j$.
\end{lemma}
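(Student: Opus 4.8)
The upper bound $t^{(n,k)}_{i,j}\le i+j$ is immediate: since $i+j\le\min(k,n-k)$, Lemma~\ref{lem:elem_bound} (equivalently, the polynomial~\eqref{eq:basic} of degree $i+j$) gives $\deg p_{I,J}\le\min(i+j,k,n-k)=i+j$. All the work is the matching lower bound $t^{(n,k)}_{i,j}\ge i+j$, and this is exactly where the parity enters: the leading coefficient $(-1)^j+(-1)^i$ in~\eqref{eq:basic} is nonzero precisely when $i+j$ is even, so morally the top-degree part of $p_{I,J}$ survives. My plan is to turn this \enquote{morally} into a clean, spectral-free argument by detecting the degree-$(i+j)$ part through a parity-sensitive signed evaluation, i.e.\ a discrete-derivative test. (The border case $i+j=0$, where $p_{\emptyset,\emptyset}=2\cdot\vek 1$ has degree $0=i+j$, is handled separately, so assume $i+j\ge 1$.)

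The tool I would first isolate is the following elementary necessary condition for low degree. Fix a base set $Z\subseteq V$, fix $d+1$ pairwise disjoint two-element sets $\{a_\ell,b_\ell\}$ ($\ell=1,\dots,d+1$), all disjoint from $Z$, with $\#Z=k-(d+1)$, and for $\varepsilon\in\{0,1\}^{d+1}$ let $K_\varepsilon\in\binom{V}{k}$ be the block obtained by adjoining to $Z$ the element $a_\ell$ if $\varepsilon_\ell=0$ and $b_\ell$ if $\varepsilon_\ell=1$. I claim that every $f$ with $\deg f\le d$ satisfies $\sum_{\varepsilon}(-1)^{\lvert\varepsilon\rvert}f(K_\varepsilon)=0$. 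By linearity it suffices to check this on the $d$-pencils $f_{T,V}$ with $\#T\le d$, where $f_{T,V}(K_\varepsilon)=[\,T\subseteq K_\varepsilon\,]$; here $\#T\le d<d+1$ forces some pair $\{a_\ell,b_\ell\}$ to be disjoint from $T$, and summing over $\varepsilon_\ell\in\{0,1\}$ with that index free makes the signed sum telescope to $0$. This is the only ingredient I would need to add, and it uses merely the definition of degree as a combination of pencils.

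Now I would apply this with $d=i+j-1$, so that $d+1=i+j$ pairs are used. The configuration is chosen to align with the legs: enumerate $I=\{a_1,\dots,a_i\}$ and $J=\{a_{i+1},\dots,a_{i+j}\}$, let each $b_\ell$ be a fresh element of $V\setminus(I\cup J)$, and let $Z$ be a base of size $k-(i+j)$ disjoint from everything. Feasibility follows from the hypotheses: the pairs together with $Z$ use $k+(i+j)\le k+(n-k)=n$ elements, and the $i+j$ fresh $b_\ell$ are available because $n-k\ge i+j$; moreover $2(i+j)\le k+(n-k)=n$. Evaluating $p_{I,J}(K_\varepsilon)=[\,I\subseteq K_\varepsilon,\ K_\varepsilon\cap J=\emptyset\,]+[\,J\subseteq K_\varepsilon,\ K_\varepsilon\cap I=\emptyset\,]$ on this configuration, the first (leg $f_{I,J^\complement}$) term is nonzero only for the single selection that keeps all of $I$ and discards all of $J$, contributing sign $(-1)^{j}$; the second term is nonzero only for the opposite selection, contributing $(-1)^{i}$; every other selection gives $0$. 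Hence the signed sum equals $(-1)^{i}+(-1)^{j}=2(-1)^{i}\neq 0$ when $i+j$ is even, so $p_{I,J}$ cannot have degree $\le i+j-1$, giving $t^{(n,k)}_{i,j}\ge i+j$ and, with the upper bound, equality.

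The genuinely delicate part is not the inequality but the bookkeeping that makes the signed sum collapse to exactly two surviving terms: one must verify that no fresh element or base element of $Z$ can accidentally place a $J$-element into the first leg's block (or an $I$-element into the second), so that the constraints $I\subseteq K_\varepsilon$ and $K_\varepsilon\cap J=\emptyset$ pin $\varepsilon$ down uniquely. This is precisely the combinatorial shadow of the leading-coefficient computation $(-1)^j+(-1)^i$ in~\eqref{eq:basic}, and it is what transmutes the parity hypothesis into non-cancellation; the odd case, treated in Lemma~\ref{lem:subdiag_odd_upper_bound}, is exactly the complementary situation where these two surviving terms cancel.
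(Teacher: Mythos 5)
Your proof is correct, but the lower bound is obtained by a genuinely different mechanism than in the paper. The paper's proof peels off the leading term of the representing polynomial~\eqref{eq:basic}: for $i+j$ even this term is $2\prod_{a\in I\cup J}X_a$, i.e.\ twice the pencil $f_{I\cup J,V}$, so $p_{I,J} - 2f_{I\cup J,V}$ has degree $<i+j$; since Fact~\ref{fct:basic_deg} gives $\deg f_{I\cup J,V}=i+j$ exactly, Fact~\ref{fct:deg_linear_combination} forces $\deg p_{I,J}=i+j$. You instead prove a self-contained finite-difference criterion -- the signed sum of $f$ over the $2^{d+1}$ blocks $K_\varepsilon$ of a \enquote{combinatorial cube} (base $Z$ plus $d+1$ disjoint pairs $\{a_\ell,b_\ell\}$) vanishes whenever $\deg f\le d$ -- and then exhibit a cube aligned with the legs on which the signed sum of $p_{I,J}$ equals $(-1)^i+(-1)^j=\pm 2\neq 0$; your feasibility and bookkeeping checks (disjointness, $k+(i+j)\le n$, uniqueness of the two surviving $\varepsilon$) are all sound, and the border case $i+j=0$ is correctly set aside. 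What each approach buys: the paper's argument is two lines but leans on the imported Theorem behind Fact~\ref{fct:basic_deg}, whose lower-bound content is exactly what is at stake; your differencing certificate needs nothing beyond the definition of degree as a span of pencils, so it effectively inlines an elementary proof of that missing half (applied directly to $p_{I,J}$ rather than to the pencil), at the cost of length. It also isolates the parity phenomenon cleanly: the same two surviving terms cancel when $i+j$ is odd, which is precisely the upper-bound statement of Lemma~\ref{lem:subdiag_odd_upper_bound}.
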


\begin{proof}
	For $i + j$ even, the term $((-1)^j + (-1)^i) \prod_{a\in I \cup J} X_a$ in Equation~\eqref{eq:basic} represents the function $2f_{I\cup J,\emptyset}$.
	Hence, by Equation~\eqref{eq:basic},
	\[
		\deg(p_{I,J} - 2f_{I\cup J,V}) < i+j\text{.}
	\]
	By Fact~\ref{fct:basic_deg} and $i + j \leq \min(k, n-k)$, we have $\deg f_{I\cup J,V} = i + j$.
	Now Fact~\ref{fct:deg_linear_combination} yields the claim.
\end{proof}

\begin{lemma}\label{lem:t_triad}
	Let $i,j$ be non-negative integers with $i + j < n$.
	Then, among the three numbers $t^{(n,k)}_{i,j}$, $t^{(n,k)}_{i+1,j}$ and $t^{(n,k)}_{i,j+1}$, two are equal, and the third one is either equal or smaller.
\end{lemma}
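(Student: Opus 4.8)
The plan is to read the desired \enquote{triad} relation directly off the Pascal decomposition of paired functions, combined with the degree-of-a-sum rule. First I would fix disjoint sets $I,J\subseteq V$ with $\#I = i$ and $\#J = j$; since the hypothesis $i+j<n$ gives $\#(I\cup J)<n$, there exists an element $x\in V\setminus(I\cup J)$. Lemma~\ref{lem:paired_pascal_split} then yields
\[
	p_{I,J} = p_{I\cup\{x\},J} + p_{I,J\cup\{x\}}\text{,}
\]
whose three terms have degrees $t^{(n,k)}_{i,j}$, $t^{(n,k)}_{i+1,j}$, and $t^{(n,k)}_{i,j+1}$, respectively; note that $i+j+1\le n$, so all three symbols are indeed defined. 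Abbreviating $c = t^{(n,k)}_{i,j}$, $a = t^{(n,k)}_{i+1,j}$, and $b = t^{(n,k)}_{i,j+1}$, the whole claim reduces to the following assertion about $a,b,c$: two of them coincide, and the third does not exceed those two.

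The engine of the argument is Fact~\ref{fct:deg_linear_combination}\ref{fct:deg_linear_combination:sum}, applied to the displayed sum $p_{I,J} = p_{I\cup\{x\},J} + p_{I,J\cup\{x\}}$: it gives $c\le\max(a,b)$, with equality whenever $a\ne b$. I would then distinguish two cases. If $a\ne b$, equality forces $c=\max(a,b)$, so $c$ agrees with the larger of $a$ and $b$, while the smaller one lies strictly below both — two values equal, the third smaller. If instead $a=b$, then $a$ and $b$ already coincide and $c\le\max(a,b)=a=b$, so the remaining value $c$ is equal or smaller. In either case the statement holds exactly as phrased.

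I do not expect a genuine obstacle here: the entire content is carried by the Pascal decomposition of Lemma~\ref{lem:paired_pascal_split} and by the sum rule of Fact~\ref{fct:deg_linear_combination}\ref{fct:deg_linear_combination:sum}. The only point requiring a little care is matching the case split to the precise wording \enquote{two are equal, and the third one is either equal or smaller}. It is precisely the \enquote{equality whenever the degrees differ} clause of the sum rule that forces the two larger degrees to agree in the generic case $a\ne b$; this is the step that might superficially appear to need an independent argument but in fact follows immediately.
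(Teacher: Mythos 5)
Your proof is correct and is exactly the paper's argument: the paper proves this lemma by citing Lemma~\ref{lem:paired_pascal_split} together with Fact~\ref{fct:deg_linear_combination}, with a footnote noting that $i+j<n$ guarantees the existence of $x\notin I\cup J$. Your write-up merely makes explicit the case distinction ($a\neq b$ versus $a=b$) that the paper leaves to the reader.
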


\begin{proof}
	This follows from Lemma~\ref{lem:paired_pascal_split} and Fact~\ref{fct:deg_linear_combination}.%
	\footnote{
		In the application of Lemma~\ref{lem:paired_pascal_split}, the assumption $i + j < n$ ensures the existence of an $x\notin I \cup J$.
	}
\end{proof}

\subsection{The case $n = 2k$}

We will first resolve the middle layer case $n = 2k$.
Here, we have $k = n-k$.

\begin{lemma}[Monotonicity]\label{lem:monotonicity}
	Let $i,j\in\{0,\ldots,k\}$.
	\begin{enumerate}[(a)]
		\item\label{lem:monotonicity:i} If $i < k$, then $t^{(2k,k)}_{i,j} \leq t^{(2k,k)}_{i+1,j}$.
		\item\label{lem:monotonicity:j} If $j < k$, then $t^{(2k,k)}_{i,j} \leq t^{(2k,k)}_{i,j+1}$.
	\end{enumerate}
\end{lemma}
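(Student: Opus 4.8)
The claim is that in the middle layer $n=2k$, the degree $t^{(2k,k)}_{i,j}$ is weakly monotone increasing in each of the two coordinates $i,j$ (as long as we stay within the valid range $0 \le i,j \le k$). By Lemma~\ref{lem:t_symm} the two statements~\ref{lem:monotonicity:i} and~\ref{lem:monotonicity:j} are symmetric to each other, so the plan is to prove~\ref{lem:monotonicity:i} and obtain~\ref{lem:monotonicity:j} by swapping $i$ and $j$. The natural tool is Lemma~\ref{lem:t_triad}, which relates the three neighbouring quantities $t_{i,j}$, $t_{i+1,j}$ and $t_{i,j+1}$.

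**The main idea.**
The triad lemma says that, among $t_{i,j}$, $t_{i+1,j}$, $t_{i,j+1}$, two are equal and the third is less than or equal to those two. This alone is \emph{not} enough: it tells us that $t_{i+1,j}$ is the maximum of the triple only in some of the cases, but a priori $t_{i,j}$ could be the strictly largest of the three, which would contradict monotonicity. So the first thing I would do is inspect the trichotomy more carefully. The decomposition $p_{I,J} = p_{I\cup\{x\},J} + p_{I,J\cup\{x\}}$ (Lemma~\ref{lem:paired_pascal_split}) together with Fact~\ref{fct:deg_linear_combination}\ref{fct:deg_linear_combination:sum} gives $t_{i,j} \le \max(t_{i+1,j}, t_{i,j+1})$, with equality whenever $t_{i+1,j} \ne t_{i,j+1}$. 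The hard part is therefore to rule out the scenario in which $t_{i,j}$ is the strict maximum, i.e. where $t_{i+1,j} = t_{i,j+1} < t_{i,j}$.

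**Closing the gap.**
To exclude that bad scenario I would exploit the symmetry of the middle layer together with the explicit upper and lower bounds already established. Concretely, the relation $t_{i,j} \le \max(t_{i+1,j}, t_{i,j+1})$ shows $t_{i,j}$ can never strictly exceed \emph{both} of its successors. Combined with the coordinate symmetry $t_{i,j}=t_{j,i}$, the only way monotonicity in $i$ could fail is if $t_{i,j} > t_{i+1,j}$ while $t_{i,j} \le t_{i,j+1}$. I would then feed in the sharp values from Lemmas~\ref{lem:subdiag_odd_upper_bound} and~\ref{lem:subdiag_even}: on the subdiagonal region $i+j\le k$, the degree is exactly $i+j$ when $i+j$ is even and at most $i+j-1$ when $i+j$ is odd, and these parity-driven values are manifestly non-decreasing along either coordinate direction. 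For the remaining region $i+j > k$ (still with $i,j \le k = n-k$), I expect to run an induction downward in $i+j$, using the triad relation to propagate monotonicity from the already-known boundary, where the elementary bound $t_{i,j}\le\min(i+j,k,n-k)=k$ is attained.

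**Anticipated obstacle.**
The genuinely delicate step is controlling the region $i+j \ge k$ where the elementary bound saturates at $k$ and the exceptional value $k-1$ (for odd $k$) from Theorem~\ref{thm:paired_deg} appears. Here the triad lemma's ambiguity — which two of the three are equal — must be resolved using the parity phenomenon from Equation~\eqref{eq:basic}, since the vanishing of the leading coefficient $(-1)^i+(-1)^j$ is exactly what drops the degree by one in the odd case. I would therefore expect the crux to be a careful case analysis on the parities of $i$, $j$, and $k$, checking that each single-step increment $i \mapsto i+1$ either preserves the degree or raises it, never lowering it. The symmetry $t_{i,j}=t_{j,i}$ should substantially cut down the number of cases, and I anticipate the whole argument can be organized as a short induction anchored at the explicitly computed subdiagonal values.
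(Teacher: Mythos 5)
There is a genuine gap: your plan is circular and your main tool is too weak. The exact subdiagonal values you want to anchor the argument on are not available at this point. For odd $i+j \leq k$, only the \emph{upper} bound $t^{(2k,k)}_{i,j} \leq i+j-1$ (Lemma~\ref{lem:subdiag_odd_upper_bound}) is known; the matching lower bound is Lemma~\ref{n_middle_subdiag_odd}, which the paper proves \emph{from} monotonicity. So the values are not ``manifestly non-decreasing'': with only the upper bound in hand, nothing yet prevents, say, $t^{(2k,k)}_{3,0} < t^{(2k,k)}_{2,0} = 2$. The triad lemma cannot repair this, because it only asserts that the maximum of each triple is attained twice, and that constraint is already satisfied without forcing a lower bound on the third entry: by symmetry and the triad at $(1,1)$ one gets $t^{(2k,k)}_{2,1} = t^{(2k,k)}_{1,2} = 2$, but then the triad at $(2,0)$ is fulfilled by the pair $t^{(2k,k)}_{2,0} = t^{(2k,k)}_{2,1} = 2$ and places no restriction whatsoever on $t^{(2k,k)}_{3,0}$. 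The same circularity afflicts your proposed induction anchor on the region $i+j \geq k$: the boundary value there is \emph{not} known to be $k$ (for odd $k$ it is in fact $k-1$), and that, too, is only established after monotonicity via Lemmas~\ref{lem:n_middle_diag_const} and~\ref{lem:n_middle_superdiag}. Invoking Theorem~\ref{thm:paired_deg} at this stage is not permitted, since the lemma is a step in its proof.

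The missing idea is that the paper does not prove monotonicity by bookkeeping on the table of degrees at all, but by an averaging identity. One checks by counting that
\[
	(k-i)\, f_{I,J^\complement} \;=\; \sum_{\substack{\bar{I}\in \binom{V}{i+1} \\ I \subseteq \bar{I} \subseteq J^\complement}} f_{\bar{I},J^\complement}\text{,}
\]
and -- this is precisely where $n=2k$ enters -- dualization gives the analogous identity for the second leg with the \emph{same} coefficient $k-i$ (for $n \neq 2k$ it would be $n-k-i$, and the two identities could not be combined). Adding and dividing by $k-i$ yields
\[
	p_{I,J} \;=\; \frac{1}{k-i} \sum_{\substack{\bar{I}\in \binom{V}{i+1} \\ I \subseteq \bar{I} \subseteq J^\complement}} p_{\bar{I},J}\text{,}
\]
so $p_{I,J}$ is a linear combination of paired functions with parameters $(i+1,j)$, and Fact~\ref{fct:deg_linear_combination} immediately gives $t^{(2k,k)}_{i,j} \leq t^{(2k,k)}_{i+1,j}$. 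Without an identity of this kind, purely order-theoretic manipulation of the triad relation, symmetry, and the partial boundary data cannot exclude your ``bad scenario.''
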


\begin{proof}
	By the symmetry property in Lemma~\ref{lem:t_symm}, it is enough to show Part~\ref{lem:monotonicity:i}.
	Let $i < k$, and let $I, J\subseteq V$ be disjoint sets of size $\#I = i$ and $\#J = j$.

	First, we show that
	\begin{equation}\label{eq:lem:monotonicity:1}
		(k-i) f_{I,J^\complement} = \sum_{\substack{\bar{I}\in \binom{V}{i+1} \\[0.5mm] I \subseteq \bar{I} \subseteq J^\complement}} f_{\bar{I},J^\complement}\text{.}
	\end{equation}
	Let $K\in\binom{V}{k}$.
	For $K \notin \mathcal{F}_{\!I,J^\complement}$, both sides evaluate to $0$.
	For $K \in \mathcal{F}_{\!I,J^\complement}$, the left hand side evaluates to $k-i$.
	We show that this is also true for the right hand side.
	The sets $\bar{I}$ in the summation on the right hand side are exactly those of the form $\bar{I} = I \cup \{x\}$ with $x\in V\setminus (I \cup J)$.
	For such a set $\bar{I}$, we have $K \in \mathcal{F}_{\!\bar{I},J^\complement} \iff \bar{I} \subseteq K \iff x \in K\setminus I$.
	This condition is true for precisely the $k-i$ elements $x\in K\setminus I$.

	Using $2k = n$, dualization (or a similar counting argument) yields
	\begin{equation}\label{eq:lem:monotonicity:2}
		(k-i) f_{J,I^\complement}
		= \sum_{\substack{\bar{I}\in \binom{V}{i+1} \\[0.5mm] I \subseteq \bar{I} \subseteq J^\complement}} f_{J,\bar{I}^\complement}\text{.}
	\end{equation}
	Addition of Equations~\eqref{eq:lem:monotonicity:1} and~\eqref{eq:lem:monotonicity:2}, followed by a division by $k-i \neq 0$, leads to
	\[
		p_{I,J}
		=
		\frac{1}{k-i}\sum_{\substack{\bar{I}\in \binom{V}{i+1} \\[0.5mm] I \subseteq \bar{I} \subseteq J^\complement}} p_{\bar{I},J}\text{.}
	\]
	After applying the degree, Fact~\ref{fct:deg_linear_combination} yields $t^{(2k,k)}_{i,j} \leq t^{(2k,k)}_{i+1,j}$.
\end{proof}

\begin{remark}
	By our main result, the monotonicity property in Lemma~\ref{lem:monotonicity} is also true without the assumption $n = 2k$, in the range where at most one leg is empty, i.\,e.\ for $\min(i,j) \leq \min(k,n-k)$ and $\max(i,j) \leq \max(k,n-k)$.

	Unfortunately, the above proof does not carry over to the extended situation.
	While the leading factor in Equation~\eqref{eq:lem:monotonicity:1} is still $k-i$, the leading factor in Equation~\eqref{eq:lem:monotonicity:2} would be $n-k-i$, such that the subsequent addition of Equations~\eqref{eq:lem:monotonicity:1} and~\eqref{eq:lem:monotonicity:2} no longer yields the desired result.
	This is the reason why we first treat the case $n=2k$ separately.
\end{remark}

\begin{lemma}\label{n_middle_subdiag_odd}
	Let $i,j$ be non-negative integers.
	For $i + j \leq k$ and $i + j$ odd, $t^{(2k,k)}_{i,j} = i+j-1$.
\end{lemma}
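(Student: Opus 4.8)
The upper bound $t^{(2k,k)}_{i,j} \le i+j-1$ is already furnished by Lemma~\ref{lem:subdiag_odd_upper_bound}, since $\min(k,n-k)=k$ when $n=2k$. Hence the entire content of the lemma lies in establishing the matching lower bound $t^{(2k,k)}_{i,j} \ge i+j-1$; once this is in hand, the two inequalities combine to give the claimed equality.

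For the lower bound, the plan is to reduce to the even case, whose degree is already known exactly. Since $i+j$ is odd, it is in particular positive, so at least one of $i,j$ is nonzero; invoking the symmetry property (Lemma~\ref{lem:t_symm}), I may assume without loss of generality that $i\ge 1$. Then the pair $(i-1,j)$ satisfies $(i-1)+j = i+j-1$, which is even and bounded by $k-1 \le \min(k,n-k)$. Lemma~\ref{lem:subdiag_even} therefore applies and yields $t^{(2k,k)}_{i-1,j} = i+j-1$.

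It remains to transport this value upward along the first coordinate. Because $i \le i+j \le k$, we have $i-1 < k$, so the monotonicity statement of Lemma~\ref{lem:monotonicity}\ref{lem:monotonicity:i}, applied at the index $i-1$, gives $t^{(2k,k)}_{i-1,j} \le t^{(2k,k)}_{i,j}$. Combining the last two relations produces $i+j-1 \le t^{(2k,k)}_{i,j}$, which is exactly the lower bound sought, and the proof closes.

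I do not expect a genuine obstacle here: the two ingredients---the exact even-sum formula of Lemma~\ref{lem:subdiag_even} and the monotonicity of $t^{(2k,k)}_{i,j}$ from Lemma~\ref{lem:monotonicity}---have already been secured, and the present argument merely splices them together across the parity gap. The only points demanding a little care are the boundary case $i+j=1$ (where the lower bound reads $t^{(2k,k)}_{i,j}\ge 0$, automatically consistent with $p_{I,J}=\vek{1}$) and the verification that the index conditions in Lemmas~\ref{lem:subdiag_even} and~\ref{lem:monotonicity} are met, both of which follow immediately from $i+j\le k$.
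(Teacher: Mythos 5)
Your proof is correct and follows exactly the paper's own argument: the upper bound from Lemma~\ref{lem:subdiag_odd_upper_bound}, the identity $t^{(2k,k)}_{i-1,j}=i+j-1$ from Lemma~\ref{lem:subdiag_even}, and the monotonicity of Lemma~\ref{lem:monotonicity} are spliced into the same chain of inequalities (the paper writes it as a single displayed chain after assuming $i\geq 1$ by symmetry). Your additional verification of the index conditions and the boundary case $i+j=1$ is sound but adds nothing beyond what the paper leaves implicit.
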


\begin{proof}
	By symmetry, we may assume $i \geq 1$.
	Then
	\[
		t^{(2k,k)}_{i,j}
		\overset{\text{Lem.~\ref{lem:subdiag_odd_upper_bound}}}{\leq} i+j-1
		\overset{\text{Lem.~\ref{lem:subdiag_even}}}{=} t^{(2k,k)}_{i-1,j}
		\overset{\text{Lem.~\ref{lem:monotonicity}}}{\leq} t^{(2k,k)}_{i,j}\text{.}
	\]
\end{proof}

\begin{lemma}\label{lem:n_middle_diag_const}
	Let $s\in\{0,\ldots,2k\}$.
	Then the numbers $t^{(2k,k)}_{i,j}$ are constant over all $i,j\in\{0,\ldots,k\}$ with $i+j = s$.
\end{lemma}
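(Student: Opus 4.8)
The plan is to show that any two adjacent entries on a fixed antidiagonal $i+j=s$ coincide; this immediately forces the whole antidiagonal to be constant. The tools I would combine are Lemma~\ref{lem:monotonicity} (monotonicity) and Lemma~\ref{lem:t_triad} (the triad relation), and these are exactly what is needed. Fix $s\in\{0,\ldots,2k\}$. The degenerate cases $s\in\{0,2k\}$ consist of the single entry $t^{(2k,k)}_{0,0}$ resp.\ $t^{(2k,k)}_{k,k}$ and are trivially constant, so I may assume there is more than one valid entry.

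Consider two consecutive valid entries $t^{(2k,k)}_{a,s-a}$ and $t^{(2k,k)}_{a+1,s-a-1}$ on this antidiagonal. Setting $i=a$ and $j=s-a-1$, the validity of both entries (each index lying in $\{0,\ldots,k\}$) forces $i\in\{0,\ldots,k-1\}$ and $j\in\{0,\ldots,k-1\}$, while $i+j=s-1<2k$. In particular $i<k$ and $j<k$, so both inequalities of Lemma~\ref{lem:monotonicity} apply and give $t^{(2k,k)}_{i,j}\leq t^{(2k,k)}_{i+1,j}$ and $t^{(2k,k)}_{i,j}\leq t^{(2k,k)}_{i,j+1}$. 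Thus $t^{(2k,k)}_{i,j}$ is the minimum of the triad $\{t^{(2k,k)}_{i,j},\,t^{(2k,k)}_{i+1,j},\,t^{(2k,k)}_{i,j+1}\}$.

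Now I would invoke Lemma~\ref{lem:t_triad}, which is available since $i+j=s-1<n=2k$: among these three numbers two are equal and the third is equal or smaller. Since $t^{(2k,k)}_{i,j}$ is already known to be $\leq$ both of the others, either it occupies the smaller slot, in which case the two equal values are exactly $t^{(2k,k)}_{i+1,j}$ and $t^{(2k,k)}_{i,j+1}$, or it is one of the two equal values, in which case the monotonicity inequalities squeeze the third one up to the common value and collapse all three. In every case $t^{(2k,k)}_{i+1,j}=t^{(2k,k)}_{i,j+1}$, that is, $t^{(2k,k)}_{a+1,s-a-1}=t^{(2k,k)}_{a,s-a}$.

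Running this over all consecutive pairs along the antidiagonal yields the claimed constancy. The only real subtlety is the elementary case distinction in the previous paragraph, namely that knowing $t^{(2k,k)}_{i,j}$ to be the minimum of the triad forces Lemma~\ref{lem:t_triad} to equate the two larger entries. No further machinery is required; in particular, no spectral or design-theoretic input enters, and the argument stays entirely within the combinatorics of the degree recursions already established.
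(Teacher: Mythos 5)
Your proof is correct and follows essentially the same route as the paper: reduce to adjacent entries $(i+1,j)$ and $(i,j+1)$ on the antidiagonal, use Lemma~\ref{lem:monotonicity} to see that $t^{(2k,k)}_{i,j}$ is the minimum of the triad, and then conclude $t^{(2k,k)}_{i+1,j}=t^{(2k,k)}_{i,j+1}$ from Lemma~\ref{lem:t_triad}. Your explicit case analysis in the final step merely spells out what the paper's proof leaves implicit.
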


\begin{proof}
	There is nothing to do for $s=0$, so let $s \geq 1$.
	We show that, for any two adjacent positions $(i,j)$ on the diagonal $i + j = s$, the degree is the same.
	Two adjacent positions can be written as $(i'+1,j')$ and $(i', j' + 1)$, where $i'$ and $j'$ are non-negative integers with $i' + j' = s-1$.
	By Lemma~\ref{lem:monotonicity}, $t^{(2k,k)}_{i', j'} \leq \min(t^{(2k,k)}_{i' + 1, j'}, t^{(2k,k)}_{i', j' + 1})$.
	Now by Lemma~\ref{lem:t_triad}, $t^{(2k,k)}_{i' + 1, j'} = t^{(2k,k)}_{i', j' + 1}$.
\end{proof}

\begin{lemma}\label{lem:n_middle_superdiag}
	Let $n = 2k$ and $i,j\in\{0,\ldots,k\}$ with $i + j \geq k$.
	Then
	\[
		t^{(2k,k)}_{i,j} = \begin{cases}
			k & \text{if }k \text{ even,} \\
			k-1 & \text{if }k \text{ odd.}
		\end{cases}
	\]
\end{lemma}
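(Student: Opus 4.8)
The plan is to reduce the claim to two one-sided estimates after recording that, by Lemma~\ref{lem:n_middle_diag_const}, the value $t_{i,j}^{(2k,k)}$ depends only on $s = i+j$; write $\tau_s$ for this common value on the diagonal $i+j=s$. The base value $\tau_k$ is already known: by Lemma~\ref{lem:subdiag_even} it equals $k$ when $k$ is even, and by Lemma~\ref{n_middle_subdiag_odd} it equals $k-1$ when $k$ is odd. Monotonicity (Lemma~\ref{lem:monotonicity}) propagates upward along the diagonals: from any $(i,j)$ with $i+j=s<2k$ and $i,j\le k$, at least one of $i,j$ is still below $k$, and increasing it reaches diagonal $s+1$ while staying in range, so $\tau_k\le\tau_{k+1}\le\cdots\le\tau_{2k}$. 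Hence $\tau_s\ge\tau_k$ for all $s\ge k$, which already supplies the required lower bound $\tau_s\ge k$ (resp. $\ge k-1$) in the even (resp. odd) case.

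For even $k$ the matching upper bound is immediate: the elementary bound Lemma~\ref{lem:elem_bound} gives $\tau_s\le\min(i+j,k,n-k)=k$, so $\tau_s=k$. All the difficulty is therefore concentrated in the upper bound for odd $k$, where I must rule out $\tau_s=k$ and show $\tau_s\le k-1$. This is the main obstacle: monotonicity and the triad Lemma~\ref{lem:t_triad} only ever bound a diagonal from below by lower ones, so within the middle layer they cannot by themselves force a degree drop, and the cancellation of the leading monomial used on the subdiagonal (Lemma~\ref{lem:subdiag_odd_upper_bound}) is no longer available once $i+j>k$, because there the polynomial in~\eqref{eq:basic} is not of minimal degree.

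The key new idea is to exploit self-duality at the level of representing polynomials. Writing $h=f_{I,J^\complement}$, Lemma~\ref{lem:basic_elem_mod} gives $f_{J,I^\complement}=h^\perp$ (using $n-k=k$), so $p_{I,J}=h+h^\perp$. Complementation acts on polynomial representatives by the substitution $X_a\mapsto 1-X_a$: if $\gamma$ represents $h$, then $\gamma(1-X)$ represents $h^\perp$, since evaluating $\gamma$ at the indicator vector of $K^\complement$ coincides with substituting $X_a\mapsto 1-X_a$ and evaluating at $K$. Now choose $\gamma$ to be a representative of minimal degree; since $\deg(h)\le\min(k,n-k)=k$, this $\gamma$ has $\deg\gamma\le k$. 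Then $p_{I,J}$ is represented by $\gamma(X)+\gamma(1-X)$, and the substitution $X_a\mapsto 1-X_a$ multiplies the degree-$k$ homogeneous part of $\gamma$ by $(-1)^k$. For $k$ odd the two top parts cancel because $1+(-1)^k=0$, so $\gamma(X)+\gamma(1-X)$ has polynomial degree at most $k-1$; by the characterization of the degree as the minimal degree of a representing polynomial, $\deg(p_{I,J})\le k-1$.

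Combining the two sides finishes the proof: for $s\ge k$ one gets $\tau_s=k$ when $k$ is even and $\tau_s=k-1$ when $k$ is odd. I expect the only subtle points to be bookkeeping — verifying that a minimal representative indeed has degree at most $k$, and that the substitution affects only the top homogeneous component as claimed — while the genuinely decisive step is the parity cancellation $1+(-1)^k=0$, which is the elementary counterpart of complementation acting by the sign $(-1)^k$ on the top layer of $J(2k,k)$, now obtained without any spectral machinery.
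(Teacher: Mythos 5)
Your proof is correct, but it takes a genuinely different route from the paper's. The paper also starts from the diagonal constancy of Lemma~\ref{lem:n_middle_diag_const}, but then reduces to the single position $i=k$: there, Lemma~\ref{lem:paired_size_2} gives $\#p_{I,J}=2$, and Lemma~\ref{lem:i=j=k}\ref{lem:i=j=k:paired} identifies $p_{I,J}$ with $p_{I,\emptyset}$, the characteristic function of $\{I,I^\complement\}$, so the superdiagonal value $t^{(2k,k)}_{k,j}$ is \emph{literally} the already-known subdiagonal value $t^{(2k,k)}_{k,0}$ from Lemmas~\ref{lem:subdiag_even} and~\ref{n_middle_subdiag_odd}; no separate upper and lower bounds are needed. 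You instead split the claim: monotonicity (Lemma~\ref{lem:monotonicity}) together with the diagonal value at $s=k$ gives the lower bound, the elementary bound settles even $k$, and for odd $k$ you obtain the upper bound $\leq k-1$ by writing $p_{I,J}=h+h^\perp$ with $h=f_{I,J^\complement}$ (valid by Lemma~\ref{lem:basic_elem_mod} and $n-k=k$) and observing that the substitution $X_a\mapsto 1-X_a$ multiplies the top homogeneous component of a minimal-degree representative $\gamma$ by $(-1)^k$, so the two top parts cancel; this step is sound, since monomials of $\gamma$ of degree below $k$ produce only terms of degree below $k$ after the substitution. Your argument in fact proves something more general than what the paper uses: for $n=2k$ and $k$ odd, \emph{any} function of the form $f+f^\perp$ on $\binom{V}{k}$ has degree at most $k-1$. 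What the paper's route buys is economy -- it stays entirely within the previously proved combinatorial lemmas, with the parity cancellation occurring only once, on the subdiagonal (Lemma~\ref{lem:subdiag_odd_upper_bound}); what your route buys is a structural explanation of the degree drop as a self-duality phenomenon of the middle layer, independent of the accidental two-element shape of the superdiagonal paired sets.
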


\begin{proof}
	By Lemma~\ref{lem:n_middle_diag_const}, it is enough to consider $i = k$.
	Let $I,J \subseteq V$ be disjoint with $\#I = i = k$ and $\#J = j$.
	Using $n=2k$, Lemma~\ref{lem:paired_size_2} yields $\#p_{I,J} = 2$, both for $j=k$ and $j < k$.
	From $i = k$, we get that $p_{I,J} = \{I,K_2\}$ with $J \subseteq K_2$, so the two blocks of $p_{I,J}$ are disjoint.
	Now Lemma~\ref{lem:i=j=k}\ref{lem:i=j=k:paired} shows that $p_{I,J} = p_{I,\emptyset}$, where we used $n=2k$ again.
	The application of Lemma~\ref{lem:subdiag_even} (for $k$ even) or Lemma~\ref{n_middle_subdiag_odd} (for $k$ odd) concludes the proof.
\end{proof}

Now Lemma~\ref{lem:subdiag_even}, \ref{n_middle_subdiag_odd}, and \ref{lem:n_middle_superdiag} cover all cases of Theorem~\ref{thm:paired_deg} for $n = 2k$.

\subsection{The case $n > 2k$}

We use the concept of derived and residual functions to reduce the case $n > 2k$ to the already completed case $n = 2k$.
For $n > 2k$, we have $\min(k,n-k) = k$ and hence also $t^{(n,k)}_{i,j} \leq k$.

\begin{lemma}\label{lem:paired_deg_der_res}
	Let $n > 2k$ and $i,j\in\{0,\ldots,k\}$.
	\begin{enumerate}[(a)]
		\item If $k \geq 1$, then $t^{(n-1,k-1)}_{i,j} \leq t^{(n,k)}_{i,j}$.\label{lem:paired_deg_der_res:der}
		\item $t^{(n-1,k)}_{i,j} \leq t^{(n,k)}_{i,j}$.\label{lem:paired_deg_der_res:res}
	\end{enumerate}
\end{lemma}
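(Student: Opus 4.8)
The plan is to exploit the derived and residual functions, which are tailor-made to lower the parameters $(n,k)$ by one while preserving the paired structure. The crucial observation is already recorded in Lemma~\ref{lem:paired_elem_mod}: when the point of derivation or residuation lies \emph{outside} both legs, i.e.\ $x \notin I\cup J$, the elementary modification of a paired function is again a paired function with the same leg sizes $i,j$, only on the reduced ground set $V\setminus\{x\}$. Concretely, the third branch of part~\ref{lem:paired_elem_mod:der} gives $\Der_x(p^{(V,k)}_{I,J}) = p^{(V\setminus\{x\},k-1)}_{I,J}$, and the third branch of part~\ref{lem:paired_elem_mod:res} gives $\Res_x(p^{(V,k)}_{I,J}) = p^{(V\setminus\{x\},k)}_{I,J}$. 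Since the degree can only drop under these two operations by Fact~\ref{fct:der_res}, both claimed inequalities follow at once.

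The key step is to secure a suitable $x$. Because $i,j\in\{0,\ldots,k\}$, we have $\#(I\cup J) = i+j \leq 2k < n$, so $V\setminus(I\cup J)$ is non-empty and some $x\notin I\cup J$ exists. For part~\ref{lem:paired_deg_der_res:der}, I would fix such an $x$, invoke Lemma~\ref{lem:paired_elem_mod}\ref{lem:paired_elem_mod:der} (valid since $k\geq 1$) to identify $\Der_x(p^{(V,k)}_{I,J})$ with $p^{(V\setminus\{x\},k-1)}_{I,J}$, whose degree is by definition $t^{(n-1,k-1)}_{i,j}$, and then apply Fact~\ref{fct:der_res} to conclude $t^{(n-1,k-1)}_{i,j} = \deg_{V\setminus\{x\}}\Der_x(p^{(V,k)}_{I,J}) \leq \deg_V p^{(V,k)}_{I,J} = t^{(n,k)}_{i,j}$. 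For part~\ref{lem:paired_deg_der_res:res}, I would run the mirror argument with $\Res_x$ in place of $\Der_x$, using Lemma~\ref{lem:paired_elem_mod}\ref{lem:paired_elem_mod:res}; here the residual operation is admissible because $n > 2k$ forces $n-k \geq k+1 \geq 1$, and the resulting function $p^{(V\setminus\{x\},k)}_{I,J}$ has degree $t^{(n-1,k)}_{i,j}$.

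I do not anticipate a genuine obstacle: all the mathematical content resides in the modification identities of Lemma~\ref{lem:paired_elem_mod} and in the monotonicity of the degree under derivation and residuation (Fact~\ref{fct:der_res}), both already established. The only points requiring care are bookkeeping. First, one must verify that a point $x$ outside the legs exists — this is precisely where the hypothesis $n>2k$, together with $i,j\leq k$, enters. Second, one must check the side conditions $k\geq 1$ and $n-k\geq 1$ under which $\Der_x$ and $\Res_x$ are defined. Finally, having chosen $x\notin I\cup J$, one should make sure to read off the correct, third branch of each of parts~\ref{lem:paired_elem_mod:der} and~\ref{lem:paired_elem_mod:res}, so that a clean paired function on $V\setminus\{x\}$ — rather than a single basic leg — is produced; it is exactly this clean form whose degree equals $t^{(n-1,k-1)}_{i,j}$ respectively $t^{(n-1,k)}_{i,j}$ by definition.
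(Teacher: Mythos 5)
Your proposal is correct and follows essentially the same route as the paper's own proof: choose $x \notin I \cup J$ (which exists since $i+j \leq 2k < n$), identify $\Der_x(p^{(V,k)}_{I,J})$ and $\Res_x(p^{(V,k)}_{I,J})$ with $p^{(V\setminus\{x\},k-1)}_{I,J}$ and $p^{(V\setminus\{x\},k)}_{I,J}$ via Lemma~\ref{lem:paired_elem_mod}, and conclude with the degree monotonicity of Fact~\ref{fct:der_res}. The side-condition checks you flag ($k \geq 1$, and $n-k \geq 1$ from $n > 2k$) match the paper's bookkeeping exactly.
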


\begin{proof}
	Let $I,J\subseteq V$ be disjoint of size $i = \#I$ and $j = \#J$.
	By $i + j \leq 2k < n$, there exists an $x \in V\setminus (I\cup J)$.

	For $k \geq 1$,
	\begin{multline*}
		t^{(n-1,k-1)}_{i,j}
		= \deg_{V\setminus\{x\}}(p^{(V\setminus\{x\},k-1)}_{I,J})
		\overset{\text{Lem.~\ref{lem:paired_elem_mod}\ref{lem:paired_elem_mod:der}}}{=} \deg_{V\setminus\{x\}}(\Der_x(p^{(V,k)}_{I,J})) \\
		\overset{\text{Fact~\ref{fct:der_res}}}{\leq} \deg_V(p^{(V,k)}_{I,J})
		= t^{(n,k)}_{i,j}\text{.}
	\end{multline*}
	Moreover, using that $n - k > k \geq 0$, we have
	\begin{multline*}
		t^{(n-1,k)}_{i,j}
		= \deg_{V\setminus\{x\}}(p^{(V\setminus\{x\},k)}_{I,J})
		\overset{\text{Lem.~\ref{lem:paired_elem_mod}\ref{lem:paired_elem_mod:res}}}{=} \deg_{V\setminus\{x\}}(\Res_x(p^{(V,k)}_{I,J})) \\
		\overset{\text{Fact~\ref{fct:der_res}}}{\leq} \deg_V(p^{(V,k)}_{I,J})
		= t^{(n,k)}_{i,j}\text{.}
	\end{multline*}
\end{proof}

\begin{lemma}\label{lem:n_large_subdiag_odd}
	Let $n > 2k$ and $i,j$ be non-negative integers with $i + j \leq k$ and $i + j$ odd.
	Then $t^{(n,k)}_{i,j} = i + j - 1$.
\end{lemma}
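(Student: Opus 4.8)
The plan is to establish the claimed equality by proving the matching upper and lower bounds separately, leaning heavily on the fact that the corresponding middle-layer statement (Lemma~\ref{n_middle_subdiag_odd}) has already been settled. Throughout we note that since $i+j$ is odd we have $i+j\ge 1$, and since $i+j\le k$ we automatically get $i,j\in\{0,\ldots,k\}$, so all the earlier lemmas will be applicable without further checking.

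For the \textbf{upper bound} I would simply invoke Lemma~\ref{lem:subdiag_odd_upper_bound}. Because $n>2k$ forces $\min(k,n-k)=k$, the hypothesis $i+j\le k$ is precisely the condition $i+j\le\min(k,n-k)$ required there; together with $i+j$ odd this yields $t^{(n,k)}_{i,j}\le i+j-1$ immediately.

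For the \textbf{lower bound} the idea is to reduce the general thin case $n>2k$ to the already-proven middle-layer case by iterating the residual reduction. Lemma~\ref{lem:paired_deg_der_res}\ref{lem:paired_deg_der_res:res} provides the one-step inequality $t^{(n'-1,k)}_{i,j}\le t^{(n',k)}_{i,j}$ whenever $n'>2k$. Applying it successively for every $n'$ with $2k<n'\le n$ — at each stage the applicability condition $n'>2k$ holds and the indices $i,j$ remain fixed in $\{0,\ldots,k\}$ — chains into
\[
    t^{(2k,k)}_{i,j}\le t^{(2k+1,k)}_{i,j}\le\cdots\le t^{(n,k)}_{i,j}.
\]
Since $i+j\le k$ and $i+j$ is odd, Lemma~\ref{n_middle_subdiag_odd} gives $t^{(2k,k)}_{i,j}=i+j-1$, so the chain delivers $t^{(n,k)}_{i,j}\ge i+j-1$. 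Combining this with the upper bound completes the proof.

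I do not expect a genuine obstacle here: the residual reduction and the middle-layer result dovetail cleanly, and the whole argument is a two-line sandwich of inequalities. The only point deserving a word of care is that the inequality $t^{(2k,k)}_{i,j}\le t^{(n,k)}_{i,j}$ arises from iterating the one-step residual bound rather than a single application, so one should make explicit that both the condition $n'>2k$ and the constraint $i,j\in\{0,\ldots,k\}$ persist along the entire chain — and both follow at once from $i+j\le k$.
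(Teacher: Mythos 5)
Your proposal is correct and matches the paper's own proof: both establish the upper bound via Lemma~\ref{lem:subdiag_odd_upper_bound} and the lower bound by applying Lemma~\ref{lem:paired_deg_der_res}\ref{lem:paired_deg_der_res:res} iteratively ($n-2k$ times) to reduce to the middle-layer result of Lemma~\ref{n_middle_subdiag_odd}. The paper merely writes the same sandwich of inequalities in a single displayed chain.
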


\begin{proof}
We have
\[
	i+j-1
	\overset{\text{Lem~\ref{n_middle_subdiag_odd}}}{=} t^{(2k,k)}_{i,j}
	\overset{\text{Lem~\ref{lem:paired_deg_der_res}\ref{lem:paired_deg_der_res:res}}}{\leq} t^{(n,k)}_{i,j}
	\overset{\text{Lem~\ref{lem:subdiag_odd_upper_bound}}}{\leq} i + j - 1\text{,}
\]
where Lemma~\ref{lem:paired_deg_der_res}\ref{lem:paired_deg_der_res:res} was applied $n-2k$ times.
\end{proof}

\begin{lemma}\label{lem:n_large_superdiag_even}
	Let $k$ be even, $n > 2k$ and $i,j\in\{0,\ldots,k\}$ with $i + j \geq k$.
	Then $t^{(n,k)}_{i,j} = k$.
\end{lemma}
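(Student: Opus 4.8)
The plan is to follow the same sandwiching strategy as in the proof of Lemma~\ref{lem:n_large_subdiag_odd}, trapping $t^{(n,k)}_{i,j}$ between two copies of $k$. The upper bound is immediate: as already noted at the start of this subsection, $n > 2k$ forces $\min(k,n-k) = k$, so the elementary bound in Lemma~\ref{lem:elem_bound} gives $t^{(n,k)}_{i,j} \leq \min(i+j,k) = k$, where the last equality uses $i+j \geq k$. All the work therefore goes into the matching lower bound.

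For the lower bound I would descend from ambient size $n$ down to $2k$ by repeated residual reduction. The key observation is that the residual operation leaves the parameters $i$, $j$, and $k$ untouched and only decreases $n$ by one. Consequently the hypotheses $i,j\in\{0,\ldots,k\}$ and $i+j\geq k$ persist on every intermediate slice, and each intermediate ambient size $m$ still satisfies $m > 2k$ right up until the final step lands on $2k$. Applying Lemma~\ref{lem:paired_deg_der_res}\ref{lem:paired_deg_der_res:res} a total of $n-2k$ times thus yields
\[
	t^{(2k,k)}_{i,j} \leq t^{(n,k)}_{i,j}\text{.}
\]
Since $k$ is even and $i+j\geq k$, Lemma~\ref{lem:n_middle_superdiag} evaluates the left-hand side as $t^{(2k,k)}_{i,j} = k$, giving $k \leq t^{(n,k)}_{i,j}$. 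Combining this with the upper bound yields $t^{(n,k)}_{i,j} = k$, as claimed.

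I do not expect a genuine obstacle here: the whole argument is a transfer of the middle-layer result (Lemma~\ref{lem:n_middle_superdiag}) across slices via the monotone behaviour of the degree under residuals. The single point that merits an explicit check is that the iterated residual reduction keeps us inside the regime where Lemma~\ref{lem:n_middle_superdiag} is applicable, and this holds precisely because $i$, $j$, and $k$ are invariant under $\Res_x$ while only $n$ shrinks, so no intermediate slice ever violates $i,j\in\{0,\ldots,k\}$ or $i+j\geq k$.
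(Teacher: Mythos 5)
Your proof is correct and follows essentially the same route as the paper: the upper bound $t^{(n,k)}_{i,j}\leq k$ from the elementary bound, and the lower bound by applying Lemma~\ref{lem:paired_deg_der_res}\ref{lem:paired_deg_der_res:res} a total of $n-2k$ times to reduce to the middle-layer value $t^{(2k,k)}_{i,j}=k$ from Lemma~\ref{lem:n_middle_superdiag}. Your explicit check that the hypotheses persist under the iterated residual reduction is a welcome, if minor, addition of rigor over the paper's terse chain of inequalities.
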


\begin{proof}
	We have
	\[
		k \overset{\text{Lem.~\ref{lem:n_middle_superdiag}}}{=} t^{(2k,k)}_{i,j}
		\overset{\text{Lem.~\ref{lem:paired_deg_der_res}\ref{lem:paired_deg_der_res:res}}}{\leq} t^{(n,k)}_{i,j}
		\leq k\text{,}
	\]
	where Lemma~\ref{lem:paired_deg_der_res}\ref{lem:paired_deg_der_res:res} was applied $n-2k$ times.
\end{proof}

\begin{lemma}\label{lem:n_large_superdiag_odd}
	Let $k$ be odd, $n > 2k$ and $i,j\in\{0,\ldots,k\}$ with $i + j \geq k$.
	Then $t^{(n,k)}_{i,j} = k$.
\end{lemma}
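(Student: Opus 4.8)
The upper bound is free: for $i+j\ge k$ and $n>2k$ we have $\min(i+j,k,n-k)=k$, so Lemma~\ref{lem:elem_bound} already gives $\deg p_{I,J}\le k$. Everything hinges on the matching lower bound $\deg p_{I,J}\ge k$. My first attempt would be to copy the strategy of the even case (Lemma~\ref{lem:n_large_superdiag_even}) and transport the middle-layer value upward through the residual reduction of Lemma~\ref{lem:paired_deg_der_res}\ref{lem:paired_deg_der_res:res}. For odd $k$ this fails to be sharp: it only produces $\deg p_{I,J}\ge t^{(2k,k)}_{i,j}=k-1$ by Lemma~\ref{lem:n_middle_superdiag}, i.e.\ it is off by exactly one. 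In fact every reduction that keeps $p_{I,J}$ inside the class of paired functions (residualizing or deriving at a point \emph{outside} $I\cup J$) leads back to the middle layer and stalls at $k-1$. Gaining this last unit is the crux, and I expect it to be the main obstacle.

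The idea I would pursue to overcome it is to residualize \emph{inside} a leg rather than outside $I\cup J$. Because $i+j\ge k+1$ and $j\le k$, we have $i\ge 1$, so we may choose $x\in I$. Since the residual only evaluates $p_{I,J}$ on blocks avoiding $x$, while every block of the first leg must contain $x\in I$, that leg drops out and, by Lemma~\ref{lem:paired_elem_mod}\ref{lem:paired_elem_mod:res}, the paired function collapses to a single \emph{basic} function,
\[
	\Res_x\bigl(p^{(V,k)}_{I,J}\bigr)=f^{(V\setminus\{x\},k)}_{J,\,(I\setminus\{x\})^\complement}\text{.}
\]
The decisive advantage is that the degree of a basic function is tabulated: reading Fact~\ref{fct:basic_deg} off for lower set $J$ (size $j$), upper set $(I\setminus\{x\})^\complement$ (size $n-i$) and ambient size $n-1$, the right-hand side has degree $\min\bigl(i+j-1,\,k,\,(n-1)-k\bigr)$. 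Since $i+j-1\ge k$, and since $n>2k$ forces $(n-1)-k\ge k$, this minimum equals exactly $k$.

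Combining the two halves then finishes the proof: part~(b) of Fact~\ref{fct:der_res} yields
\[
	k=\deg_{V\setminus\{x\}}\bigl(\Res_x(p_{I,J})\bigr)\le \deg_V(p_{I,J})\le k\text{,}
\]
hence $\deg p_{I,J}=k$. The conceptual point is that residualizing within a leg trades the hard-to-control degree of a \emph{sum} of two legs for the known degree of a single basic set, which already sits at the maximal value $k$ once $i+j$ exceeds the middle. I would also record that the bound furnished by this argument is $\min(i+j-1,k,(n-1)-k)$, which drops to $k-1$ precisely on the anti-diagonal $i+j=k$; there the value is genuinely $k-1$ by Lemma~\ref{lem:n_large_subdiag_odd}, so the substantive content of the statement lives on the strict superdiagonal $i+j\ge k+1$, which is exactly the range the argument above covers.
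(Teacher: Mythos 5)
Your proof is correct on the range $i+j\ge k+1$, and it takes a genuinely different---and in fact sounder---route than the paper. The paper stays inside the class of paired functions: it derives at a point outside $I\cup J$ (Lemma~\ref{lem:paired_deg_der_res}\ref{lem:paired_deg_der_res:der}) to pass to $t^{(n-1,k-1)}_{i,j}$ and then invokes the even case, Lemma~\ref{lem:n_large_superdiag_even}. But that lemma, applied with block size $k-1$ and ambient size $n-1$, yields $t^{(n-1,k-1)}_{i,j}=k-1$, not $k$ as the paper's displayed chain asserts (moreover, its hypothesis $i,j\le k-1$ need not hold when $i=k$ or $j=k$); indeed $t^{(n-1,k-1)}_{i,j}\le k-1$ is forced, since the degree of a function on a slice with block size $k-1$ never exceeds $k-1$. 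So the paper's argument only certifies $k-1\le t^{(n,k)}_{i,j}\le k$---exactly the \enquote{stalls at $k-1$} obstruction you describe for every reduction that keeps the function paired. Your escape---residualizing at $x\in I$, which kills the leg through $I$ and collapses $p_{I,J}$ to the single basic function $f^{(V\setminus\{x\},k)}_{J,(I\setminus\{x\})^\complement}$, whose degree $\min(i+j-1,k,n-1-k)=k$ is read off from Fact~\ref{fct:basic_deg}---supplies the missing unit, and Fact~\ref{fct:der_res} then closes the argument. This step, or something equivalent to it, is genuinely needed; it is not merely an alternative to the paper's proof.

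Your closing remark about the anti-diagonal is also correct and explains why the paper's proof could not have worked as stated: for $i+j=k$ (which, $k$ being odd, forces $i+j$ odd) the lemma is false. There Lemma~\ref{lem:subdiag_odd_upper_bound} gives $t^{(n,k)}_{i,j}\le k-1$, and Lemma~\ref{lem:n_large_subdiag_odd} (equivalently, the first case of Theorem~\ref{thm:paired_deg}) gives $t^{(n,k)}_{i,j}=k-1$; the paper's own Example~\ref{ex:small_paired_sets}, where the paired set with $I=\{1\}$, $J=\{6,7\}$, $i+j=3=k$ has degree $2$, confirms this. The hypothesis of the lemma should therefore be $i+j>k$; with that correction your argument proves it in full, and the case coverage of Theorem~\ref{thm:paired_deg} for $n>2k$ is unaffected, since $i+j=k$ odd is already handled by Lemma~\ref{lem:n_large_subdiag_odd}.
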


\begin{proof}
	By $k$ odd, we have $k \geq 1$ and $k-1$ even.
	Furthermore, $2(k-1) < n - 2 < n-1$.
	Hence
	\[
		k
		\overset{\text{Lem.~\ref{lem:n_large_superdiag_even}}}{=} t^{(n-1,k-1)}_{i,j}
		\overset{\text{Lem.~\ref{lem:paired_deg_der_res}\ref{lem:paired_deg_der_res:der}}}{\leq} t^{(n,k)}_{i,j}
		\leq k\text{.}
	\]
\end{proof}

Now Lemma~\ref{lem:subdiag_even}, \ref{lem:n_large_subdiag_odd}, \ref{lem:n_large_superdiag_even}, and \ref{lem:n_large_superdiag_odd} cover all cases of Theorem~\ref{thm:paired_deg} for $n > 2k$.

\subsection{The case $n < 2k$}

\begin{lemma}\label{lem:paired_dual_deg}
	For all non-negative integers $i,j$ with $i + j \leq n$, we have
	\[
		t^{(n,k)}_{i,j} = t^{(n,n-k)}_{i,j}\text{.}
	\]
\end{lemma}

\begin{proof}
Let $I,J\subseteq V$ disjoint of size $\#I = i$ and $\#J = j$.
Then
\[
	t^{(n,k)}_{i,j}
	= \deg(p^{(V,k)}_{I,J})
	\overset{\text{Fact~\ref{fct:dual_deg}}}{=} \deg((p^{(V,k)}_{I,J})^\perp)
	\overset{\text{Lem.~\ref{lem:paired_elem_mod}\ref{lem:paired_elem_mod:dual}}}{=} \deg(p^{(V,n-k)}_{I,J})
	= t^{(n,n-k)}_{i,j}\text{.}
\]
\end{proof}

Since $n < 2k$ implies $n > 2(n-k)$, Lemma~\ref{lem:paired_dual_deg} reduces the case $n < 2k$ to the already settled case $n > 2k$.
The proof of Theorem~\ref{thm:paired_deg} is complete.

\section{An application to Hartman's conjecture}\label{sec:hartman}

In design theory, Hartman's conjecture states that a halving exists as soon as its parameters are admissible~\cite[p.~223]{Hartman-1987-AnnDiscM34:207-224}.
In fact, the conjecture can be reduced to the existence of the \emph{root cases}, which are designs of the parameters considered in the following corollary.

\begin{corollary}\label{cor:hartman}
	Let $a$ be a positive integer, $k = 2^a - 1$, and $D \subseteq\binom{V}{k}$ be a design with the parameters $(k-1)-(2k,k,2^{a-1})$.
	Then $D$ is \emph{anti-complementary}, meaning that for all $K\in\binom{V}{k}$, we have $K \in D \iff K^\complement \notin D$.
\end{corollary}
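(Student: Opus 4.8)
The plan is to reduce the anti-complementary property to a single intersection count per complementary pair, and then to evaluate that count using the surprising degree drop guaranteed by Theorem~\ref{thm:paired_deg}. First I would record the elementary setup. Since the parameters force $n = 2k$, complementation $K \mapsto K^\complement$ is an involution on $\binom{V}{k}$, and the anti-complementary property is equivalent to the statement that every pair $\{K,K^\complement\}$ meets $D$ in exactly one block, i.e.\ $\#(\{K,K^\complement\} \cap D) = 1$ for all $K \in \binom{V}{k}$; indeed, $\#(\{K,K^\complement\} \cap D) = 1$ says precisely that exactly one of $K, K^\complement$ lies in $D$, which is $K \in D \iff K^\complement \notin D$. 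I would also compute the index: for a $(k-1)$-$(2k,k,\lambda)$ design one has $\lambda_{\max} = \binom{2k-(k-1)}{k-(k-1)} = k+1 = 2^a$, so with $\lambda = 2^{a-1}$ the index is $\lambda_{\max}/\lambda = 2$ and $\lambda/\lambda_{\max} = 1/2$; thus $D$ is a halving.

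The key observation is that the two-element set $A = \{K,K^\complement\}$ is itself a paired function of low degree. Using $n = 2k$ and $\#K = k$, Definition~\ref{def:paired} gives $p_{K,\emptyset} = f_{K,V} + f_{\emptyset,K^\complement} = \chi_{\{K\}} + \chi_{\{K^\complement\}} = \chi_A$, since the basic sets $f_{K,V}$ and $f_{\emptyset,K^\complement}$ each consist of a single block. Here $k = 2^a - 1$ is odd, so I would invoke Theorem~\ref{thm:paired_deg} with $i = k$ and $j = 0$: both the first branch (as $i+j = k$ is odd and $\leq \min(k,n-k) = k$) and the second branch (as $k$ is odd, $n = 2k$, and $i+j \geq k$) yield $\deg(A) = \deg p_{K,\emptyset} = k-1$. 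This is the crucial point where the degree falls strictly below the elementary bound $k$.

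With the degree in hand, the conclusion follows from design orthogonality. Applying Fact~\ref{fct:design_orthogonality} with $t = k-1$, the design $D$, and the set $A$ of degree $k-1 \leq t$, I obtain
\[
	\#(A \cap D) = \frac{\lambda}{\lambda_{\max}} \cdot \#A = \frac{1}{2} \cdot 2 = 1\text{.}
\]
Hence each pair $\{K,K^\complement\}$ contributes exactly one block to $D$, which is precisely the anti-complementary property $K \in D \iff K^\complement \notin D$.

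I expect the main obstacle to be conceptual rather than computational: recognizing that the relevant two-block set $\chi_{\{K,K^\complement\}}$ is exactly the paired function $p_{K,\emptyset}$, and that its degree is $k-1$ rather than $k$. This degree drop --- which is exactly the non-sharpness of the elementary bound isolated by Theorem~\ref{thm:paired_deg} in the case $n = 2k$ with $k$ odd --- is what permits design orthogonality to be applied at level $t = k-1$ and forces the intersection to equal $\tfrac{1}{2} \cdot 2 = 1$. The oddness of $k = 2^a - 1$ is essential here: for even $k$ the degree would be $k$, the orthogonality step at level $k-1$ would be unavailable, and the conclusion would genuinely fail.
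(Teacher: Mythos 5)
Your proof is correct and follows essentially the same route as the paper: identify $A = \{K,K^\complement\}$ with the paired function $p_{K,\emptyset}$, use Theorem~\ref{thm:paired_deg} (with $k$ odd and $n=2k$) to get $\deg(A) = k-1$, and then apply design orthogonality (Fact~\ref{fct:design_orthogonality}) to the $(k-1)$-design $D$ to conclude $\#(A\cap D) = \tfrac{1}{2}\cdot 2 = 1$. The extra details you supply — the reduction of anti-complementarity to the per-pair intersection count, the computation $\lambda_{\max} = k+1 = 2^a$, and the check that both applicable branches of Theorem~\ref{thm:paired_deg} agree — are all accurate and merely make explicit what the paper's three-line proof leaves implicit.
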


\begin{proof}
	Let $K\in\binom{V}{k}$.
	Since $k = 2^a - 1$ is odd, by Theorem~\ref{thm:paired_deg}, the set $A \coloneqq p_{K,\emptyset} = \{K,K^\complement\}$ has degree $k-1$.
	Now by Fact~\ref{fct:design_orthogonality}, $\#(A \cap D) = \frac{\lambda_{\max}}{2} \cdot \frac{1}{\lambda_{\max}} \cdot \#A = 1$.
\end{proof}

\begin{remark}
	The result of Corollary~\ref{cor:hartman} can also be derived from the intersection numbers $\alpha_i(S)$ of $D$ with respect to a set $S\in\binom{V}{k}$.
	Equation~\eqref{eq:lambda_i} yields
	\[
		\lambda_i
		= \binom{2k-i}{k-i}/\binom{k+1}{1} \cdot 2^{a-1}
		= \frac{1}{2} \binom{2k-i}{k}\text{.}
	\]
	Using~\cite[Equ.~(5.25)]{Graham-Knuth-Patashnik-1994-ConcreteMathematics2nd},%
	\footnote{
		We apply the formula in the version $\sum_{\substack{i \leq \ell}} (-1)^i \binom{r}{k+i}\binom{\ell-i}{m} = (-1)^k \binom{k+\ell-r}{k+\ell-m}$.
	}
	we compute
	\[
		\sum_{j=0}^{k} (-1)^j \binom{k}{j} \binom{2k-j}{k}
		= (-1)^0 \binom{2k-k}{2k-k} = 1
	\]
	With that preparation, we can evaluate the Köhler equation (Fact~\ref{fct:koehler}) for $i=0$:
	\begin{multline*}
		\alpha_0(S)
		= \sum_{j=0}^{k-1} (-1)^j \binom{k}{j} \lambda_j - \alpha_k(S) \\
		= \frac{1}{2}\left(\sum_{j=0}^k (-1)^j \binom{k}{j} \binom{2k-j}{k} + 1\right) - \alpha_k(S)
		= 1 - \alpha_k(S)\text{.}
	\end{multline*}
	Hence, $\{\alpha_0(S), \alpha_k(S)\} = \{0,1\}$, which means that $D$ is anti-complementary.
\end{remark}

\section{Boolean degree $t$ functions of small size}\label{sec:small_size}
Given non-negative integers $t$ and $k \leq n$, it is natural to ask for the minimum possible size $m_1(n,k,t)$ of a non-zero Boolean function $\binom{V}{k} \to \{0,1\}$ of degree at most $t$, and moreover for a classification of the associated functions.
Dualization (Fact~\ref{fct:dual_deg}) yields
\[
	m_1(n,k,t) = m_1(n,n-k,t)\text{,}
\]
such that, principally, the investigation can be restricted to $2k \leq n$.

The divisibility property in Fact~\ref{fct:divisibility} yields:
\begin{lemma}\label{lem:m1_delta}
	$m_1(n,k,t)$ is a multiple of
	\[
		\Delta = \gcd\left(\tbinom{n-0}{k-0}, \tbinom{n-1}{k-1}, \ldots, \tbinom{n-t}{k-t}\right)\text{.}
	\]
\end{lemma}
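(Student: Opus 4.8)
The plan is to apply the divisibility property (Fact~\ref{fct:divisibility}) to a size-minimal function, and then to bridge the gap between ``degree exactly $s$'' and ``degree at most $t$'' by a monotonicity observation for nested greatest common divisors.

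First I would fix a non-zero Boolean function $f : \binom{V}{k} \to \{0,1\}$ of degree at most $t$ and of minimal size, so that $\#f = m_1(n,k,t)$. Writing $s = \deg(f)$, the function $f$ is non-zero, hence $0 \leq s \leq t$, and its support $A = \supp(f)$ is a non-empty subset of $\binom{V}{k}$ of degree exactly $s$ with $\#A = m_1(n,k,t)$.

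Next I would invoke Fact~\ref{fct:divisibility} for the set $A$ of degree $s$, which yields
\[
	\Delta_s := \gcd\left(\tbinom{n-0}{k-0}, \tbinom{n-1}{k-1}, \ldots, \tbinom{n-s}{k-s}\right) \;\big|\; \#A = m_1(n,k,t)\text{.}
\]
The only remaining point is to relate $\Delta_s$ to $\Delta = \Delta_t$. Since $s \leq t$, the list of binomial coefficients defining $\Delta_s$ is an initial segment of the list defining $\Delta$; adjoining further terms to a greatest common divisor can only make it smaller in the divisibility order, so $\Delta \mid \Delta_s$. Chaining the two divisibilities gives $\Delta \mid m_1(n,k,t)$, as claimed.

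I do not expect any genuine obstacle here; the one subtlety worth flagging is that a minimizer is only guaranteed to have degree \emph{at most} $t$, not exactly $t$. This is precisely why a direct application of Fact~\ref{fct:divisibility} with the full range $0,\ldots,t$ is not immediately available, and why the nested-gcd monotonicity step $\Delta \mid \Delta_s$ is needed to complete the argument.
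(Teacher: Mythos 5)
Your proposal is correct and takes essentially the same route as the paper, which gives no written proof and simply presents the lemma as a consequence of the divisibility property in Fact~\ref{fct:divisibility}. Your additional step handling a minimizer of degree $s \leq t$ via the nested-gcd divisibility $\Delta \mid \Delta_s$ is a valid (and welcome) filling-in of a detail the paper leaves implicit.
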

The hitherto best known construction in the case $2k \leq n$ is the basic function $f_{T,V}$ for a set $T\in\binom{V}{t}$, also known as a \emph{$t$-pencil}, resulting in:
\begin{lemma}[Pencil bound]
\[
	m_1(n,k,t) \leq \binom{n-t}{\min(k,n-k)-t}\text{.}
\]
\end{lemma}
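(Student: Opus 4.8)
The plan is to prove the stated inequality by exhibiting a single non-zero Boolean function of degree at most $t$ whose size equals the right-hand side, thereby bounding the minimum $m_1(n,k,t)$ from above. By the dualization identity $m_1(n,k,t) = m_1(n,n-k,t)$ recorded just above the statement (a consequence of Fact~\ref{fct:dual_deg}), I may assume $2k \leq n$, so that $\min(k,n-k) = k$ and the target reduces to $m_1(n,k,t) \leq \binom{n-t}{k-t}$. The natural witness is the $t$-pencil itself: fix any $T\in\binom{V}{t}$ and consider the basic function $f_{T,V}$. Throughout, the bound is understood in the meaningful range $0 \leq t \leq \min(k,n-k)$, where the pencil is non-zero and has degree exactly $t$.

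The verification is then a direct application of earlier results. First, since $\#T = t \leq k \leq n$, Lemma~\ref{lem:basic}\ref{lem:basic:0} guarantees $f_{T,V} \neq \vek{0}$, and every basic function is Boolean. Next, the size formula for basic functions, $\#f_{I,J} = \binom{\#J-\#I}{k-\#I}$, applied with $I = T$ and $J = V$ gives
\[
	\#f_{T,V} = \binom{n-t}{k-t}\text{,}
\]
which matches the claimed value. Finally, Fact~\ref{fct:basic_deg} with $i = t$ and $j = n$ (so that $n-j = 0$) yields
\[
	\deg f_{T,V} = \min\bigl(t, k, n-k\bigr) = t\text{,}
\]
using $t \leq k \leq n-k$. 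Hence $f_{T,V}$ is a non-zero Boolean function of degree $t$ and size $\binom{n-t}{k-t}$, establishing the inequality in the reduced case $2k \leq n$.

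I do not anticipate any genuine obstacle: the statement is purely a construction, and the only point requiring a little care is the reduction via dualization, which justifies writing $\min(k,n-k)$ in place of $k$ so as to cover $2k > n$ simultaneously. In the original coordinates of that case, the concrete witness is the \emph{dual} $t$-pencil $(f_{T,V})^\perp = f_{\emptyset,\,T^\complement}^{(V,n-k)}$ obtained from Lemma~\ref{lem:basic_elem_mod}\ref{lem:basic:0}, which has the same degree $t$ by Fact~\ref{fct:dual_deg} and the same size by $\#f^\perp = \#f$. Thus the single bound $\binom{n-t}{\min(k,n-k)-t}$ captures both regimes uniformly.
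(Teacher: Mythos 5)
Your proof is correct and takes essentially the same route as the paper, which offers no formal proof but indicates in the preceding sentence that the $t$-pencil $f_{T,V}$ is the witnessing construction; you simply make explicit the non-vanishing, Booleanness, size, and degree computations, plus the dualization to cover $2k > n$. One minor slip: in your final paragraph the dual formula $(f_{T,V})^\perp = f^{(V,n-k)}_{\emptyset,\,T^\complement}$ comes from part (a) of Lemma~\ref{lem:basic_elem_mod}, so the citation should not point to the sublabel of Lemma~\ref{lem:basic}; the mathematics itself is fine.
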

Furthermore, there is the following lower bound:
\begin{lemma}\label{lem:m1_ge_2}
	For $t < \min(k,n-k)$,
	\[
		m_1(n,k,t) \geq 2\text{.}
	\]
\end{lemma}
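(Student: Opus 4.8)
The plan is to show that a non-zero Boolean function $f : \binom{V}{k} \to \{0,1\}$ of degree at most $t$ cannot have size exactly $1$ when $t < \min(k,n-k)$; combined with $f \neq \vek{0}$, this forces $\#f \geq 2$. A size-$1$ Boolean function is the characteristic function of a single block, i.\,e.\ $f = \chi_{\{K\}}$ for some $K\in\binom{V}{k}$. By Lemma~\ref{lem:basic}\ref{lem:basic:size1}, this is exactly the basic function $f_{K,V}$ (a $k$-pencil, since $\#K = k$). So the task reduces to determining the degree of a single-block indicator and checking that it exceeds $t$ under our hypothesis.

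The key computation is to apply Fact~\ref{fct:basic_deg} to $f_{K,V}$. Here $I = K$ with $i = \#K = k$ and $J = V$ with $j = \#V = n$. Since $i = k$ is not $> k$ and $j = n$ is not $< k$, we are in the non-trivial branch, and the formula gives
\[
	\deg f_{K,V} = \min(i + (n-j),\,k,\,n-k) = \min(k + 0,\,k,\,n-k) = \min(k,n-k)\text{.}
\]
Thus every size-$1$ Boolean function has degree exactly $\min(k,n-k)$. Under the standing hypothesis $t < \min(k,n-k)$, such a function therefore has degree strictly greater than $t$, so it is not of degree at most $t$.

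Putting this together: any Boolean $f$ with $\#f = 1$ has $\deg(f) = \min(k,n-k) > t$, so no size-$1$ function qualifies. Since $m_1(n,k,t)$ is the minimum size over \emph{non-zero} admissible functions, the minimizer has size $\geq 1$, and we have just excluded size exactly $1$; hence $m_1(n,k,t) \geq 2$, provided such a function exists at all (which it does, e.\,g.\ the $t$-pencil supplied by the Pencil bound, whose degree is $t \leq t$). I expect no real obstacle here: the only subtlety is making sure the degree of a singleton is correctly read off from Fact~\ref{fct:basic_deg} via the identification $\chi_{\{K\}} = f_{K,V}$, and confirming that the hypothesis $t < \min(k,n-k)$ is precisely what rules out the singleton case. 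One could alternatively invoke Fact~\ref{fct:classification_degree_1} together with the divisibility machinery, but the direct evaluation of Fact~\ref{fct:basic_deg} on $f_{K,V}$ is the cleanest route.
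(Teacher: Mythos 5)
Your proof is correct, but it takes a genuinely different route from the paper. You compute the degree of a size-$1$ function directly: identifying $\chi_{\{K\}} = f_{K,V}$ and evaluating Fact~\ref{fct:basic_deg} with $i=k$, $j=n$ gives $\deg\chi_{\{K\}} = \min(k,n-k)$ exactly, which exceeds $t$ by hypothesis, so no singleton qualifies. The paper instead argues by contradiction through Fact~\ref{fct:deg_linear_combination}\ref{fct:deg_linear_combination:sum}: if some singleton had degree at most $t$, then (since the symmetric group acts transitively on blocks, all singletons have the same degree) every Boolean function, being a sum of singletons, would have degree at most $t$, contradicting the existence of basic functions of degree $t+1$ guaranteed by $t+1 \leq \min(k,n-k)$. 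Your argument is more direct and yields slightly more information -- the exact degree $\min(k,n-k)$ of every size-$1$ function -- whereas the paper's argument is softer: it only needs the existence of \emph{some} Boolean function of degree exceeding $t$, not the precise degree of singletons, and so it would survive in settings where the exact formula of Fact~\ref{fct:basic_deg} is unavailable. Two minor points: your citation of Lemma~\ref{lem:basic}\ref{lem:basic:size1} is not really needed, since $\mathcal{F}_{\!K,V} = \{K\}$ is immediate from the definition; and your closing remark about existence of a degree-$\leq t$ function (the $t$-pencil) is a good instinct, though strictly speaking the inequality $m_1 \geq 2$ only requires excluding size $1$.
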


\begin{proof}
	Assume that there exists a function $f : \binom{V}{k} \to \{0,1\}$ of degree $t$ and size $1$.
	Then by Fact~\ref{fct:deg_linear_combination}\ref{fct:deg_linear_combination:sum}, all Boolean functions $\binom{V}{k} \to \{0,1\}$ are of degree at most $t$.
	This contradicts the fact that, by $t < \min(k,n-k)$, there exist basic functions $\binom{V}{k} \to \{0,1\}$ of degree $t+1$.
\end{proof}

Clearly, $m_1(n,k,0) = \binom{n}{k}$ (remember that all functions of degree $0$ are constant) and for all $t \geq \min(k,n-k)$, we have $m_1(n,k,t) = 1$.
Moreover, by the classification of the functions of degree $1$ in Fact~\ref{fct:classification_degree_1}, we have $m_1(n,k,1) = \binom{n-1}{\min(k,n-k)-1}$ for $\min(k,n-k) \neq 0$.
In all these cases, $t$-pencils or their duals provide minimal examples, so the pencil bound is sharp.
Remarkably, in certain situations, paired sets of degree $t$ attain or even fall below the pencil bound.

In the following, we assume $2k \leq n$ (i.e. $\min(k,n-k) = k$) and $t\leq k$, which, by the above discussion, is actually not a restriction.

\newpage

\begin{theorem}\label{thm:paired_vs_pencil}
	Let $k \leq 2n$, $t\in\{1,\ldots,k-1\}$, $I,J\subseteq V$ be disjoint of size $i = \#I$ and $j = \#J$ and $p_{I,J} : \binom{V}{k} \to \R$ a paired function of degree $t$.
	Then $p_{I,J}$ is smaller than a $t$-pencil precisely in the following cases:
	\begin{enumerate}[(i)]
		\item\label{thm:paired_vs_pencil:improve_main} $t$ even and $n = 2k$ and $\{i,j\} = \{t+1,0\}$, where $\#p_{I,J} = 2\cdot \binom{2k-t-1}{k}$.
		If additionally $k=t+1$ (where $\#p_{I,J} = 2$), then there are the alternative representations $\{i,j\} = \{t+1,\ell\}$ with $\ell \in \{0,\ldots,t+1\}$.

		\item\label{thm:paired_vs_pencil:improve_superdiagonal} $t$ even and $n = 2k$ and $k = t+1$ and $i,j\in\{1,\ldots,k-1\}$ with $i+j > k$ and $\binom{2k-i-j}{k-i} < \frac{k+1}{2}$, where $\#p_{I,J} = 2\binom{2k-i-j}{k-i}$.

	\end{enumerate}
	Moreover, $p_{I,J}$ has the same size as a $t$-pencil precisely in the following cases:
	\begin{enumerate}[resume*]
		\item\label{thm:paired_vs_pencil:equalize_main} $t$ even and $t \leq k$ and $n=2k + 1$ and $\{i,j\} = \{t+1,0\}$.

		\item\label{thm:paired_vs_pencil:equalize_superdiagonal} $t$ even and $n = 2k$ and $k = t+1$ and $i,j\in\{1,\ldots,k-1\}$ with $i+j \geq k$ and $\binom{2k-i-j}{k-i} = \frac{k+1}{2}$.
	\end{enumerate}
\end{theorem}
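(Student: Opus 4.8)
The plan is to fix the $t$-pencil size, which under the standing assumption $2k\le n$ equals $\binom{n-t}{k-t}$, and to compare it with the paired size $\#p_{I,J}=\binom{n-i-j}{k-i}+\binom{n-i-j}{k-j}$ furnished by Lemma~\ref{lem:p_size}. The first step is to use Theorem~\ref{thm:paired_deg} to translate the hypothesis $\deg p_{I,J}=t$ into constraints on $i$ and $j$. Inspecting the three branches shows that every degree attainable in the range $\{1,\ldots,k-1\}$ is \emph{even}: the first branch yields $i+j-1$ with $i+j$ odd, the second yields $k-1$ with $k$ odd, and the third (being the genuinely ``otherwise'' case with $i+j<k$) yields $i+j$ with $i+j$ even. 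Consequently the theorem is vacuous for odd $t$, and for even $t$ the degree equals $t$ precisely when $s:=i+j\in\{t,t+1\}$, with the single exception of the middle-layer case $n=2k$ with $k$ odd, equivalently $t=k-1$, where in addition every $s\ge k$ occurs. I would also record that for $n=2k$ the two legs are complementary in size, so that $\#p_{I,J}=2\binom{2k-s}{k-i}$.

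Next I would reduce every comparison to elementary binomial identities, the workhorses being Pascal's rule, the reflection $\binom{m}{r}=\binom{m}{m-r}$, and the unimodality of $r\mapsto\binom{m}{r}$ (decide signs by comparing distances of the two arguments to the center $m/2$). For $s=t$ I expect to dispose of all configurations at once: since $k-t$ minimizes $\binom{n-t}{r}$ over the relevant range $r\in\{k-t,\ldots,k\}$ (the endpoint $k-t$ being at least as far from the center as $k$), each single leg already satisfies $\binom{n-t}{k-i}\ge\binom{n-t}{k-t}$, whence $\#p_{I,J}\ge 2\binom{n-t}{k-t}>\binom{n-t}{k-t}$ and these never qualify. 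For $s=t+1$ with $\{i,j\}=\{t+1,0\}$, splitting the pencil via $\binom{n-t}{k-t}=\binom{n-t-1}{k-t}+\binom{n-t-1}{k-t-1}$ reduces the difference $\#p_{I,J}-\binom{n-t}{k-t}$ to the sign of $\binom{n-t-1}{k}-\binom{n-t-1}{k-t}$, which I would compute to be negative for $n=2k$ (yielding case~\ref{thm:paired_vs_pencil:improve_main} with size $2\binom{2k-t-1}{k}$), zero for $n=2k+1$ (case~\ref{thm:paired_vs_pencil:equalize_main}), and positive for $n\ge 2k+2$.

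Finally I would treat the middle-layer threshold and the completeness (``precisely'') direction. In the special case $t=k-1$, $k$ odd, $n=2k$, the pencil collapses to $\binom{k+1}{1}=k+1$, so for $s\ge t+2$ the inequalities $\#p_{I,J}=2\binom{2k-i-j}{k-i}\lessdot k+1$ become exactly the threshold condition $\binom{2k-i-j}{k-i}<\tfrac{k+1}{2}$ (case~\ref{thm:paired_vs_pencil:improve_superdiagonal}) respectively $=\tfrac{k+1}{2}$ (case~\ref{thm:paired_vs_pencil:equalize_superdiagonal}); here I would also invoke Lemma~\ref{lem:i=j=k}\ref{lem:i=j=k:paired} to record the alternative representations $\{i,j\}=\{t+1,\ell\}$ of the size-$2$ functions arising when $k=t+1$. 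The remaining obligation, and the step I expect to be the main obstacle, is showing that all unlisted configurations give $\#p_{I,J}>\binom{n-t}{k-t}$ strictly. The delicate sub-case is $s=t+1$ with \emph{both} $i,j\ge 1$: here the summand $\binom{n-t-1}{k-i}+\binom{n-t-1}{k-t-1+i}$ must be bounded below, and for $n=2k$ this works cleanly because the minimum over $i$ is attained at the endpoints and equals $2\binom{2k-t-1}{k-1}>\binom{2k-t}{k}$ (using $\binom{2k-t-1}{k-1}>\binom{2k-t-1}{k}$ for $t\ge 2$), but for $n=2k+1$ and $n\ge 2k+2$ the two legs are no longer symmetric, so tracking where the summed binomial is minimized across these regimes is the part requiring genuine care rather than a one-line symmetry argument.
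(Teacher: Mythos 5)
Your use of Theorem~\ref{thm:paired_deg} to constrain the configurations is correct and matches the paper's strategy: the degrees attainable in $\{1,\ldots,k-1\}$ are indeed all even, and the degree-$t$ configurations are exactly $s=i+j\in\{t,t+1\}$, plus all $s\ge k$ when $n=2k$ and $t=k-1$. Your disposal of $s=t$ (each leg alone is already at least a pencil), of $s=t+1$ with $\{i,j\}=\{t+1,0\}$ (Pascal split of the pencil and the sign of $\binom{n-t-1}{k}-\binom{n-t-1}{k-t}$, giving cases~\ref{thm:paired_vs_pencil:improve_main} and~\ref{thm:paired_vs_pencil:equalize_main}), and of the middle-layer threshold cases~\ref{thm:paired_vs_pencil:improve_superdiagonal} and~\ref{thm:paired_vs_pencil:equalize_superdiagonal} are all sound and amount to the paper's own computations in a different packaging.

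However, there is a genuine gap, and you flagged it yourself without closing it: the completeness direction in the sub-case $s=t+1$ with $\min(i,j)\ge 1$ and $n>2k$. Such configurations exist and have degree $t$ (e.g.\ $n=9$, $k=4$, $i=2$, $j=1$, $t=2$), so the word \enquote{precisely} obliges you to prove $\#p_{I,J}>\binom{n-t}{k-t}$ strictly for all of them; your symmetry argument only covers $n=2k$, and deferring the asymmetric case as \enquote{requiring genuine care} leaves the characterization unproven for every $n>2k$. The paper closes exactly this case in its Case~1 by a uniform comparison: with $j\le i$, set $a_1=\binom{n-s}{k-i}$, $a_2=\binom{n-s}{k-j}$ (the legs) and $b_1=\binom{n-s}{k-s}$, $b_2=\binom{n-s}{k-s+1}$ (the Pascal split of the pencil), pass to \emph{normalized} representations $\binom{u}{v}$ with $v\le u-v$, and show $a_1\ge b_1$ with equality iff $j=0$, while $a_2<b_2$ only for $(n,j)=(2k,0)$ and $a_2=b_2$ only for $(n,j)\in\{(2k,1),(2k+1,0)\}$. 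Your own toolkit also suffices if you cross-pair instead: since $k-s+1\le k-i\le (n-s)/2$, monotonicity below the center gives $a_1\ge b_2$; and since $\lvert (k-j)-(n-s)/2\rvert = \lvert (i-j)-(n-2k)\rvert/2 < \bigl((n-2k)+s\bigr)/2 = (n-s)/2-(k-s)$ whenever $j\ge 1$, strict unimodality gives $a_2>b_1$; summing yields the needed strict inequality for all $n\ge 2k$ at once. Some such argument must be supplied before the proof is complete.
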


\begin{proof}
	We assume that $\#p_{I,J}$ is less or equal the size of a $t$-pencil and, without restriction, $j \leq i$.
	The condition $t\notin\{0,-\infty\}$ is equivalent to $p_{I,J}$ being non-constant.
	Hence, $p_{I,J}$ is non-empty and neither equals $\vek{1}$ nor $2\cdot \vek{1}$.
	In particular, $p_{I,J}$ is Boolean and $\mathcal{P}_{I,J}$ is the disjoint union of two basic sets, and we have $i + j \geq 2$ and thus $i \geq 1$.
	We continue by examining the three possible cases in Theorem~\ref{thm:paired_deg}.

	\paragraph{Case 1.}
	Here, $i+j$ is odd, $i+j \leq k$, and $t = i+j-1$.
	We have $i \geq 2$, since $i = 1$ combined with $j \leq i$ and $i + j \geq 2$ yields $j = 1$, in contradiction to $i + j$ being odd.
	Therefore,
	\[
		\begin{tblr}{colsep=0pt,column{2}={mode=math,colsep=4pt},column{4}={colsep=3pt},column{6}={colsep=4pt},colspec={rcccccc}}
		\#p_{I,J} & = & \dbinom{n-i-j}{k-i} & + & \dbinom{n-i-j}{k-j} & \eqqcolon & a_1 + a_2\text{,} \\
		\text{size of $t$-pencil}
		= \dbinom{n-t}{k-t}
		& = & \dbinom{n-i-j}{k-i-j} & + & \dbinom{n-i-j}{k-i-j+1} & \eqqcolon & b_1 + b_2\text{,}
		\end{tblr}
	\]
	We call the representation $\binom{u}{v}$ of a binomial coefficient \emph{normalized} if $v \leq u-v$.
	For normalized binomial coefficients, we have $\binom{u}{v} \leq \binom{u}{w}$ if and only if $v \leq w$, with equality $\binom{u}{v} = \binom{u}{w}$ if and only if $v = w$.

	We will call $a_1$ (or $a_2$, $b_1$, $b_2$) normalized if the binomial coefficient representation in the above definition is normalized.
	By
	\begin{align*}
		(n - i - j) - (k - i) & = (n-k) - j \geq k - i\text{,} \\
		(n-i-j) - (k-i-j) & = n-k \geq k > k-i-j\quad\text{ and } \\
		(n-i-j) - (k-i-j+1) & = (n-k)-1 \geq k - 1 \overset{i+j\geq 2}{\geq} k-i-j+1\text{,}
	\end{align*}
	the binomial coefficients $a_1$, $b_1$ and $b_2$ are normalized.
	\newpage

	By $k-i \geq k-i-j$, we have $a_1 \geq b_1$.
	Equality $a_1 = a_2$ arises if and only if $k-i = k-i-j$, which is equivalent to $j=0$.

	For the comparison of $a_2$ with $b_2$, we distinguish two cases.
	\begin{itemize}
		\item If $a_2$ is normalized, we have $a_2 > b_2$.

		The reason is that $i \geq 1$ implies $k-j \geq k-i-j+1$, so $a_2 \geq b_2$.
		The equality case $a_2 = b_2$ is equivalent to $i=1$, which is not possible.

		\item Assume that $a_2$ is not normalized.

		Then the representation $a_2 = \binom{n-i-j}{(n-i-j)-(k-j)} = \binom{n-i-j}{n-k-i}$ is normalized.
		Hence $a_2 \leq b_2$ if and only if $n-k-i \leq k-i-j+1$, which is equivalent to $n-2k \leq 1-j$.
		Therefore
		\begin{align*}
			a_2 < b_2 & \iff (n,j) = (2k,0)\quad\text{and} \\
			a_2 = b_2 & \iff (n,j) \in \{(2k,1),(2k+1,0)\}\text{.}
		\end{align*}
		Note that, in all these cases, $a_2$ is indeed not normalized.
	\end{itemize}
	Hence, for $n=2k$ and $j=0$, we have $a_1 = b_1$ and $a_2 < b_2$, leading to the main representation in Case~\ref{thm:paired_vs_pencil:improve_main}.
	In all other cases, $a_1 \geq b_1$ and $a_2 \geq b_2$.
	The equality $a_1 + a_2 = b_1 + b_2$ is equivalent to $n = 2k+1$ and $j=0$, which is Case~\ref{thm:paired_vs_pencil:equalize_main}.

	\paragraph{Case 2.}
	Here, $k$ is odd, $n=2k$, $i + j \geq k$, and $t = k-1$.
	By $t\neq 0$, we have $k \neq 1$ and thus $k \geq 3$.
	We can assume $i + j > k$, as the equality case is already covered by Case~1.
	The size of $p_{I,J}$ is $\#p_{I,J} = 2\binom{2k-i-j}{k-i}$, and the size of the relevant $t$-pencil is $\binom{n-t}{k-t} = \binom{2k - (k-1)}{k - (k-1)} = k+1$.
	For $i=k$, we get $\#p_{I,J} = 2 < k+1$.
	The resulting paired sets are the alternative representations in Case~\ref{thm:paired_vs_pencil:improve_main}.
	For $i < k$, we have $i,j\in\{1,\ldots,k-1\}$, and we get the condition $\binom{2k-i-j}{k-i} \leq \frac{k+1}{2}$.
	Thus, we found Case~\ref{thm:paired_vs_pencil:improve_superdiagonal} and Case~\ref{thm:paired_vs_pencil:equalize_superdiagonal}.

	\paragraph{Case 3.}
	Here, $t = \min(k,i+j)$, and both legs are basic sets of degree $t$.
	Since $p_{I,J}$ has at most the size of a $t$-pencil, which (using $2k \leq n$) is the smallest possible size of a basic set of degree $t$,%
	\footnote{
		The $t$-pencils are unique with this property, except in the case $n = 2k$, where also the duals of $t$-pencils are possible.
	}
	exactly one of the legs is empty (and the second is a $t$-pencil or its dual).
	So, by Lemma~\ref{lem:p_legs}\ref{lem:p_legs:one_empty} and $j \leq i$, we have $k < i$.
	This yields the contradiction $t = \min(k,i+j) = k$.
\end{proof}

\begin{remark}
	We investigate the condition $\binom{2k-i-j}{k-i} = \frac{k+1}{2}$ from Theorem~\ref{thm:paired_vs_pencil}\ref{thm:paired_vs_pencil:equalize_superdiagonal}.
	The substitution of $i$ and $j$ with $a = k-i$ and $b=k-j$ yields the parametrization
	\[
		k = 2\binom{a+b}{a} - 1
		\quad\text{and}\quad i = k-a
		\quad\text{and}\quad j = k-b
	\]
	of the solutions where $a$ and $b$ are any two positive integers.

	Now let us have a closer look at both the inequality and the equality condition from Theorem~\ref{thm:paired_vs_pencil}\ref{thm:paired_vs_pencil:improve_superdiagonal} and~\ref{thm:paired_vs_pencil:equalize_superdiagonal} in two special cases.
	\begin{enumerate}[(a)]
	\item
	For $i = k-1 = t$, the combined condition is $k+1-j \leq \frac{k+1}{2}$, which is equivalent to $j \geq \frac{k+1}{2} = \frac{t}{2} + 1$.
	Hence, we identified the following subcase of Theorem~\ref{thm:paired_vs_pencil}\ref{thm:paired_vs_pencil:improve_superdiagonal} and~\ref{thm:paired_vs_pencil:equalize_superdiagonal}:

	Let $t$ be even, $k = t+1$, and $n = 2k$.
	For $\{i,j\} = \{t,\frac{t}{2} + 1\}$, the paired set $p_{I,J}$ has the same size as a $t$-pencil, and for $\{i,j\} = \{t,\ell\}$ with $\ell\in\{\frac{t}{2} + 2,\ldots, t-1\}$, the size $\#p_{I,J}$ is strictly smaller than that of a $t$-pencil.

	\item
	For $i = k-2 = t-1$, we get the combined condition $\binom{k + 2 - j}{2} \leq \frac{k+1}{2}$, resulting in the quadratic inequality
	\[
		\left(\frac{2k+3}{2} - j\right)^{\!2} \leq k + \frac{5}{4}
	\]
	which is equivalent to
	\[
		j \geq \frac{2k + 3}{2} - \sqrt{k + \frac{5}{4}}\text{.}
	\]
	By the above considerations with $a = k-i = 2$, the equality cases are given by the pairs $(k,j) = b^2 + 3b + 1, b^2 + 2b + 1)$ with a positive integer $b$.
	In terms of $c = b+1$, this is simplified slightly to the expression $(k,j) = (c^2 + c - 1, c^2)$ with an integer $c \geq 2$.
	\end{enumerate}
\end{remark}

\begin{corollary}\label{cor:small_middle_layer_sets}
	Let $t\in\{0,\ldots,k-1\}$ be even.
	Then
	\[
		m_1(2k,k,t) \leq 2\cdot\binom{2k-t-1}{k} = 2\cdot\binom{2k-t-1}{k-t-1}\text{.}
	\]
\end{corollary}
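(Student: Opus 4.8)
The plan is to exhibit one explicit paired function on $\binom{V}{k}$ of degree at most $t$ whose size equals $2\binom{2k-t-1}{k}$. Since $m_1(2k,k,t)$ is by definition the minimum size over all non-zero Boolean functions of degree at most $t$, producing such a function immediately yields the claimed upper bound. Concretely, I would fix a set $I\subseteq V$ with $\#I = t+1$ and put $J = \emptyset$, and then study the paired function $p_{I,\emptyset}$, whose two legs are the $(t+1)$-pencil $f_{I,V}$ and the dual pencil $f_{\emptyset,I^\complement}$.

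First I would pin down the degree using Theorem~\ref{thm:paired_deg} with $n = 2k$. Here $i + j = t+1$, which is odd because $t$ is even, and the hypothesis $t \leq k-1$ gives $i + j = t+1 \leq k = \min(k,n-k)$. Thus the first case of Theorem~\ref{thm:paired_deg} applies, so $\deg p_{I,\emptyset} = (i+j)-1 = t$, and in particular $\deg p_{I,\emptyset} \leq t$.

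Next I would compute the size via Lemma~\ref{lem:p_size}. With $i = t+1$, $j = 0$, and $n = 2k$ this gives
\[
	\#p_{I,\emptyset}
	= \binom{2k-t-1}{k-t-1} + \binom{2k-t-1}{k}\text{.}
\]
The two binomial coefficients are equal, since $(2k-t-1)-(k-t-1) = k$, so $\#p_{I,\emptyset} = 2\binom{2k-t-1}{k} = 2\binom{2k-t-1}{k-t-1}$, matching both forms in the statement. It remains only to check that $p_{I,\emptyset}$ is a genuine non-zero Boolean function: it is Boolean because $I \neq \emptyset$ (see the discussion following Lemma~\ref{lem:paired_pascal_split}), and it is non-zero because its size $2\binom{2k-t-1}{k}$ is positive, as $2k-t-1 \geq k$ by $t \leq k-1$.

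There is essentially no hard step here: the corollary is a direct specialization of the size formula together with the degree computation, and it records exactly the situation of Theorem~\ref{thm:paired_vs_pencil}\ref{thm:paired_vs_pencil:improve_main}. The only point requiring a moment's care is the boundary value $t = 0$, where $\{i,j\} = \{1,0\}$ forces $p_{I,\emptyset} = \vek{1}$ by Equation~\eqref{eq:paired_i+j=1}; the formula still holds, since $\#\vek{1} = \binom{2k}{k} = 2\binom{2k-1}{k}$, so no separate argument is needed.
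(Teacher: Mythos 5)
Your proposal is correct and follows exactly the paper's approach: the paper's one-line proof also takes the paired function $p^{(2k,k)}_{t+1,0}$ (i.e.\ $i=t+1$, $j=0$), whose degree is $t$ by Theorem~\ref{thm:paired_deg} and whose size is $2\binom{2k-t-1}{k}$ by Lemma~\ref{lem:p_size}. Your write-up is in fact slightly more careful than the paper's, which omits the factor $2$ in the stated size and the routine checks of Booleanness, non-vanishing, and the boundary case $t=0$ that you verify explicitly.
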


\begin{proof}
	The paired construction with $i=t+1$ and $j=0$ has degree $t$ and size $\binom{2k-t-1}{k}$.
\end{proof}

\begin{remark}
	For $t=0$, the paired construction in the proof of Corollary~\ref{cor:small_middle_layer_sets} is the full set $\binom{V}{k}$, which is also a $0$-pencil.
	As already seen in Theorem~\ref{thm:paired_vs_pencil}\ref{thm:paired_vs_pencil:improve_main}, in all other cases, i.\,e. for $t\in\{1,\ldots,k-1\}$, Corollary~\ref{cor:small_middle_layer_sets} improves the pencil bound.
\end{remark}

\begin{corollary}\label{cor:m1=2}
	For $k$ odd
	\[
		m_1(2k,k,k-1) = 2\text{.}
	\]
\end{corollary}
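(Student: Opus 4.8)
The plan is to prove equality by establishing matching lower and upper bounds, both of which fall out of results already available in the excerpt. The essential observation that makes everything work is a parity one: when $k$ is odd, the target degree $t = k-1$ is \emph{even}, which is exactly the regime in which the paired construction beats the pencil bound and produces a set of size~$2$.

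For the lower bound I would invoke Lemma~\ref{lem:m1_ge_2}. With $n = 2k$ we have $\min(k,n-k) = k$, and since $t = k-1 < k$, the hypothesis $t < \min(k,n-k)$ holds. Hence $m_1(2k,k,k-1) \geq 2$.

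For the upper bound I would apply Corollary~\ref{cor:small_middle_layer_sets}, which is legitimate precisely because $t = k-1$ is even. Substituting $t = k-1$ gives
\[
	m_1(2k,k,k-1) \leq 2\binom{2k-(k-1)-1}{k} = 2\binom{k}{k} = 2.
\]
Equivalently, and perhaps more transparently, one can exhibit the witnessing function directly: for any $K \in \binom{V}{k}$ the paired set $p_{K,\emptyset} = \{K, K^\complement\}$ (using $n = 2k$) has size $\#p_{K,\emptyset} = \binom{k}{0} + \binom{k}{k} = 2$ by Lemma~\ref{lem:p_size}, and by the second case of Theorem~\ref{thm:paired_deg} (here $k$ odd, $n = 2k$, and $i+j = k \geq k$) it has degree exactly $k-1$. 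This is the same degree-$(k-1)$ size-$2$ set already used in the proof of Corollary~\ref{cor:hartman}.

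Combining the two bounds yields $m_1(2k,k,k-1) = 2$. I expect no genuine obstacle in this argument: the statement is a corollary precisely because it repackages Theorem~\ref{thm:paired_deg} together with the elementary bounds of Section~\ref{sec:small_size}. The only point requiring care is the parity bookkeeping that identifies $t = k-1$ as even, which is what licenses the use of the even-degree constructions.
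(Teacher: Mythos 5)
Your proof is correct and matches the paper's own argument exactly: the lower bound $m_1(2k,k,k-1)\geq 2$ from Lemma~\ref{lem:m1_ge_2} combined with the upper bound from Corollary~\ref{cor:small_middle_layer_sets} at $t=k-1$ (even, since $k$ is odd). The explicit witness $p_{K,\emptyset}=\{K,K^\complement\}$ you add merely unpacks the construction underlying Corollary~\ref{cor:small_middle_layer_sets} and is a fine, if redundant, addition.
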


\begin{proof}
	By Lemma~\ref{lem:m1_ge_2}, $m_1(2k,k,k-1) \geq 2$, and by Corollary~\ref{cor:small_middle_layer_sets}, $m_1(2k,k,k-1) \leq 2$.
\end{proof}

For an assessment of the above bounds, we examined the function $m_1(n,k,t)$ further.
The results are summarized in Tables~\ref{tbl:t2}, \ref{tbl:t3}, and \ref{tbl:t4}.
For given parameters $n, k, t$ as specified in the first three columns, the columns \enquote{\#pencil} and \enquote{\#paired} list the smallest possible size of a pencil (i.\,e., $\#f_{t,0}$) and of a paired function (which is $\#p_{t+1,0}$ for $t$ even, and $\#p_{t,0}$ for $t$ odd), respectively.%
\footnote{
	We have $\#\text{pencil} = \#f_{t,0} = \binom{n-t}{k-t}$, and $\#\text{paired} = \#p_{t+1,0} = \binom{n-t-1}{k} + \binom{n-t-1}{k-t-1}$ when $t$ is even, and $\#\text{paired} = \#p_{t,0} = \binom{n-t}{k} + \binom{n-t}{k-t}$ when $t$ is odd.
	For the symbols $f_{t,0}$ and $p_{t,0}$, see Footnote~\ref{footnote:lazy_p}.
}
In our investigations, we never encountered any instance smaller than both a pencil and a paired construction, so we aimed to establish good lower bounds on $m_1(n,k,t)$ computationally.
We applied the following three methods, listed in order of growing complexity.
\begin{itemize}
	\item In the column \enquote{$\Delta$}, we evaluated the divisibility condition $\Delta \mid m_1(n,k,t)$ from Lemma~\ref{lem:m1_delta}.
	In a few cases, this already determines $m_1(n,k,t)$.
	\item In the column \enquote{LP}, we evaluated the LP bound from Lemma~\ref{lem:lp_bound}, which is a lower bound to $m_1(n,k,t)$.
	\item In case the combination of the above two restrictions still do not close the gap to the size of the best-known construction, we formulated the geometric property \cite[Th.~3.5(iii)]{Kiermaier-Mannaert-Wassermann-2025-JCTSA212:P105979} of sets of degree $t$ as a Diophantine linear equation system and applied the LLL-based solver \texttt{solvediophant}%
	\footnote{
		The solver is accessible at \url{https://github.com/alfredwassermann/solvediophant}.
	}
	\cite{Wassermann-2021-LNCS12757:20-33} to compute the smallest multiple of $\Delta$ for which a solution of that size exists.%
	\footnote{
		Details of this method will be published in a forthcoming article presenting a full classification of all sets of degree $t$ in small feasible cases.
	}
	The result is documented in the column \enquote{\texttt{sd}}.
	A missing entry indicates that applying this method was unnecessary, while a \enquote{?} signifies that the problem turned out to be too large for this approach.
\end{itemize}
The last column summarizes the remaining range or exact value of $m_1(n,k,t)$.

\begin{table}
	\caption{Computational investigation of $m_1$ for $t=2$.}\label{tbl:t2}
	\[
		\begin{tblr}{
			colspec = {rrr@{\hskip 10mm}rr@{\hskip 10mm}rrr@{\hskip 10mm}r},
		}
			\toprule
			n  & k & t & \#\text{pencil} & \#\text{paired} & \Delta & \text{LP} & \text{\texttt{sd}} & m_1(n,k,t) \\
			\midrule
			6  & 3 & 2 &   4 &   2 &  2     & 2.00            &            & 2       \\
			7  & 3 & 2 &   5 &   5 &  5     & 5.00            &            & 5       \\
			8  & 3 & 2 &   6 &  11 &  1     & 6.00            &            & 6       \\
			9  & 3 & 2 &   7 &  21 &  7     & 7.00            &            & 7       \\
			10 & 3 & 2 &   8 &  36 &  4     & 8.00            &            & 8       \\
			11 & 3 & 2 &   9 &  57 &  3     & 9.00            &            & 9       \\
			12 & 3 & 2 &  10 &  85 &  5     & 10.00           &            & 10      \\
			13 & 3 & 2 &  11 & 121 & 11     & 11.00           &            & 11      \\
			\midrule
			8  & 4 & 2 &  15 &  10 &  5     & 6.67            &            & 10      \\
			9  & 4 & 2 &  21 &  21 &  7     & 9.33            & 21         & 21      \\
			10 & 4 & 2 &  28 &  42 & 14     & 14.00           & 28         & 28      \\
			11 & 4 & 2 &  36 &  78 &  6     & 22.00           & 36         & 36      \\
			12 & 4 & 2 &  45 & 135 & 15     & 36.82           &            & 45      \\
			13 & 4 & 2 &  55 & 220 & 55     & 55.00           &            & 55      \\
			\midrule
			10 & 5 & 2 &  56 &  42 & 14     & 18.67           & 42         & 42      \\
			11 & 5 & 2 &  84 &  84 & 42     & 42.00           & 84         & 84      \\
			12 & 5 & 2 & 120 & 162 &  6     & 57.00           & 120        & 120     \\
			13 & 5 & 2 & 165 & 297 & 33     & 77.00           & 165        & 165     \\
			\midrule
			12 & 6 & 2 & 210 & 168 & 42     & 63.00           & 168        & 168     \\
			13 & 6 & 2 & 330 & 330 & 66     & 99.00           & ?          & 99\text{--}330  \\
			14 & 6 & 2 & 495 & 627 & 33     & 165.00          & ?          & 165\text{--}495 \\
			\bottomrule
		\end{tblr}
	\]
\end{table}

\begin{table}
	\caption{Computational investigation of $m_1$ for $t=3$.}\label{tbl:t3}
	\[
		\begin{tblr}{
			colspec = {rrr@{\hskip 10mm}rr@{\hskip 10mm}rrr@{\hskip 10mm}r},
		}
			\toprule
			n  & k & t & \#\text{pencil} & \#\text{paired} & \Delta & \text{LP} & \text{\texttt{sd}} & m_1(n,k,t) \\
			\midrule
			8  & 4 & 3 &   5 &   10 &  5      & 5.00            &            & 5          \\
			9  & 4 & 3 &   6 &   21 &  1      & 6.00            &            & 6          \\
			10 & 4 & 3 &   7 &   42 &  7      & 7.00            &            & 7          \\
			11 & 4 & 3 &   8 &   78 &  2      & 8.00            &            & 8          \\
			12 & 4 & 3 &   9 &  135 &  3      & 9.00            &            & 9          \\
			13 & 4 & 3 &  10 &  220 &  5      & 10.00           &            & 10         \\
			\midrule
			10 & 5 & 3 &  21 &   42 &  7      & 9.33            & 21         & 21         \\
			11 & 5 & 3 &  28 &   84 & 14      & 14.00           & 28         & 28         \\
			12 & 5 & 3 &  36 &  162 &  6      & 22.00           & 36         & 36         \\
			13 & 5 & 3 &  45 &  297 &  3      & 35.00           & ?          & 36\text{--}45      \\
			\midrule
			12 & 6 & 3 &  84 &  168 & 42      & 42.00           & ?          & 42\text{--}84      \\
			13 & 6 & 3 & 120 &  330 &  6      & 57.00           & ?          & 60\text{--}120     \\
			14 & 6 & 3 & 165 &  627 & 33      & 77.00           & ?          & 99\text{--}165     \\
			15 & 6 & 3 & 220 & 1144 & 11      & 104.50          & ?          & 110\text{--}220    \\
			\bottomrule
		\end{tblr}
	\]
\end{table}

\begin{table}
	\caption{Computational investigation of $m_1$ for $t=4$.}\label{tbl:t4}
	\[
		\begin{tblr}{
			colspec = {rrr@{\hskip 10mm}rr@{\hskip 10mm}rrr@{\hskip 10mm}r},
		}
			\toprule
			n  & k & t & \#\text{pencil} & \#\text{paired} & \Delta & \text{LP} & \text{\texttt{sd}} & m_1(n,k,t) \\
			\midrule
			10 & 5 & 4 &  6 &   2 & 1      &  2.00           &            & 2          \\
			11 & 5 & 4 &  7 &   7 & 7      &  7.00           &            & 7          \\
			12 & 5 & 4 &  8 &  22 & 2      &  8.00           &            & 8          \\
			13 & 5 & 4 &  9 &  57 & 3      &  9.00           &            & 9          \\
			\midrule
			12 & 6 & 4 & 28 &  14 & 14     &  8.40           &            & 14         \\
			13 & 6 & 4 & 36 &  36 & 6      & 10.80           & ?          & 12\text{--}36      \\
			14 & 6 & 4 & 45 &  93 & 3      & 16.20           & ?          & 18\text{--}45      \\
			15 & 6 & 4 & 55 & 220 & 11     & 30.56           & ?          & 33\text{--}55      \\
			\bottomrule
		\end{tblr}
	\]
\end{table}

Based on these findings, we conclude this article with the following conjecture.
\begin{conjecture}\label{conj:pencil_paired_are_smallest}
	Let $n,k,t$ be non-negative integers with $k \leq n$ and $t\leq\min(k,n-k)$.
	Then
	\[
		m_1(n,k,t) =
		\begin{cases}
			2\cdot\displaystyle\binom{2k-t-1}{k} & \text{for }n=2k\text{ and }t\text{ even and }t \neq k\text{,} \\[5mm]
			\displaystyle\binom{n-t}{\min(k,n-k)-t} & \text{otherwise.}
		\end{cases}
	\]
\end{conjecture}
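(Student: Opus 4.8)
The plan is to prove the conjecture as a pair of matching bounds, the upper half of which is already available. The pencil bound gives $m_1(n,k,t)\le\binom{n-t}{\min(k,n-k)-t}$, and Corollary~\ref{cor:small_middle_layer_sets} gives $m_1(2k,k,t)\le 2\binom{2k-t-1}{k}$ for even $t\neq k$, the latter being strictly smaller by Theorem~\ref{thm:paired_vs_pencil}\ref{thm:paired_vs_pencil:improve_main}. Writing $M(n,k,t)$ for the conjectured value, the entire content is therefore the matching lower bound $m_1(n,k,t)\ge M(n,k,t)$. Using the duality $m_1(n,k,t)=m_1(n,n-k,t)$ (Fact~\ref{fct:dual_deg}) I would assume $2k\le n$, and dispose of the boundary regimes $t=0$, $t=1$, and $t=\min(k,n-k)$ directly from the degree-$0$ and degree-$1$ classifications (Fact~\ref{fct:classification_degree_1}) and from Lemma~\ref{lem:m1_ge_2}, where $M$ reduces to $\binom{n}{k}$, $\binom{n-1}{k-1}$, or $1$.

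For the remaining range $1<t<k$ I would set up an inductive lower-bound engine via derived and residual functions. If $f$ is a nonzero Boolean function of degree $\le t$, then for every $x\in V$ one has $\Der_x(f)=\vek 0\iff x\notin\bigcup\supp(f)$ and $\Res_x(f)=\vek 0\iff x\in\bigcap\supp(f)$. Hence, unless $\bigcap\supp(f)=\bigcup\supp(f)$ — which forces $\#f=1$, excluded by Lemma~\ref{lem:m1_ge_2} for $t<\min(k,n-k)$ — there is an $x$ for which both $\Der_x(f)$ and $\Res_x(f)$ are nonzero. Both are Boolean of degree $\le t$ by Fact~\ref{fct:der_res}, and $\#f=\#\Der_x(f)+\#\Res_x(f)$, so
\[
	m_1(n,k,t)\ge m_1(n-1,k-1,t)+m_1(n-1,k,t)\text{.}
\]
I would then induct on $n$. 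A direct computation with Pascal's rule, distinguishing the parity of $t$, shows that the identity $M(n,k,t)=M(n-1,k-1,t)+M(n-1,k,t)$ holds — so that the recursion is tight — in every case except the two diagonals $(n,k,t)=(2k,k,t)$ with $t$ odd and $(n,k,t)=(2k+1,k,t)$ with $t$ even. Off these diagonals the induction hypothesis and the recursion immediately yield $m_1(n,k,t)\ge M(n,k,t)$, matching the upper bound.

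The hard part is precisely these two diagonals, and this is where I expect the obstruction to sit. There the Pascal identity for $M$ breaks down and the recursion strictly undershoots the conjectured value: for instance it only delivers $m_1(12,6,3)\ge 56$ and $m_1(13,6,2)\ge 288$, against the conjectured $84=\binom{9}{3}$ and $330=\binom{11}{4}$. As the tables illustrate, neither the divisibility constraint $\Delta\mid m_1$ (Lemma~\ref{lem:m1_delta}) nor the Delsarte LP bound (Lemma~\ref{lem:lp_bound}) is tight in this regime either, so none of the elementary or spectral tools assembled here suffices. Closing the gap on the two diagonals therefore appears to require genuinely new input: either a strengthened semidefinite (Terwilliger-algebra) linear-programming bound that is sharp simultaneously on pencils and on paired functions, or — more plausibly — the full classification of minimal-size degree-$t$ sets announced in the forthcoming work, from which one would read off that every minimizer is a pencil, a dual pencil, or, in the middle-layer even-degree case, a paired function. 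I expect this classification step, restricted to the two diagonals, to be the principal difficulty, and it is exactly what keeps the statement a conjecture rather than a theorem.
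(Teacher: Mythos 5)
The first thing to say is that the statement you were given is Conjecture~\ref{conj:pencil_paired_are_smallest}: the paper does not prove it, and its supporting evidence is exactly what you reconstruct --- the pencil upper bound, the paired upper bound of Corollary~\ref{cor:small_middle_layer_sets} (strictly better when $n=2k$, $t$ even, $t\neq k$, by Theorem~\ref{thm:paired_vs_pencil}\ref{thm:paired_vs_pencil:improve_main}), and case-by-case lower bounds from Lemma~\ref{lem:m1_delta}, Lemma~\ref{lem:lp_bound}, and \texttt{solvediophant}, which leave visible gaps in Tables~\ref{tbl:t2}--\ref{tbl:t4}. So your framing (upper bound known, lower bound is the whole content) is correct, and your derived/residual recursion
\[
	m_1(n,k,t)\;\ge\; m_1(n-1,k-1,t)+m_1(n-1,k,t)
\]
is sound: for $t<\min(k,n-k)$, Lemma~\ref{lem:m1_ge_2} gives $\#f\ge 2$, hence some $x\in\bigcup\supp(f)\setminus\bigcap\supp(f)$ exists, and Fact~\ref{fct:der_res} together with $\#f=\#\Der_x(f)+\#\Res_x(f)$ applies. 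This recursion does not appear in the paper and is genuinely useful; for instance, combined with the computed value $m_1(11,5,3)=28$ from Table~\ref{tbl:t3}, it gives $m_1(12,6,3)\ge 56$, which beats the LP bound of $42$ reported there.

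The genuine error is your sentence claiming that off the two diagonals the induction ``immediately'' yields $m_1(n,k,t)\ge M(n,k,t)$. Your Pascal analysis of where $M(n,k,t)=M(n-1,k-1,t)+M(n-1,k,t)$ holds is correct, but tightness of that identity at $(n,k,t)$ is not enough: the induction also needs the matching lower bound at \emph{both} children, and the recursion trees of essentially all non-trivial cases pass through the bad diagonals, so the undershoot propagates upward and poisons everything above. Concretely, for $t$ odd, the pure-residual chain from any $(n,k,t)$ with $k>t$ and $n\ge 2k$ ends at $(2k,k,t)$, which lies on your first bad diagonal (e.g.\ $(13,5,3)$ recurses through $(12,4,3)$, $(11,4,3)$, \dots, down to $(8,4,3)$), so \emph{no} odd-$t$ case with $t<k$ is provable this way. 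For $t$ even, any case with $n\ge 2k+1$ and $k>t$ contains $(2k+1,k,t)$ in its residual chain, and the middle-layer case $(2k,k,t)$ with $k>t+1$ has its (dually folded) children $(2k-1,k-1,t)$ sitting on the second bad diagonal. What actually survives is only $t\le 1$, $t\ge\min(k,n-k)$, and the single family $(2k,k,k-1)$ with $k$ odd, i.e.\ Corollary~\ref{cor:m1=2}. Even your two illustrative bounds $m_1(12,6,3)\ge 56$ and $m_1(13,6,2)\ge 288$ are not consequences of the induction alone: they require the children's exact values, which in those instances are known only from the paper's computations. So the method is a legitimate lower-bound tool, but the region it leaves open is not ``two diagonals'' --- it is essentially the whole range $1<t<\min(k,n-k)$, which is precisely why the statement remains a conjecture.
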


\vspace{-3mm}

\begin{remark}
	In the range $2k \leq n$, we summarize all constructions known to us that attain the conjectured value of $m_1(n,k,t)$.
	\begin{itemize}
		\item For $n = 2k$ and $t$ even: $p^{(2k,k)}_{t+1,0}$.%
		\footnote{\label{footnote:lazy_p}
			The expression $p^{(2k,k)}_{t,0}$ is used for simplicity and should be read as $p^{(2k,k)}_{T,\emptyset}$ with $T \subseteq V$ and $\#I = t$.
			An analogous remark applies to similar symbols like $f^{(2k,k)}_{t,0}$.
		}
		\item For $n = 2k+1$ and $t$ even: $f^{(2k+1,k)}_{t,0}$ and $p^{(2k+1,k)}_{t+1,0}$.
		Note that, for $t = 0$, both constructions coincide.
		\item For $n = 2k$ and $t$ odd: $f^{(2k,k)}_{t,0}$ and its dual $f^{(2k,k)}_{0,t}$.
		Furthermore, the function $p^{(2k-1,k-1)}_{t,0} \cdot X_{2k}$ and its dual, which is $p^{(2k-1,k)}_{t,0}\cdot (1-X_{2k})$.
		For $t = 1$, the latter pair of functions coincides with the former.

		For $n=8$, $k=4$, and $t=3$, there is also the characteristic function of
		\[
			\{1234, 1235, 1245, 1678, 2678\}
		\]
		and its dual.%
		\footnote{
			For $k = t+1$ and larger values of $n$, we expect the existence of further examples.
		}
		\item In all other cases: Only the pencil $f^{(2k,k)}_{t,0}$.
	\end{itemize}
\end{remark}
\newpage

\section*{Acknowledgements}
The research of the authors was supported by the Research Network Coding Theory and Cryptography (W0.010.17N) of the Research Foundation – Flanders (FWO).

\printbibliography

\end{document}